\let\csname ver@amsthm.sty\endcsname\relax
\let\qedhere\relax
\numberwithin{equation}{section}
\newtheorem{thm}{Theorem}[section]
\newtheorem{lemma}[thm]{Lemma}
\newtheorem{cor}[thm]{Corollary}
\newtheorem{prop}[thm]{Proposition}
\newtheorem{conj}[thm]{Conjecture}
\newtheorem{problem}[thm]{Problem}
\newtheorem{Definition}[thm]{Definition}
\newtheorem{Example}[thm]{Example}
\newenvironment{example}
  {\begin{Example}\rm}{\hfill$\lozenge$\end{Example}}
\newtheorem{Remark}[thm]{Remark}
\newenvironment{remark}
  {\begin{Remark}\rm}{\hfill$\lozenge$\end{Remark}}
\crefname{thm}{Theorem}{Theorems}
\crefname{lemma}{Lemma}{Lemmas}
\crefname{cor}{Corollary}{Corollaries}
\crefname{prop}{Proposition}{Propositions}
\crefname{conj}{Conjecture}{Conjectures}
\crefname{question}{Question}{Questions}
\crefname{problem}{Problem}{Problems}
\crefname{definition}{Definition}{Definitions}
\crefname{example}{Example}{Examples}
\crefname{remark}{Remark}{Remarks}
\DeclareMathOperator{\row}{Row}
\DeclareMathOperator{\Rvac}{Rvac}
\DeclareMathOperator{\supp}{supp}
\DeclareMathOperator{\Flip}{Flip}
\DeclareMathOperator{\Krew}{Krew}
\DeclareMathOperator{\NC}{NC}
\DeclareMathOperator{\NN}{NN}
\DeclareMathOperator{\rk}{rk}
\DeclareMathOperator{\height}{ht}
\DeclareMathOperator{\Cat}{Cat}
\DeclareMathOperator{\Nar}{Nar}
\DeclareMathOperator{\GL}{GL}
\DeclareMathOperator{\ad}{ad}
\newcommand{\emailhref}[1]{\email{\href{#1}{#1}}}
\newcommand{\dfn}[1]{\textcolor{blue}{\emph{#1}}}
\title[Symmetry of Narayana numbers and rowvacuation of root posets]{Symmetry of Narayana numbers and rowvacuation of root posets}
\author[C. Defant]{Colin Defant}\emailhref{cdefant@princeton.edu}
\address{Department of Mathematics, Princeton University, Princeton, NJ 08544}
\thanks{C.D. was supported by a Fannie and John Hertz Foundation Fellowship and an NSF Graduate Research Fellowship. }
\author[S. Hopkins]{Sam Hopkins}\emailhref{shopkins@umn.edu}
\address{School of Mathematics, University of Minnesota, Minneapolis, MN 55455}
\thanks{S.H. was supported by an NSF Mathematical Sciences Postdoctoral Research Fellowship.}
\keywords{Coxeter--Catalan combinatorics, dynamical algebraic combinatorics, root posets, nonnesting partitions, noncrossing partitions, rowmotion, rowvacuation}
\date{\today}
\begin{document}

\begin{abstract}
For a Weyl group $W$ of rank $r$, the $W$-Catalan number is the number of antichains of the poset of positive roots, and the $W$-Narayana numbers refine the $W$-Catalan number by keeping track of the cardinalities of these antichains. The $W$-Narayana numbers are symmetric, i.e., the number of antichains of cardinality $k$ is the same as the number of cardinality $r-k$. However, this symmetry is far from obvious. Panyushev posed the problem of defining an involution on root poset antichains that exhibits the symmetry of the $W$-Narayana numbers. 

Rowmotion and rowvacuation are two related operators, defined as compositions of toggles, that give a dihedral action on the set of antichains of any ranked poset. Rowmotion acting on root posets has been the subject of a significant amount of research in the recent past. We prove that for the root posets of classical types, rowvacuation is Panyushev's desired involution.
\end{abstract}

\maketitle

\section{Introduction and statements of results} \label{sec:intro}

Let $\Phi$ be a crystallographic \dfn{root system} in an $r$-dimensional Euclidean space $V$, and $W\subseteq \GL(V)$ its corresponding \dfn{Weyl group}. \dfn{Coxeter--Catalan combinatorics} is the study of the \dfn{$W$-Catalan number}
\begin{equation} \label{eq:cat_prod}
 \Cat(W) \coloneqq \prod_{i=1}^{r} \frac{d_i+h}{d_i},
\end{equation}
where $d_1\leq \cdots\leq d_r$ are the \dfn{degrees} of $W$, and $h=d_r$ is its \dfn{Coxeter number}. Although not obvious, it is true that $\Cat(W)$ is an integer. It counts several collections of objects associated to~$\Phi$, including:
\begin{itemize}
\item \dfn{$W$-nonnesting partitions}, i.e., antichains of the poset $\Phi^+$ of positive roots;
\item \dfn{$W$-noncrossing partitions}, i.e., elements of $W$ between the identity $e$ and a fixed Coxeter element $c$ in absolute order.
\end{itemize}
We let $\NN(W)$ and $\NC(W)$ denote the sets of $W$-nonnesting partitions and $W$-noncrossing partitions, respectively. See, e.g.,~\cite[Chapter~1]{armstrong2009generalized} for a general introduction to Coxeter--Catalan combinatorics.

Though there has been a tremendous amount of work done in Coxeter--Catalan combinatorics in the past 20 plus years, the connection between the \dfn{nonnesting} and \dfn{noncrossing worlds} remains deeply mysterious. As an example of the divide between nonnesting and noncrossing, we note that there is a uniform proof of the product formula~\eqref{eq:cat_prod} for the nonnesting objects~\cite{ cellini2002adnilpotent, haiman1994conjectures}, but the only known proof of this formula for the noncrossing objects is case-by-case~\cite{bessis2003dual}. On the other hand, the noncrossing definition of the $W$-Catalan number extends directly to all finite Coxeter groups, while the naive generalization of the nonnesting definition fails to work properly in the non-crystallographic types.

Our present work focuses on another difference between the nonnesting and noncrossing worlds, this one concerning a refinement of the $W$-Catalan number. Namely, for $0\leq k \leq r$, the $k$th \dfn{$W$-Narayana number} $\Nar(W,k)$ can be defined as either:
\begin{itemize}
\item the number of nonnesting partitions in $\NN(W)$ of cardinality $k$;
\item the number of noncrossing partitions in $\NC(W)$ of rank $k$.
\end{itemize}
Evidently, the Narayana numbers refine the Catalan numbers in the sense that $\Cat(W)=\sum_{k=0}^{r}\Nar(W,k)$.

The property of the Narayana numbers that will most concern us here is their \dfn{symmetry}: 
\begin{equation} \label{eq:nar_sym}
\Nar(W,k)=\Nar(W,r-k),
\end{equation}
for all $0\leq k \leq r$. This symmetry is easily seen using the noncrossing definition of the Narayana numbers: it follows from the fact that the lattice of noncrossing partitions is self-dual. (A particular duality of $\NC(W)$, the \dfn{Kreweras complement}, will feature prominently in what follows.) However, this symmetry is far from obvious using the nonnesting definition of the Narayana numbers. 

In this paper, we will take up the following problem:

\begin{problem} \label{prob:main}
Explain the symmetry~\eqref{eq:nar_sym} of the nonnesting $W$-Narayana numbers.
\end{problem}

\Cref{prob:main} was mentioned, for instance, in~\cite[Remark~5.10]{fomin2007rootsystems}.

\medskip

Our approach to~\cref{prob:main} will build upon a program of Panyushev~\cite{panyushev2004adnilpotent, panyushev2009orbits}. In particular, we will study the following conjecture of Panyushev:

\begin{conj}[{Panyushev~\cite[Conjecture~6.1]{panyushev2004adnilpotent}}] \label{conj:pan_intro}
There is a ``natural'' involution $\mathfrak{P}\colon \NN(W)\to\NN(W)$ for which $\#A + \#\mathfrak{P}(A)=r$ for all $A \in \NN(W)$.
\end{conj}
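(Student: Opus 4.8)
The plan is to take $\mathfrak{P}$ to be \emph{rowvacuation}; this is legitimate because $\Phi^+$, with the grading given by $\height$, is a ranked poset. I would begin with the toggle formalism in the abstract. For a ranked poset $P$ of rank $n$, let $t_i$ be the composite of the toggles at all elements of rank $i$; these toggles commute, so each $t_i$ is a well-defined involution on the order ideals of $P$, equivalently on its antichains. Set $\row \coloneqq t_1 t_2\cdots t_n$, and define $\Rvac$ as the evacuation-type telescoping product of the $t_i$ built in analogy with Sch\"utzenberger's evacuation of linear extensions. The first task is the pair of structural identities $\Rvac^2 = \mathrm{id}$ and $\Rvac\,\row\,\Rvac = \row^{-1}$, so that $\langle\row,\Rvac\rangle$ acts dihedrally on $\NN(W)$. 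These follow from the commutations $t_it_j = t_jt_i$ for $|i-j|\geq 2$ together with the standard evacuation bookkeeping; they are type-independent, and in particular they already give that $\mathfrak{P}=\Rvac$ is a genuine involution of $\NN(W)$, which is half of what \Cref{conj:pan_intro} demands.

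The real content is the cardinality identity $\#A + \#\mathfrak{P}(A) = r$, and the abstract formalism does not suffice: there are ranked posets for which $\Rvac$ fails to reverse antichain cardinalities (for instance, any poset whose antichain-size distribution is not symmetric). So I would bring in the explicit structure of the classical root posets. The strategy is, in each classical type, to fix a model of $\NN(W)$ by Young-diagram-shaped order ideals (equivalently, lattice paths) in which antichain cardinality is a transparent statistic and $\Rvac$ admits a \emph{static} description. In type $A_{n-1}$, $\Phi^+$ is the triangular staircase poset, an antichain is the set of maximal elements --- the ``corners'' --- of a partition $\lambda$ contained in the staircase $(n-1,n-2,\dots,1)$, and I would show that $\Rvac$ acts by an explicit, visibly cardinality-reversing transformation of $\lambda$ (of ``complement inside the staircase'' type), reducing the identity to an elementary count of corners. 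Types $B_n$ and $C_n$ are treated the same way with the shifted staircase and a symmetric-path model, and type $D_n$ with the shifted staircase modified by its ``fork'' at the top.

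The step I expect to be the main obstacle is turning the recursive, toggle-by-toggle definition of $\Rvac$ into the static ``complement inside the shape'' description in these models, and then checking the resulting corner/peak count --- most delicately in type $D_n$, where $\Phi^+$ is not an honest shifted shape and the dictionary between its antichains and the chosen path family is the least transparent. I expect the cleanest route to be: first pin down $\row$ in each model (for classical types this is near to known computations on nonnesting partitions, e.g.\ of Panyushev and of Armstrong--Stump--Thomas), then identify $\Rvac$ as the unique involution in the dihedral group $\langle\row,\Rvac\rangle$ exhibiting a prescribed behavior on one well-chosen antichain, rather than composing the quadratically many toggles directly.

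Finally --- and this is the route that most directly engages \Cref{prob:main} rather than just \Cref{conj:pan_intro} --- I would develop in parallel a comparison with the noncrossing world: match $\Rvac$ on $\NN(W)$ with a suitable conjugate of the Kreweras complement $\Krew$ on $\NC(W)$, for which the rank identity $\rk(w)+\rk(\Krew^{\pm 1}(w)) = r$ is immediate from $\rk(c)=r$. The obstacle here is that the known nonnesting--noncrossing bijections witnessing the equality of Narayana numbers are themselves established case-by-case and do not visibly intertwine toggling with $\Krew$; reconciling the two pictures is exactly what confines the theorem to the classical types.
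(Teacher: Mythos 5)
Your general setup (rowvacuation as $\mathfrak{P}$, the dihedral identities $\Rvac^2=\mathrm{id}$ and $\Rvac\cdot\row\cdot\Rvac=\row^{-1}$, and the reduction of types A, B, C to a static description of $\Rvac$ --- in type A this is exactly the Lalanne--Kreweras-type formula of \cref{thm:lk}, with B/C obtained by folding) matches what is already known and what the paper leans on. But the entire new content of the result is type D, and there your plan has a genuine gap. First, $\Phi^+(D_n)$ is not a shifted-shape/lattice-path poset: the roots $e_i-e_n$ and $e_i+e_n$ are incomparable for every $i<n$, so the poset carries a whole doubled column of incomparable pairs (not just a ``fork at the top''), an antichain may contain both members of such a pair, and antichain cardinality is \emph{not} a corner/peak count of any path model in the transparent way you need. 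Second, no static ``complement inside the shape'' description of $\Rvac$ in type D is supplied or known --- the paper explicitly does not find one, and instead proves the identity by a quite different route: it establishes uniformly that the Armstrong--Stump--Thomas bijection $\Theta_W$ intertwines $\row^{-1}\cdot\Rvac$ with the involution $\Flip$ (\cref{thm:ast_intro}), then uses the explicit AST noncrossing-matching diagrams in types A and D to prove $\Rvac(\widehat{A})=\widehat{\Rvac(A)}$ for the folding/``hat'' map $\mathcal{A}(\Phi^+(D_n))\to\mathcal{A}(\Phi^+(A_{2n-3}))$, and finally runs a three-case analysis ($\delta(A)=A$; $\delta(A)\neq A$ with $\{\alpha_{n-1},\alpha_n\}\cap A=\varnothing$; $\alpha_{n-1}$ or $\alpha_n\in A$) with careful corrections coming from the long/short root sets $\mathcal{L}$, $\mathcal{S}$ (\cref{prop:typec_dist,prop:typeb_dist}). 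Your closing paragraph gestures at exactly this Kreweras-complement comparison but treats it as an optional parallel track and flags as an obstacle that known bijections ``do not visibly intertwine toggling with $\Krew$''; in fact the AST bijection does intertwine $\row$ with $\Krew$ by construction, and exploiting that is the mechanism that makes type D go through, not the path-model route you propose as primary.

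Two smaller points. Your suggested shortcut of identifying $\Rvac$ as ``the unique involution in $\langle\row,\Rvac\rangle$ with a prescribed behavior on one well-chosen antichain'' is not sound as stated: an involution $F$ with $F\cdot\row\cdot F=\row^{-1}$ need not lie in $\langle\row,\Rvac\rangle$ (only $F\cdot\Rvac$ commuting with $\row$ follows), so pinning $F=\Rvac$ requires agreement on a representative of every $\row$-orbit, which your plan does not provide. Finally, since \cref{conj:pan_intro} is stated for all Weyl groups, note that taking $\mathfrak{P}=\Rvac$ cannot prove it beyond the classical types (and $G_2$): the identity $\#A+\#\Rvac(A)=r$ fails in $F_4$, $E_6$, $E_7$, $E_8$ (\cref{rem:exceptional}), so any argument along these lines must be explicitly confined to the classical types, as the paper's \cref{thm:main_intro} is.
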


\begin{remark}
Let $\mathfrak{g}$ be the complex semisimple Lie algebra corresponding to the root system~$\Phi$, and let $\mathfrak{b}$ be the Borel subalgebra of $\mathfrak{g}$ corresponding to the choice of positive roots~$\Phi^+$. The nonnesting partitions $\NN(W)$ are in bijection with the \dfn{$\ad$-nilpotent ideals} in~$\mathfrak{b}$; under this bijection, the cardinality of the antichain becomes the minimal number of generators of the ideal. Hence, Panyushev described his conjectural~$\mathfrak{P}$ as a \dfn{duality} for $\ad$-nilpotent ideals of $\mathfrak{b}$, which sends an ideal with $k$ generators to one with $r-k$ generators.
\end{remark}

It is immediate from~\eqref{eq:nar_sym} that there is \emph{some} involution satisfying the condition in~\cref{conj:pan_intro}, so the word ``natural'' is doing all the work in this conjecture. Actually, Panyushev listed some specific desiderata for~$\mathfrak{P}$ that we will review later (see~\cref{conj:pan}). But, for instance, one thing we could want is that $\mathfrak{P}(A)$ is easily computable from $A$. Another thing we could want is that the definition of $\mathfrak{P}$ is purely ``poset-theoretic'' (only uses the poset structure of $\Phi^+$), since the definition of the Narayana numbers in terms of~$\NN(W)$ is poset-theoretic in this sense.

Panyushev was unable to define $\mathfrak{P}$ in general, but in~\cite{panyushev2004adnilpotent}, he was able to come up with a definition in Type~A. In fact, Panyushev's involution $\mathfrak{P}$ in Type~A is equivalent to the \dfn{Lalanne--Kreweras involution} on Dyck paths (see~\cite{hopkins2020birational}). A simple ``folding'' argument allows one to obtain the appropriate involution $\mathfrak{P}$ in Types~B/C from the one in Type~A, so Panyushev was also able to treat Types~B/C.

What's more, in a follow-up paper, Panyushev~\cite{panyushev2009orbits} conjectured a way to do something ``close'' to defining $\mathfrak{P}$ for all root systems. In that paper, he considered the \dfn{rowmotion} operator $\row \colon \NN(W)\to\NN(W)$ acting on nonnesting partitions, and he conjectured that it has very good behavior. Specifically, he conjectured that:
\begin{itemize}
\item $\row ^{h}$ is the involutive poset automorphism $-w_0\colon \Phi^+\to\Phi^+$, where $w_0\in W$ is the longest element (hence, $\row ^{2h}$ is the identity);
\item the average cardinality of the antichains in any $\row $-orbit is $\frac{r}{2}$.
\end{itemize}
This rowmotion conjecture is ``close'' to \cref{conj:pan_intro} because it says that $\NN(W)$ can be partitioned into blocks of sizes dividing $2h$ such that the average cardinality in each block is $\frac{r}{2}$, whereas \cref{conj:pan_intro} says that $\NN(W)$ can be partitioned into blocks of sizes dividing $2$ such that the average cardinality in each block is $\frac{r}{2}$. Panyushev's rowmotion conjecture was proved by Armstrong, Stump, and Thomas~\cite{armstrong2013uniform}.

Although Panyushev was not the first to consider rowmotion (which is defined more generally as acting on the antichains of any poset), his investigation of rowmotion on root posets rekindled interest in this operator. Indeed, the past 10 or so years has seen the emergence of the subfield of \dfn{dynamical algebraic combinatorics}~\cite{roby2016dynamical, striker2017dynamical}, in which rowmotion features prominently. Furthermore, Panyushev's ``constant average cardinality along orbits'' observation was one of the first instances of \dfn{homomesy}~\cite{propp2015homomesy}, a phenomenon that again is at the heart of dynamical algebraic combinatorics.

\medskip

In the present paper, we demonstrate how ideas from dynamical algebraic combinatorics can be used to address (more of)~\cref{conj:pan_intro}.

As shown by Cameron and Fon-der-Flaass~\cite{cameron1995orbits} (see also Joseph~\cite{joseph2019antichain}), rowmotion acting on the antichains of any poset can be written as a composition of \dfn{toggles}, which are certain simple involutions that add elements to or remove elements from subsets. For any ranked poset, there is another canonical composition of toggles that gives a related involutive operator called \dfn{rowvacuation}. The names ``rowmotion'' and ``rowvacuation'' come from Sch\"{u}tzenberger's promotion and evacuation operators on linear extensions. Rowmotion and rowvacuation share many properties with promotion and evacuation: for instance, they generate a dihedral group action.

Rowvacuation is easy to compute (because it is a composition of toggles), and its definition is inherently poset-theoretic. Our main result is the following:

\begin{thm} \label{thm:main_intro}
If $\Phi$ is one of the classical types~A, B, C, or~D, then the rowvacuation operator $\Rvac\colon \NN(W)\to\NN(W)$ is Panyushev's conjectured involution $\mathfrak{P}$ from~\cref{conj:pan_intro}.
\end{thm}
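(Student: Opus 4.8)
The plan is to verify the theorem type-by-type among the classical types, reducing as much as possible to the well-understood Type~A case via folding, and to use the dihedral-action structure relating rowmotion and rowvacuation together with the uniform result of Armstrong--Stump--Thomas on $\row$ acting on root poset antichains. First I would record the basic structural facts: rowvacuation $\Rvac$ is an involution on $\NN(W)$, and it is conjugate in the dihedral group $\langle \row, \Rvac\rangle$ to the ``reflection'' half of the dihedral action, so that $\Rvac$ and $\row$ satisfy the braid-type relation $\Rvac \circ \row \circ \Rvac = \row^{-1}$; moreover $\Rvac = \row^k \circ (\text{initial rowvacuation})$ for appropriate $k$. Combined with Panyushev's (now proven) fact that $\row^h = -w_0$ on $\Phi^+$, this pins down $\Rvac$ as an explicit element of the dihedral group of order $2h$ acting on $\NN(W)$ in each type; the remaining content is to show this particular operator complements antichain cardinalities, i.e.\ $\#A + \#\Rvac(A) = r$.

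Next I would treat Type~A directly. Here $\NN(W)$ is identified with antichains in the staircase poset, equivalently with Dyck paths, and Panyushev's involution $\mathfrak{P}$ is the Lalanne--Kreweras involution. The key step is to prove that $\Rvac$, computed as the prescribed composition of toggles on the staircase poset, coincides with the Lalanne--Kreweras involution. I expect this to follow by a careful combinatorial translation: express the toggle composition defining $\Rvac$ in terms of the Dyck path / lattice path model, and match it against the known description of the Lalanne--Kreweras involution in terms of the ``valleys'' and ``peaks'' statistics (or the two lattice-path boundary sequences). The cardinality-complementation property $\#A + \#\Rvac(A) = r$ is then automatic since it is a known property of the Lalanne--Kreweras involution. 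For Types~B and~C, I would invoke the folding argument: the Type~$B_r$/$C_r$ root poset is the fixed-point subposet of the Type~$A_{2r-1}$ root poset under the flip automorphism, the flip commutes with the toggle structure in a way compatible with rowvacuation, and hence $\Rvac$ on $\NN(B_r)$ is the restriction of $\Rvac$ on $\NN(A_{2r-1})$; Panyushev's $\mathfrak{P}$ in Types~B/C was defined by exactly this folding from Type~A, so the Type~A result transfers. The main obstacle is Type~D.

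For Type~D I would proceed more hands-on. There is no folding of $D_r$ from a smaller classical type that interacts cleanly with rowvacuation, so I expect to need an explicit combinatorial model for $\NN(D_r)$ (e.g.\ as certain pairs of noncrossing-type arc diagrams, or via the realization of the $D_r$ root poset with its almost-staircase shape), compute $\Rvac$ in that model via the toggle definition, and verify directly both that it is cardinality-complementing and that it satisfies whichever of Panyushev's desiderata are needed to identify it with $\mathfrak{P}$ (referencing the precise list in~\cref{conj:pan}). A plausible route is to embed the $D_r$ picture into a Type~$A$-like picture in which most toggles behave as in Type~A, so that the $D_r$ rowvacuation differs from a Type~A rowvacuation only near the ``fork'' at the bottom of the root poset, and then handle that local discrepancy by a finite case analysis. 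The hard part will be controlling this boundary behavior: the bottom of the $D_r$ root poset has two minimal elements rather than one, the involution $-w_0$ is nontrivial there when $r$ is odd, and keeping track of how the toggle order interacts with this branching — while simultaneously proving the cardinality identity — is where the real work lies.
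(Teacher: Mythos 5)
Your treatment of Types~A, B, and~C is essentially the paper's: Type~A is the identification of $\Rvac$ with the Lalanne--Kreweras involution (already known, and cited in the paper), and B/C follow by folding from $A_{2r-1}$ because rowvacuation commutes with poset automorphisms. The gap is Type~D, which is the entire substance of the theorem. Your plan there---embed $\Phi^+(D_r)$ into a ``Type~A-like picture'' in which most toggles agree and fix a ``local discrepancy near the fork'' by finite case analysis---does not work as described. Rowvacuation is a composition of roughly $\binom{h}{2}$ rank toggles, and a structural difference in the bottom ranks propagates through the entire composition, so there is no sense in which the discrepancy stays local; no direct toggle-level comparison between $\Phi^+(D_n)$ and a Type~A poset is carried out (or, as far as anyone knows, feasible). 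Incidentally, your assertion that no folding of $D_r$ interacts cleanly with rowvacuation is off the mark: the quotient $\gamma\colon\Phi^+(D_n)\to\Phi^+(C_{n-1})$ interacts perfectly with $\Rvac$ on the $\delta$-symmetric antichains, precisely because $\Rvac$ commutes with the automorphism $\delta$, and this handles one of the three cases. (Also, the $D_n$ root poset has $n$ minimal elements, not two; the relevant feature is the pair of fork simple roots $\alpha_{n-1},\alpha_n$ and the automorphism $\delta$.)

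The missing idea is the transfer to the noncrossing side. The paper first proves, uniformly from the defining properties of the Armstrong--Stump--Thomas bijection $\Theta_W$ (\cref{thm:ast}), that $\Theta_W$ intertwines $\row^{-1}\cdot\Rvac$ with the conjugation involution $\Flip$ (\cref{thm:ast_intro}); in the explicit matching models of classical types this says $\row\cdot\Rvac$ acts by reflecting the noncrossing diagram across a diameter (\cref{lem:ReflectionA}). This is what makes it possible to prove the key structural fact that the ``doubling'' map $A\mapsto\widehat{A}$ into $\mathcal{A}(\Phi^+(A_{2n-3}))$ commutes with rowvacuation when $\delta(A)\neq A$ and $\{\alpha_{n-1},\alpha_n\}\cap A=\varnothing$ (\cref{lem:HatCommutes}, \cref{cor:HatCommutes}); there is no apparent way to see this at the level of toggles. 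The cardinality identity then follows from a three-case split---$\delta(A)=A$ via folding to $C_{n-1}$, the generic case via $\widehat{A}$ and the Type~A result, and $\{\alpha_{n-1},\alpha_n\}\cap A\neq\varnothing$ via parabolic induction (\cref{prop:rvac_parabolic})---together with careful bookkeeping of the elements of $\mathcal{L}$ and $\mathcal{S}$ (\cref{prop:typec_dist,prop:typeb_dist}). None of this machinery, nor any workable substitute for it, appears in your proposal, so as it stands the Type~D case is unproven.
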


For $\Phi=G_2$, it is easy to see there is a unique choice of $\mathfrak{P}$, which in fact agrees with rowvacuation. Unfortunately, for the other exceptional types $E_6$, $E_7$, $E_8$, and~$F_4$, there are antichains $A \in \NN(W)$ with $\#A+\#\Rvac(A)\neq r$ (see~\cref{rem:exceptional}), so we are unable to resolve~\cref{conj:pan_intro} in these exceptional types. 

It was recently shown by the second author and Joseph~\cite{hopkins2020birational} that the Lalanne--Kreweras involution is rowvacuation for the root posets of Type A. Together with Panyushev's prior work from~\cite{panyushev2004adnilpotent}, this proves~\cref{thm:main_intro} for Types~A, B, and~C. So the only case we have to address here is Type D.\footnote{But note that Type D is usually the hardest: for instance, the original definition of noncrossing partitions in Type D was ``incorrect''~\cite{athanasiadis2004noncrossing, bessis2003dual, reiner1997noncrossing}.} However, along the way, we prove results concerning rowvacuation of $\NN(W)$ for arbitrary $\Phi$. 

More precisely, we extend some results of Armstrong--Stump--Thomas~\cite{armstrong2013uniform}. In order to resolve Panyushev's rowmotion conjecture, as well as a related conjecture of Bessis and Reiner~\cite{bessis2011cyclic}, they constructed a bijection $\Theta_W\colon \NN(W)\to \NC(W)$ between the nonnesting and noncrossing partitions that is uniquely specified by a handful of properties (see~\cref{thm:ast}). The most important of these properties is that 
\[\Theta_W \cdot \row= \Krew\cdot \Theta_W,\] where $\Krew\colon \NC(W)\to\NC(W)$ denotes the \dfn{Kreweras complement} defined by $\Krew(w) \coloneqq cw^{-1}$. We show:

\begin{thm} \label{thm:ast_intro}
For the bijection~$\Theta_W\colon \NN(W)\to \NC(W)$ from~\cite{armstrong2013uniform}, we have 
\[\Theta_W \cdot (\row^{-1} \cdot \Rvac) = \Flip\cdot \Theta_W,\] 
where $\Flip\colon \NC(W)\to\NC(W)$ is the involutive poset automorphism defined by $\Flip(w) \coloneqq gw^{-1}g^{-1}$ (for the appropriate involution $g\in W$ depending on $c$).
\end{thm}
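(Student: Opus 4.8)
The plan is to leverage the Armstrong--Stump--Thomas bijection $\Theta_W$ and its characterizing property $\Theta_W\cdot\row = \Krew\cdot\Theta_W$ to transport the statement into the noncrossing world, where the Kreweras complement makes everything computable. Since rowvacuation and rowmotion generate a dihedral action on $\NN(W)$, the composite $\row^{-1}\cdot\Rvac$ is an involution; similarly, $\Krew$ generates a dihedral action on $\NC(W)$, and there one has the standard identity that $\Krew^{2}$ equals conjugation by $c$ (or a related Coxeter element), while the ``reflection'' elements of the dihedral group are of the form $w\mapsto gw^{-1}g^{-1}$ for suitable $g\in W$. So the rough shape of the argument is: push $\row^{-1}\cdot\Rvac$ through $\Theta_W$, identify the image as one of the reflections in the dihedral group generated by $\Krew$, and check it is the specific map $\Flip$.

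First I would recall (or establish) the precise relationship between $\Rvac$ and $\row$ at the level of toggle compositions: rowvacuation on a ranked poset of ``height'' related to $h$ can be written as $\Rvac = \row^{N}\cdot(\text{reversal})$ or, more usefully here, the dihedral group $\langle \row, \Rvac\rangle$ acting on $\NN(W)$ satisfies $\Rvac\cdot\row\cdot\Rvac = \row^{-1}$, so that $\row^{-1}\cdot\Rvac$ is an involution conjugating $\row$ to $\row^{-1}$. On the noncrossing side I would use the known facts about $\Krew$: it is not an involution, but $\Krew^{2}(w) = cwc^{-1}$, and the involutions in the dihedral group $\langle \Krew\rangle$ (completed appropriately) are exactly the maps $w\mapsto g w^{-1} g^{-1}$ where $g$ ranges over a coset determined by $c$. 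Under $\Theta_W$, the relation $\Theta_W\cdot\row = \Krew\cdot\Theta_W$ forces $\Theta_W\cdot(\row^{-1}\cdot\Rvac)\cdot\Theta_W^{-1}$ to be an involution conjugating $\Krew$ to $\Krew^{-1}$; any such involution has the form $w\mapsto g w^{-1} g^{-1}$. The remaining task is to pin down which $g$, and verify it is the one defining $\Flip$.

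The main obstacle I expect is this last identification of the element $g$. Knowing that $\Theta_W\cdot(\row^{-1}\cdot\Rvac)\cdot\Theta_W^{-1}$ is \emph{some} reflection in the dihedral group tells us it is $w\mapsto g w^{-1} g^{-1}$ for \emph{some} $g$, but to show $g$ is exactly the involution appearing in the definition of $\Flip$ one must track a base point through the whole construction. I would do this by using the characterizing properties of $\Theta_W$ from \cref{thm:ast} beyond just the intertwining with $\row$: in particular, $\Theta_W$ sends the empty antichain to the identity $e$ and is equivariant with respect to the poset automorphism $-w_0$ on $\NN(W)$ (which, via $\Theta_W\cdot\row^{h} = \row^{h}\cdot(-w_0) $-type compatibilities) corresponds on the noncrossing side to conjugation by an appropriate involution. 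Evaluating $\row^{-1}\cdot\Rvac$ on the empty antichain (or on another distinguished antichain whose image under $\Theta_W$ is known) computes $g$ directly: $\Rvac$ fixes or moves distinguished antichains in a controlled way because it is a composition of toggles, so one computes the image and reads off $g w^{-1} g^{-1}$ for that particular $w$, solving for $g$. Matching this against the definition of $\Flip$ completes the proof.

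An alternative, more hands-on route that may be needed if the abstract dihedral-group argument does not by itself locate $g$: establish the identity $\row^{-1}\cdot\Rvac = \Rvac_0\cdot(-w_0)$ or a similar factorization on the nonnesting side, where $\Rvac_0$ is a ``canonical'' rowvacuation at the level of toggles, then transport each factor separately through $\Theta_W$ using, respectively, the $\Krew$-intertwiner and the known image of the $-w_0$ action (which is conjugation by the involution $g$). This reduces \cref{thm:ast_intro} to the conjunction of two cleaner equivariance statements, each of which follows from the uniqueness characterization of $\Theta_W$ by checking it is respected by both sides. Either way, the crystallographic-type case analysis should be avoidable: all the inputs ($\Theta_W$'s properties, the toggle description of $\Rvac$, the dihedral relations for $\Krew$) are uniform, so I would aim for a type-independent proof of \cref{thm:ast_intro}, with the type-by-type work deferred to deducing \cref{thm:main_intro} (specifically the cardinality-complementation $\#A + \#\Rvac(A) = r$, which is where the classical types enter).
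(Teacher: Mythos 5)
There is a genuine gap at the step where you pass from ``$F:=\Theta_W\cdot(\row^{-1}\cdot\Rvac)\cdot\Theta_W^{-1}$ is an involution conjugating $\Krew$ to $\Krew^{-1}$'' to ``$F$ has the form $w\mapsto gw^{-1}g^{-1}$, and evaluating at a base point identifies $g$.'' The relations $F^2=\mathrm{id}$ and $F\cdot\Krew=\Krew^{-1}\cdot F$ do \emph{not} force $F$ to lie in the dihedral group generated by $\Krew$ and $\Flip$, nor to be of the form $w\mapsto gw^{-1}g^{-1}$: any bijection satisfying these relations is determined only up to its values on one representative of each $\Krew$-orbit, and the set of such bijections is a coset of the (large) centralizer of $\Krew$ in the symmetric group on $\NC(W,c)$. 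Consequently, computing $\row^{-1}\cdot\Rvac$ on the single antichain $\{\alpha_i\colon i\in L\}$ (whose image under $\Theta_W$ is $e$) pins down $F$ only on the $\Krew$-orbit of $e$; it says nothing about the other orbits, so the proposed base-point argument cannot ``solve for $g$'' globally. Your alternative factorization $\row^{-1}\cdot\Rvac=\Rvac_0\cdot(-w_0)$ is not established and faces the same issue: equivariance statements with respect to $\row$ and $-w_0=\row^h$ still only constrain $F$ orbitwise.

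What is missing is precisely the ingredient the paper uses to reach every orbit: compatibility of $\row^{-1}\cdot\Rvac$ with parabolic induction. The paper proves (\cref{lem:par_ind}, valid for any ranked poset) that $\row^{-1}\cdot\Rvac$ preserves $\supp(A)$ and agrees with the same operator computed on the subposet $P_{\supp(A)}$; combined with the AST fact that every $\row$-orbit contains an antichain with proper support, this lets one verify $\Theta_W\cdot(\row^{-1}\cdot\Rvac)=\Flip\cdot\Theta_W$ by induction along the recursive computation of $\Theta_W$ from \cref{thm:ast}: a base case, a $\Krew$-step using $\Rvac\cdot\row=\row^{-1}\cdot\Rvac$ and $\Flip\cdot\Krew=\Krew^{-1}\cdot\Flip$ (these parts you do have), and a parabolic-induction step in which one checks that $\prod_{i\in L\setminus\supp(A)}s_i\cdot\Flip_{W_{\supp(A)}}\bigl(\prod_{i\in L\setminus\supp(A)}s_i\cdot\Theta_W(A)\bigr)=c_L\Theta_W(A)^{-1}c_L=\Flip(\Theta_W(A))$. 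Without this third step (or some substitute that controls $F$ on every orbit), your argument does not close.
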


Note that we prove \cref{thm:ast_intro} just by appealing to the general properties satisfied by~$\Theta_W$, and not via any case-by-case reasoning. Nevertheless, a lot of intricate combinatorial reasoning particular to the classical types does go into our proof of~\cref{thm:main_intro}. For example, in these classical types, there are models of noncrossing partitions where $\Krew$ corresponds to rotation; we show that $\Flip$ corresponds to reflection across a diameter in these models (hence the name).

\medskip

\begin{remark} 
Before we end this introduction, let us briefly discuss an alternate approach to~\cref{prob:main} that we will not pursue. This alternate approach concerns an even further refinement of the Catalan numbers. Let $\Pi(\Phi)$ denote the lattice of flats of the Coxeter arrangement of $\Phi$. There are natural embeddings
\begin{align*}
\iota_{\NN}\colon \NN(W) &\hookrightarrow \Pi(\Phi);\\
\iota_{\NC}\colon \NC(W) &\hookrightarrow \Pi(\Phi).
\end{align*} 
Define the \dfn{type}\footnote{This notion of type is unrelated to the Cartan--Killing type of the root system~$\Phi$. This clash in terminology is unfortunate but standard.} of $A\in \NN(W)$ to be the $W$-orbit of $\iota_{\NN}(A)$, and define the type of $w \in \NC(W)$ similarly. It is known~\cite[Theorem~6.3]{athanasiadis2004noncrossing} that for any $X \in \Pi(\Phi)/W$, the number of nonnesting partitions of type~$X$ is the same as the number of noncrossing partitions of type~$X$. These numbers are called the \dfn{$W$-Kreweras numbers}.\footnote{We apologize for the number of different (and often unrelated) mathematical concepts in our paper named after Kreweras. Although, to be fair, it is mostly he who is to blame.} Moreover, the codimension of $\iota_{\NN}(A)$ is the same as the cardinality of $A \in \NN(W)$, and the codimension of $\iota_{\NC}(w)$ is the same as the rank of $w \in \NC(W)$. Since, as mentioned above, the symmetry of the Narayana numbers is easy to see in the noncrossing world, a type-preserving bijection between nonnesting and noncrossing partitions would, therefore, constitute a solution to~\cref{prob:main}. There are a handful of type-preserving bijections between the nonnesting and noncrossing partitions in the literature~\cite{kim2011new, fink2009bijections}; but these constructions are all ad-hoc and, thus, unsatisfactory in some sense. No uniform type-preserving bijection is known.
\end{remark}

\medskip

The rest of the paper is structured as follows. In \cref{sec:posets}, we review necessary background concerning root posets, rowmotion and rowvacuation, and Panyushev's~\cref{conj:pan_intro}. In \cref{sec:Flip}, we review noncrossing partitions, the Kreweras complement, and the Armstrong--Stump--Thomas bijection. We also introduce the $\Flip$ operator and prove~\cref{thm:ast_intro} there. Armstrong, Stump, and Thomas gave explicit combinatorial descriptions of their bijections in classical types. We review their description in Type~A in \cref{Sec:ASTBijectionA} and review parts of their description in Type~D in \cref{Sec:ASTBijectionD}. These sections also establish some lemmas, concerning how rowvacuation interacts with these combinatorial constructions, that will be needed in the proof of \cref{thm:main_intro}. In \cref{Sec:MainProof}, we tie up the remaining loose ends and prove that rowvacuation serves as Panyushev's $\mathfrak{P}$ in Type~D, thus completing the proof of~\cref{thm:main_intro}. 

\medskip

\noindent {\bf Acknowledgments}: We thank the organizers of the 2020 BIRS Online Workshop on dynamical algebraic combinatorics for giving us the opportunity to collaborate on this project. We thank Theo Douvropoulos, Michael Joseph, Hugh Thomas, and Nathan Williams for useful discussion during that workshop. S.H.~gives a special thanks to Nathan Williams for suggesting the correct involution $g$ to use in the definition of $\Flip$ for root systems of arbitrary type. 

\section{Posets}\label{sec:posets}

\subsection{Root posets} \label{subsec:RootPosets}

We assume the reader is familiar with the basics concerning posets as laid out, for instance, in~\cite[Chapter~3]{stanley2011ec1}. All posets we consider in this paper are finite, and we drop this adjective from now on. We say a poset $P$ is \dfn{ranked} if there exists a \dfn{rank function} $\rk\colon P \to \mathbb{N}$ for which
\begin{itemize}
\item $\rk(p)=0$ for all minimal elements of $P$;
\item $\rk(y)=\rk(x)+1$ if $x,y\in P$ are such that $x \lessdot y$.
\end{itemize}
A rank function is unique if it exists. The \dfn{rank} $\rk(P)$ of a ranked poset $P$ is the maximum value of its rank function. From now on, all posets we consider will be ranked. For $i\in \mathbb{N}$, we use the notation
\[P_i\coloneqq \{p\in P\colon \rk(p)=i\}.\]
Note that $P_i$ is empty unless $0\leq i \leq \rk(P)$. The nonempty $P_i$ are called the \dfn{ranks} of $P$. We use $\mathcal{A}(P)$ to denote the set of \dfn{antichains} of $P$. Evidently, $P_i\in \mathcal{A}(P)$ for all $i \in \mathbb{N}$.

As in \cref{sec:intro}, let $\Phi$ be a crystallographic root system in an $r$-dimensional vector space $V$ with Weyl group $W\subseteq\GL(V)$. For basic background on root systems, see, e.g.,~\cite{bourbaki2002lie}. We assume we have chosen a system of \dfn{simple roots} $\{\alpha_1,\ldots,\alpha_r\}$ and, hence, a corresponding set~$\Phi^+$ of \dfn{positive roots} (those that are nonnegative linear combinations of the simple roots). The \dfn{root poset} of $\Phi$ is the partial order on $\Phi^+$ whereby $\alpha\leq \beta$ if $\beta-\alpha$ is a nonnegative sum of simple roots. By abuse of notation, we use $\Phi^+$ to denote the root poset; if $\Phi$ has type $X$, we also use $\Phi^+(X)$ to denote this poset. The root poset is ranked with the rank function being $\rk(\alpha)=\height(\alpha)-1$, where the \dfn{height} of a positive root $\alpha = \sum_{i}a_i\alpha_i \in \Phi^+$ is $\height(\alpha)\coloneqq \sum_{i}a_i$. The minimal elements of $\Phi^+$ are the simple roots. If $\Phi$ is irreducible, then it has a unique maximal element called the \dfn{highest root}.  Again, if $\Phi$ is irreducible, then the rank of $\Phi^+$ is $\rk(\Phi^+)=h-2$, where we recall that $h$ denotes the Coxeter number of~$W$. (In fact, \emph{all} the degrees of $W$ can similarly be read off from the sizes of ranks of $\Phi^+$.) The poset~$\Phi^+$ has a canonical involutive automorphism $\alpha\mapsto -w_0(\alpha)$, where $w_0\in W$ is the \dfn{longest element}.

As discussed in~\cref{sec:intro}, we call the antichains of $\Phi^+$ the \dfn{$W$-nonnesting partitions}. This name is due to Postnikov, who first suggested studying them as Catalan objects (see~\cite[Remark~2]{reiner1997noncrossing}).  In \cref{sec:intro}, we denoted this set of antichains by~$\NN(W)$, but from now on, we will use the notation $\mathcal{A}(\Phi^+)$.

We now describe specific realizations of the root posets of classical-type root systems. In anticipation of our detailed analysis of these posets, we will also highlight connections between them and define some subsets $\mathcal{L},\mathcal{S} \subseteq \Phi^+$ of particular interest. We use the standard notation $[i,j] \coloneqq \{i,i+1,\ldots,j\}$ for intervals, and we also use~$[n]\coloneqq [1,n]$ and $-[n]\coloneqq [-n,-1]$. For $1\leq i \leq n$, let $e_i$ denote the $i^\text{th}$ standard basis vector in~$\mathbb R^n$. 

The elements of the root poset $\Phi^+(A_{n-1})$ are $e_i-e_j$ for $1\leq i < j \leq n$. The simple roots $\alpha_1,\ldots,\alpha_{n-1}$ are $\alpha_i=e_i-e_{i+1}$. We identify $\Phi^+(A_{n-1})$ with the set of intervals $\{[i,j]\subseteq [n]\colon 1 \leq i < j \leq n\}$ ordered by containment, where the root $e_i-e_j$ corresponds to the interval $[i,j]$. \Cref{fig:typeAposet} presents the Hasse diagram of $\Phi^+(A_9)$. The poset~$\Phi^+(A_{n-1})$ has an involutive automorphism $\eta:\Phi^+(A_{n-1})\to\Phi^+(A_{n-1})$ given by $\eta([i,j])=[n+1-j,n+1-i]$, which, simply put, is reflection of the Hasse diagram across the central vertical axis. In fact, $\eta=-w_0$. We denote the set of elements of $\Phi^+(A_{n-1})$ that are fixed by $\eta$ by $\mathcal L_{A_{n-1}}\coloneqq \{[i,n+1-i]\colon1\leq i\leq \left\lfloor n/2\right\rfloor\}$.

\begin{figure}
\begin{center}
{\includegraphics[height=3.724cm]{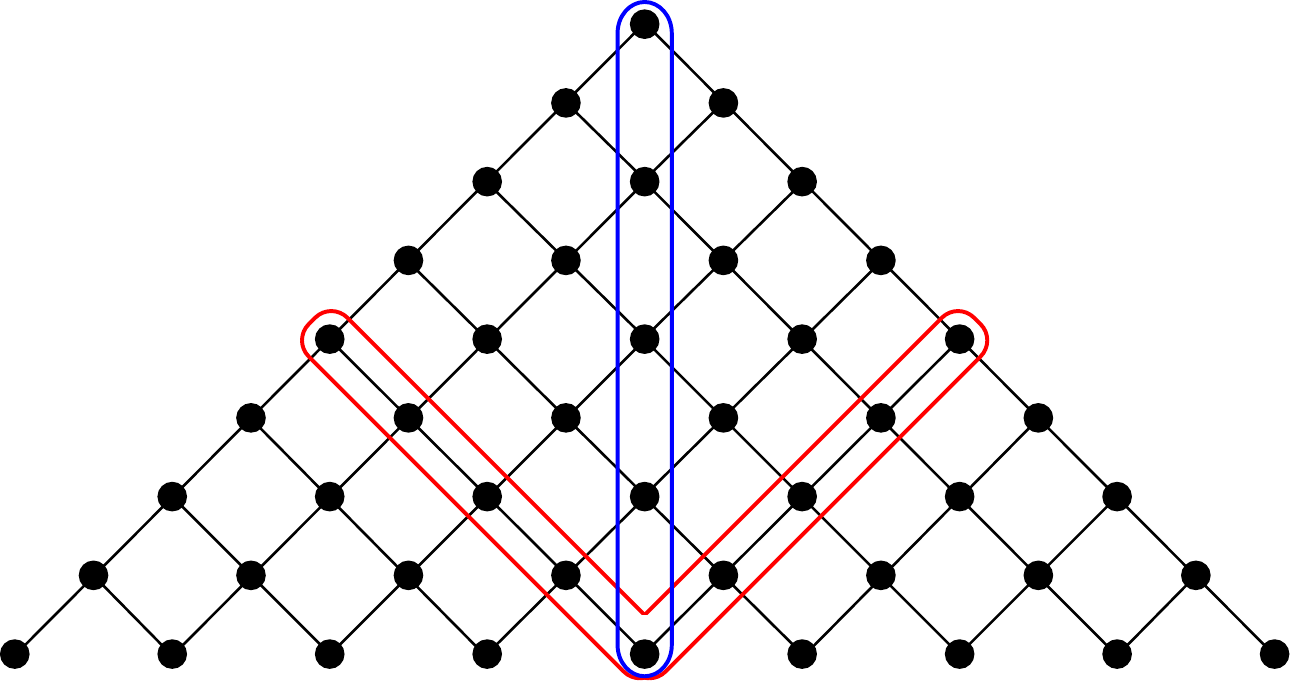}}
\end{center}
\caption{The root poset $\Phi^+(A_9)$. The minimal elements are the simple roots, which can be identified with the intervals $[1,2],[2,3],\ldots,[9,10]$ (ordered as they appear from left to right in the Hasse diagram). The set $\mathcal L_{A_{9}}$ consists of the $5$ elements circled in blue. The set $\mathcal{S}_{A_{9}}$ consists of the $9$ elements circled in red.}\label{fig:typeAposet}
\end{figure}

The root posets of Types~B and~C are isomorphic, so we will focus our attention on Type~C. The elements of $\Phi^+(C_n)$ are $e_i\pm e_j$ for $1\leq i<j\leq n$, together with~$2e_i$ for $1\leq i \leq n$. The simple roots $\alpha_1,\ldots,\alpha_n$ are \[ \alpha_i = \begin{cases} e_i - e_{i+1} &\textrm{if $1\leq i \leq n-1$}; \\
2e_n &\textrm{if $i=n$}.\end{cases}\]  The poset~$\Phi^+(C_n)$ can be realized as the quotient of $\Phi^+(A_{2n-1})$ by the action of $\eta$. In other words, the Hasse diagram of $\Phi^+(C_n)$ is obtained by ``folding''~$\Phi^+(A_{2n-1})$ along its central vertical axis. \Cref{fig:typeCposet} presents the Hasse diagram of $\Phi^+(C_5)$. There is a natural injection $\iota\colon \mathcal P(\Phi^+(C_n))\to\mathcal P(\Phi^+(A_{2n-1}))$ obtained by ``unfolding,'' where~$\mathcal{P}(X)$ denotes the power set of $X$. Thus, the image of $\iota$ is the collection of subsets of $\Phi^+(A_{2n-1})$ that are symmetric about the central vertical axis. We denote by $\mathcal L_{C_n}$ the set of long roots in $\Phi^+(C_n)$, which correspond to the singleton $\eta$-orbits in $\Phi^+(A_{2n-1})$ (i.e., the elements lying on the ``crease of the fold''). Equivalently, we have~$\mathcal L_{C_n}=\iota^{-1}(\mathcal L_{A_{2n-1}})$. 

\begin{figure}
\begin{center}
{\includegraphics[height=3.724cm]{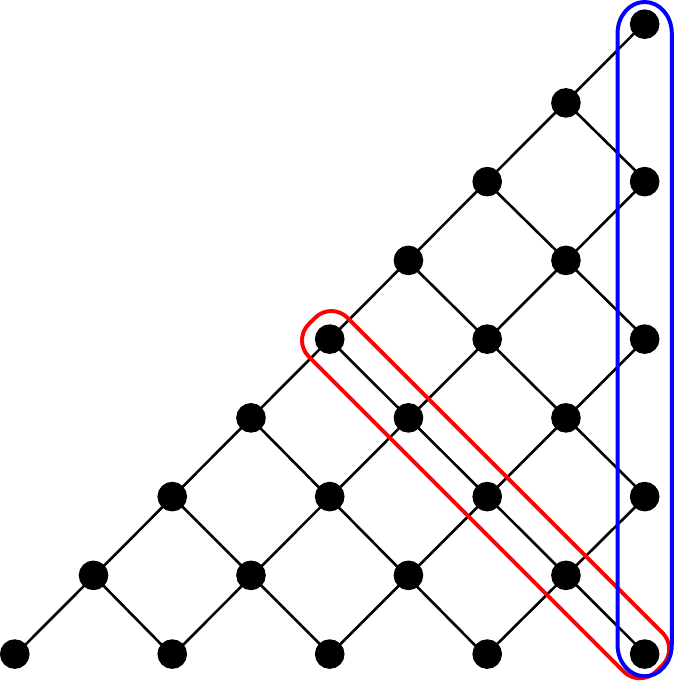}}
\end{center}
\caption{The root poset $\Phi^+(C_5)$. The set $\mathcal L_{C_5}$ consists of the $5$ elements circled in blue. The set $\mathcal{S}_{C_5}$ consists of the $5$ elements circled in red.}\label{fig:typeCposet}
\end{figure}

The elements of the root poset $\Phi^+(D_n)$ are $e_i\pm e_j$ for $1\leq i<j\leq n$. The simple roots $\alpha_1,\ldots,\alpha_n$ are
\[ \alpha_i = \begin{cases} e_i - e_{i+1} &\textrm{if $1\leq i \leq n-1$}; \\
e_{n-1}+e_n &\textrm{if $i=n$}.\end{cases}\]
See~\cref{fig:typeDposet} for a depiction of $\Phi^+(D_6)$. There is an involutive automorphism $\delta$ of the Dynkin diagram of $\Phi(D_n)$ that swaps the nodes $n-1$ and $n$. This induces an automorphism of the poset $\Phi^+(D_n)$ that swaps every occurrence of $\alpha_{n-1}$ appearing in an expansion of a root with $\alpha_n$ and vice versa; we also denote this automorphism by $\delta\colon \Phi^+(D_n) \to \Phi^+(D_n)$. We have $\delta(e_i-e_n)=e_i+e_n$ and $\delta(e_i+e_n)=e_i-e_n$ for each $1\leq i\leq n-1$; all of the other positive roots are fixed by~$\delta$. (Note that~$\delta=-w_0$ when $n$ is odd; but~$-w_0$, unlike $\delta$, is trivial when $n$ is even.)  In \cref{fig:typeDposet}, the roots of the form $e_i+e_6$ are colored white. Furthermore, for each~$1\leq i\leq 5$, we draw the root $e_i-e_6$ immediately to the right of $e_i+e_6$ in the figure. There is a natural bijection between the $\delta$-orbits in $\Phi^+(D_n)$ and the elements of $\Phi^+(C_{n-1})$, yielding the quotient map~$\gamma:\Phi^+(D_n)\to\Phi^+(C_{n-1})$. Referring to \cref{fig:typeDposet} again, we see that~$\gamma$ essentially ``glues'' each white element to the black element drawn immediately to the right of it. Let us define $\mathcal L_{D_n}\coloneqq\gamma^{-1}(\mathcal L_{C_{n-1}})$. 

\begin{figure}
\begin{center}
{\includegraphics[height=3.8cm]{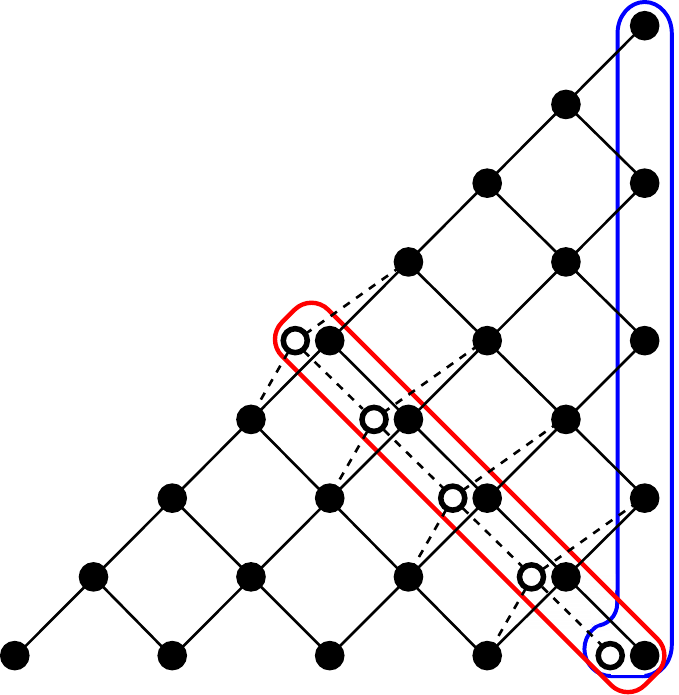}}
\end{center}
\caption{The root poset $\Phi^+(D_6)$. The set $\mathcal L_{D_6}$ consists of the $6$ elements circled in blue. The set $\mathcal{S}_{D_6}$ consists of the $10$ elements circled in red.}\label{fig:typeDposet}
\end{figure}

We define $\mathcal{S}_{C_n}\subseteq \Phi^+(C_n)$ to be the set of elements $\alpha\in\Phi^+(C_n)$ for which~$\gamma^{-1}(\alpha)$ consists of two elements; all other $\alpha\in\Phi^+(C_n)$ have $\#\gamma^{-1}(\alpha)=1$. (Under the isomorphism $\Phi^+(C_n)\simeq\Phi^+(B_n)$, $\mathcal{S}_{C_n}$ consists of the short roots in $\Phi^+(B_n)$.) We also define $\mathcal{S}_{A_{2n-1}}\subseteq \Phi^+(A_{2n-1})$ to be $\iota(\mathcal{S}_{C_n})$, and $\mathcal{S}_{D_n}\subseteq \Phi^+(D_n)$ to be $\gamma^{-1}(\mathcal{S}_{C_{n-1}})$. Equivalently, $\mathcal{S}_{A_{2n-1}}$ consists of those $[i,j] \in \Phi^+(A_{2n-1})$ for which either $i=n$ or $j=n+1$. These subsets $\mathcal{S}$ are also depicted in~\cref{fig:typeAposet,fig:typeCposet,fig:typeDposet}.

\begin{remark}
The processes of obtaining $\Phi^+(B_n)$ ($\simeq \Phi^+(C_n)$) as a quotient of $\Phi^+(A_n)$ and of obtaining $\Phi^+(C_n)$ as a quotient of~$\Phi^+(D_n)$ are special cases of a more general procedure from the theory of root systems referred to as \dfn{folding}. Given a simply laced root system~$\Phi$ and an automorphism $\sigma$ of the Dynkin diagram of $\Phi$ (subject to a certain technical condition), folding produces a new root system $\Phi^{\sigma}$ whose roots correspond to $\sigma$-orbits. Since we only need the cases of folding we have described explicitly above, we will not give a precise account of folding here; see~\cite{stembridge2008folding} for such an account. We also warn that while $\mathcal{L}$ and $\mathcal{S}$ were chosen to evoke the words ``long'' and ``short,'' the simply laced types A and D of course do not have roots of different lengths, so this notation is just meant to be suggestive.
\end{remark}

\subsection{Rowmotion and rowvacuation}

Now we review the rowmotion and rowvacuation operators acting on the antichains of a ranked poset $P$.\footnote{In~\cite{hopkins2020order} and~\cite{hopkins2020birational}, rowvacuation was defined for \dfn{graded} posets. The hypothesis that $P$ is graded is slightly stronger than the hypothesis that it is ranked: it requires additionally that $\rk(p)=\rk(P)$ for all maximal elements $p$ of $P$. However, none of the proofs of the basic properties of rowvacuation require this stronger assumption.}

\dfn{Rowmotion}, $\row \colon\mathcal{A}(P)\to\mathcal{A}(P)$, is given by
\[\row (A) \coloneqq \min(\{x\in P\colon \textrm{$x \not\leq y$ for all $y\in A$}\}),\] 
for all $A\in\mathcal{A}(P)$, where $\min(X)$ means the set of minimal elements of a subset $X\subseteq P$. Rowmotion is a bijection. Before Panyushev~\cite{panyushev2004adnilpotent}, rowmotion was studied by Brouwer and Schrijver~\cite{brouwer1974period} and Cameron and Fon-der-Flaass~\cite{cameron1995orbits}, among others.

\begin{example}
The two orbits of rowmotion on $\mathcal{A}(\Phi^+(A_2))$ are:
\begin{center}
\includegraphics[height=1.077cm]{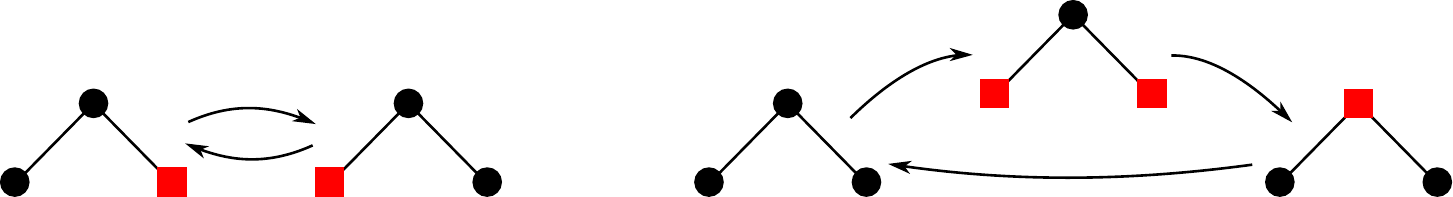}
\end{center}
The elements of each antichain are depicted with red squares. Observe $\row ^3=\eta$ and the average of $\#A$ along any $\row $-orbit is $1$.
\end{example}

As discussed in~\cref{sec:intro}, Armstrong, Stump, and Thomas~\cite{armstrong2013uniform} proved the following conjecture of Panyushev concerning rowmotion of nonnesting partitions.

\begin{thm}[{Armstrong--Stump--Thomas~\cite[Theorem~1.2]{armstrong2013uniform}}] \label{thm:ast_row}
The rowmotion operator $\row \colon \mathcal{A}(\Phi^+)\to \mathcal{A}(\Phi^+)$ satisfies the following:
\begin{itemize}
\item $\row ^{2h}$ is the identity (and $\row ^h=-w_0$ if $\Phi$ is irreducible);
\item $\frac{1}{\#O} \sum_{A \in O} \#A = \frac{r}{2}$ for any $\row $-orbit $O\subseteq \mathcal{A}(\Phi^+)$.
\end{itemize}
\end{thm}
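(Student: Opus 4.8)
The plan is to push everything over to the noncrossing world, where the relevant operator -- the Kreweras complement -- has transparent algebraic structure. Concretely, I would construct (as Armstrong--Stump--Thomas do) the bijection $\Theta_W\colon \mathcal{A}(\Phi^+)\to\NC(W)$ intertwining rowmotion with the Kreweras complement, $\Theta_W\cdot\row = \Krew\cdot\Theta_W$, together with its compatibility with the canonical symmetries (so that $\Theta_W$ conjugates the root-poset automorphism $-w_0$ to the corresponding canonical involution of $\NC(W)$). Pinning down such a $\Theta_W$ and proving it is well-defined is really the substance of the whole argument: the equivariance relation forces the value of $\Theta_W$ on an entire $\row$-orbit once it is fixed on a single antichain, so one has to choose compatible ``base points'' orbit by orbit and check that the forced values genuinely land in $\NC(W)$ and are consistent.

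Granting $\Theta_W$, the periodicity statements are formal. From $\Krew(w)=cw^{-1}$ one computes $\Krew^2(w)=c(cw^{-1})^{-1}=cwc^{-1}$, so $\Krew^2$ is conjugation by the Coxeter element $c$; since $c$ has order $h$, this gives $\Krew^{2h}=\mathrm{id}$, and hence $\row^{2h}=\Theta_W^{-1}\,\Krew^{2h}\,\Theta_W=\mathrm{id}$. For irreducible $\Phi$, $\Krew^{h}$ is the canonical involution of $\NC(W)$ corresponding to $-w_0$, so transporting it through $\Theta_W$ yields $\row^{h}=-w_0$.

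For the homomesy I would first record the analogous -- and transparent -- statement on the noncrossing side: $\rk(w)+\rk(\Krew(w))=r$ for every $w\in\NC(W)$. Indeed, $w\leq_T c$ forces $\ell_T(w)+\ell_T(w^{-1}c)=\ell_T(c)=r$, and $\Krew(w)=cw^{-1}$ is conjugate to $w^{-1}c$, so $\ell_T(cw^{-1})=r-\ell_T(w)$ because absolute length is a class function; summing over a $\Krew$-orbit $O$, on which $\Krew$ acts as a permutation, gives $2\sum_{w\in O}\rk(w)=r\,\#O$, i.e.\ $\rk$ is $\Krew$-homomesic with average $\frac{r}{2}$. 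The catch is that $\Theta_W$ does \emph{not} carry $\#A$ to $\rk(\Theta_W(A))$ pointwise -- already in type $A_2$, the $\row$-orbit whose antichain cardinalities are $0,2,1$ must correspond to a $\Krew$-orbit of three rank-one elements -- so the antichain homomesy is not a formal consequence of the noncrossing one. I would therefore prove it by a direct analysis of $\Theta_W$, verifying that the discrepancy $\#A-\rk(\Theta_W(A))$ has vanishing sum around every $\row$-orbit; combined with the noncrossing homomesy, this yields $\frac{1}{\#O}\sum_{A\in O}\#A=\frac{r}{2}$.

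I expect the main obstacle to be precisely this homomesy transfer, together with the construction of $\Theta_W$ on which it rests: the periodicity falls out cleanly from the Kreweras complement, but cardinality is not an orbit invariant and only connects to the noncrossing side after one controls $\Theta_W$ rank by rank (via its explicit combinatorial models in the classical types), which is where most of the intricate work lives.
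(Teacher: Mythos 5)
A preliminary point: this paper never proves \cref{thm:ast_row} at all---it is quoted from Armstrong--Stump--Thomas and used (together with \cref{thm:ast}) purely as input, so the relevant comparison is with the AST argument the paper points to, which your sketch follows at the top level. Several of your intermediate claims are correct and well observed: $\Krew^2(w)=cwc^{-1}$, hence $\Krew^{2h}=\mathrm{id}$; $\ell_T(cw^{-1})=r-\ell_T(w)$ for $w\in\NC(W)$ (since $cw^{-1}$ is conjugate to $w^{-1}c$ and $w\leq c$ in absolute order), giving the rank homomesy for $\Krew$; and your $A_2$ example correctly shows that $\#A$ is not carried pointwise to $\ell_T(\Theta_W(A))$, so you are right that the antichain homomesy is not a formal transfer.

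The gap is that the two steps carrying essentially all of the weight are announced rather than carried out. First, the existence and well-definedness of $\Theta_W$, together with its compatibility with $-w_0$---which you need in order to upgrade $\row^{2h}=\mathrm{id}$ to $\row^h=-w_0$, since one must identify $\Krew^h$ concretely (conjugation by $c^{h/2}=w_0$ when $h$ is even, an inversion-twisted map when $h$ is odd) and show that $\Theta_W$ intertwines the poset automorphism $-w_0$ with the corresponding symmetry of $\NC(W,c)$, a property that is \emph{not} among the defining conditions in \cref{thm:ast}---is precisely the substance of the AST paper; it is established there via the recursive characterization plus explicit combinatorial constructions in the classical types (the matching models reviewed in \cref{Sec:ASTBijectionA,Sec:ASTBijectionD}) and direct verification in the exceptional types. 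Writing that you would ``construct $\Theta_W$ as AST do'' is a citation, not a proof. Second, and more seriously, your plan for the homomesy is to ``verify by direct analysis of $\Theta_W$ that $\#A-\ell_T(\Theta_W(A))$ has vanishing sum around every $\row$-orbit'': this is a restatement of the claim to be proved, with no mechanism offered, and it is exactly the non-formal part where the intricate type-by-type work lives. Note also that the alternative route visible in this paper---a cardinality-complementing involution such as $\Rvac$ forces orbit averages $\frac{r}{2}$, per the remark citing \cite[\S2.8]{hopkins2020birational}---covers only the classical types via \cref{thm:main_intro} and itself relies on the AST machinery, so it cannot fill the gap either. As it stands, the proposal is a correct road map with the load-bearing lemmas missing.
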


As shown by Cameron and Fon-der-Flaass~\cite{cameron1995orbits}, and further emphasized by Striker and Williams~\cite{striker2012promotion}, there is an alternate way to describe rowmotion as a composition of certain local involutions called \dfn{toggles}. However, in~\cite{cameron1995orbits} and~\cite{striker2012promotion}, the order ideal variant of rowmotion and toggles are considered. We prefer to stick to antichains, and hence will focus on a description due to Joseph~\cite{joseph2019antichain} of rowmotion as a composition of antichain toggles. For $p\in P$, define the \dfn{toggle at $p$}, $\tau_p\colon \mathcal{A}(P)\to \mathcal{A}(P)$, to be the involution
\[\tau_p(A)\coloneqq \begin{cases} A\setminus \{p\} &\textrm{if $p\in A$}; \\
A\cup \{p\} &\textrm{if $p \notin A$ and $A\cup \{p\} \in \mathcal{A}(P)$};\\
A &\textrm{otherwise}.\end{cases}\]
Let us emphasize that these antichain toggles are not the same as order ideal toggles. Also, if $p,p'\in P$ are incomparable, then the toggles $\tau_p$ and $\tau_{p'}$ commute. 

\begin{prop}[{Joseph~\cite[Proposition~ 2.24]{joseph2019antichain}}]
We have $\row =\tau_{p_n} \tau_{p_{n-1}}\cdots \tau_{p_1}$ for any linear extension $p_1,p_2,\ldots,p_n$ of $P$.
\end{prop}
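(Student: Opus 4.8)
The plan is a direct induction on the number of toggles applied, carried by a single invariant. Given an antichain $A$, write $I_A \coloneqq \{x\in P : x\le a \text{ for some } a\in A\}$ for the order ideal it generates, so that unwinding the definition of rowmotion gives $\row(A)=\min(P\setminus I_A)$. For $0\le k\le n$ let $D_k\coloneqq\{p_1,\dots,p_k\}$; because $p_1,\dots,p_n$ is a linear extension, each $D_k$ is an order ideal of $P$, and $p_k$ is a maximal element of $D_k$. Set $A^{(k)}\coloneqq \tau_{p_k}\tau_{p_{k-1}}\cdots\tau_{p_1}(A)$. The statement to prove by induction on $k$ is
\[
A^{(k)}=\bigl(\min(P\setminus I_A)\cap D_k\bigr)\,\cup\,\bigl(A\setminus D_k\bigr).
\]
Taking $k=n$, where $D_n=P$, this reads $A^{(n)}=\min(P\setminus I_A)=\row(A)$, which is the proposition.

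The base case $k=0$ is immediate. For the inductive step, put $q=p_k$. Since each toggle changes the membership of only one element, $A^{(k-1)}$ agrees with $A$ outside $D_{k-1}$; in particular $q\in A^{(k-1)}$ exactly when $q\in A$. Moreover, since $p_1,\dots,p_n$ is a linear extension, every element of $P$ strictly below $q$ lies in $D_{k-1}$. I would then distinguish three cases. If $q\in I_A$, then $q\notin\min(P\setminus I_A)$, and $\tau_q$ either deletes $q$ (when $q\in A$) or does nothing (when $q\notin A$: some generator $a\in A$ with $q<a$ lies outside $D_{k-1}$, hence in $A^{(k-1)}$, so insertion is blocked); in both cases the right-hand side changes only by possibly dropping $q$ from its $A\setminus D_{k-1}$ part, which matches $A^{(k)}$. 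If $q\notin I_A$ but $q\notin\min(P\setminus I_A)$, choose $y<q$ with $y\in\min(P\setminus I_A)$; then $y\in D_{k-1}$, so $y\in A^{(k-1)}$, whence $\tau_q$ cannot insert $q$ and $A^{(k)}=A^{(k-1)}$, as predicted since $q\notin\min(P\setminus I_A)$. Finally, if $q\in\min(P\setminus I_A)$, then $q\notin A$, and $q$ is incomparable to every element of $A^{(k-1)}$: an element of $\min(P\setminus I_A)\cap D_{k-1}$ below $q$ would have to lie in $I_A$ by minimality of $q$, a contradiction; and an element of $A\setminus D_{k-1}$ above $q$ would force $q$ into the order ideal $I_A$, also impossible. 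Hence $\tau_q$ inserts $q$, giving $A^{(k)}=A^{(k-1)}\cup\{q\}$, which is exactly the right-hand side since $D_k=D_{k-1}\cup\{q\}$ and $q\in\min(P\setminus I_A)$.

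I expect the only genuine difficulty to be finding the invariant; once it is written down, each case is a short check. The one place that needs a little care is the last case, where one must rule out a comparability between the freshly inserted $q$ and the ``untouched'' part $A\setminus D_{k-1}$ of the current antichain — and this is precisely where the facts that $D_{k-1}$ is a down-set and that $q$ is minimal outside $I_A$ are both used. (Alternatively, the proposition could be deduced from the Cameron--Fon-der-Flaass description of order-ideal rowmotion as a product of order-ideal toggles, after translating through the standard bijection between antichains and order ideals; but the self-contained induction above seems preferable.)
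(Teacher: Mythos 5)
Your proof is correct. Note that the paper itself does not prove this proposition at all---it is imported verbatim from Joseph's paper---so there is no internal argument to compare against; what you have written is a legitimate self-contained derivation. The invariant $A^{(k)}=\bigl(\min(P\setminus I_A)\cap D_k\bigr)\cup\bigl(A\setminus D_k\bigr)$, with $I_A$ the order ideal generated by $A$, is exactly the right thing to carry through the induction, and your three cases are exhaustive and check out. The only two comparability directions you leave implicit in the final case (an element of $\min(P\setminus I_A)\cap D_{k-1}$ lying \emph{above} $q$, or an element of $A\setminus D_{k-1}$ lying \emph{below} $q$) are ruled out at once by facts you have already recorded: $D_{k-1}$ is a down-set not containing $q$, and every element strictly below $q$ lies in $D_{k-1}$; it would be worth one clause to say so. For comparison, the cited source proceeds essentially along the alternative you mention in parentheses: Joseph transfers the Cameron--Fon-der-Flaass product-of-toggles description of order-ideal rowmotion through the standard bijection between order ideals and antichains, relating antichain toggles to order-ideal toggles along the way. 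Your direct induction avoids that translation at the cost of carrying the invariant explicitly, which is arguably the more transparent route and is equally general, since it uses nothing about $P$ beyond finiteness.
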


Next we define rowvacuation, which, unlike rowmotion, requires our poset to be ranked. For $i=0,1,\ldots,\rk(P)$, we define the \dfn{rank toggle} $\boldsymbol{\tau}_i\colon\mathcal{A}(P)\to \mathcal{A}(P)$ to be~$\boldsymbol{\tau}_i \coloneqq \prod_{p \in P_i} \tau_p$. All the toggles $\tau_p$ for $p\in P_i$ commute, so this product makes sense. Evidently, 
\[\row= \boldsymbol{\tau}_{\rk(P)} \boldsymbol{\tau}_{\rk(P)-1} \cdots \boldsymbol{\tau}_0.\] 
\dfn{Rowvacuation}, $\Rvac\colon \mathcal{A}(P) \to\mathcal{A}(P)$, is a different product of these rank toggles:
\[ \Rvac\coloneqq (\boldsymbol{\tau}_{\rk(P)}) (\boldsymbol{\tau}_{\rk(P)} \boldsymbol{\tau}_{\rk(P)-1}) \cdots (\boldsymbol{\tau}_{\rk(P)} \boldsymbol{\tau}_{\rk(P)-1} \cdots \boldsymbol{\tau}_1) (\boldsymbol{\tau}_{\rk(P)} \boldsymbol{\tau}_{\rk(P)-1} \cdots \boldsymbol{\tau}_0). \] 
We write $\row_P$ and $\Rvac_P$ when we wish to emphasize the underlying poset $P$; this will be useful when we consider multiple posets at once. 

\begin{example}
We show a computation of $\Rvac(A)$ for an antichain $A\in\mathcal{A}(\Phi^+(D_4))$ below:
\[\hphantom{\xrightarrow{\boldsymbol{\tau}_4\boldsymbol{\tau}_3\boldsymbol{\tau}_2\boldsymbol{\tau}_1\boldsymbol{\tau}_0}}\begin{array}{c}
\includegraphics[height=1.887cm]{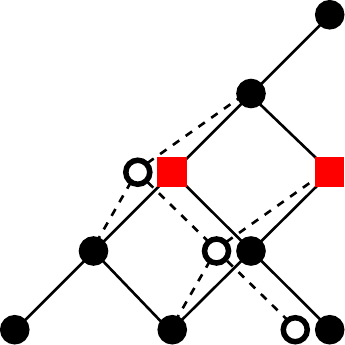}
\end{array}\xrightarrow{\boldsymbol{\tau}_4\boldsymbol{\tau}_3\boldsymbol{\tau}_2\boldsymbol{\tau}_1\boldsymbol{\tau}_0}\begin{array}{c}
\includegraphics[height=1.887cm]{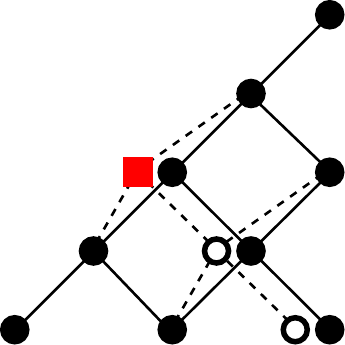}
\end{array}\xrightarrow{\,\,\,\boldsymbol{\tau}_4\boldsymbol{\tau}_3\boldsymbol{\tau}_2\boldsymbol{\tau}_1\,\,\,}\begin{array}{c}
\includegraphics[height=1.887cm]{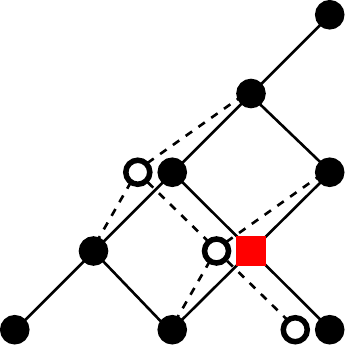}
\end{array}\]
\[\xrightarrow{\hphantom{\boldsymbol{\tau}_1}\boldsymbol{\tau}_4\boldsymbol{\tau}_3\boldsymbol{\tau}_2\hphantom{\boldsymbol{\tau}_0}}\begin{array}{c}
\includegraphics[height=1.887cm]{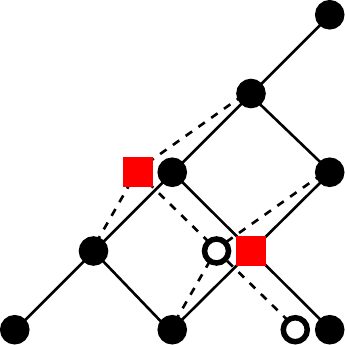}
\end{array}\xrightarrow{\,\,\,\hphantom{\boldsymbol{\tau}_1}\boldsymbol{\tau}_4\boldsymbol{\tau}_3\hphantom{\boldsymbol{\tau}_0}\,\,\,}\begin{array}{c}
\includegraphics[height=1.887cm]{RowvacPIC24New}
\end{array}\xrightarrow{\hphantom{\boldsymbol{\tau}_3\boldsymbol{\tau}_2}\boldsymbol{\tau}_4\hphantom{\boldsymbol{\tau}_1\boldsymbol{\tau}_0}}\begin{array}{c}
\includegraphics[height=1.887cm]{RowvacPIC24New}
\end{array}.\]
Observe that $\#A+\#\Rvac(A)=2+2=4$.
\end{example}

Rowvacuation was first formally defined, briefly, in~\cite[\S5.1]{hopkins2020order} and was further studied in~\cite{hopkins2020birational}. As mentioned in~\cref{sec:intro}, rowmotion and rowvacuation are ``partner'' operators in exactly the same way that  Sch\"{u}tzenberger's promotion and evacuation operators are. For instance, together, they always generate a dihedral group action:

\begin{prop}[{\cite[Proposition~5.1]{hopkins2020order} \cite[Proposition~2.18]{hopkins2020birational}}] \label{prop:row_basics}
For any ranked~$P$:
\begin{itemize}
\item $\Rvac$ is an involution;
\item $\Rvac\cdot \row= \row^{-1} \cdot \Rvac$.
\end{itemize}
\end{prop}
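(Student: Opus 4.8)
The plan is to prove this exactly the way one proves the classical theorem of Schützenberger that evacuation of linear extensions is an involution conjugating promotion to its inverse: everything happens inside the group generated by the rank toggles $\boldsymbol\tau_0,\dots,\boldsymbol\tau_{n}$ (with $n:=\rk(P)$) acting on $\mathcal A(P)$, so the argument is formal once the right relations are fed in. The inputs are: each $\boldsymbol\tau_i$ is an involution (immediate, being a product of the commuting involutions $\tau_p$ for $p\in P_i$); toggles $\tau_p,\tau_{p'}$ commute when $p,p'$ are incomparable; and Joseph's description of $\row$ as a composition of toggles.

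First I would fix the ``partial promotion'' notation $\pi_k:=\boldsymbol\tau_{n}\boldsymbol\tau_{n-1}\cdots\boldsymbol\tau_{n-k}$ for $0\le k\le n$, so that $\pi_0=\boldsymbol\tau_{n}$, $\pi_n=\row$, and $\Rvac=\pi_0\pi_1\cdots\pi_n$ by definition. The core is a recursion on the number of ranks of $P$: repeatedly using $\boldsymbol\tau_n^2=\mathrm{id}$ to cancel leading copies of $\boldsymbol\tau_n$ rewrites $\Rvac_P$ in terms of the corresponding operator built from just $\boldsymbol\tau_1,\dots,\boldsymbol\tau_n$ (a ``rank-shorter'' rowvacuation) together with $\row_P$; iterating down to the base case $\Rvac=\boldsymbol\tau_n$, which is trivially an involution, gives $\Rvac^2=\mathrm{id}$. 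For the second identity I would use that, given $\Rvac^2=\mathrm{id}$, the relation $\Rvac\cdot\row=\row^{-1}\cdot\Rvac$ is equivalent to $\row\cdot\Rvac$ being an involution, and attack the latter by the same rank induction (or by the same direct bookkeeping).

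The hard part is that, unlike the order-ideal toggles of Cameron--Fon-der-Flaass and Striker--Williams, antichain rank toggles are \emph{not} local: $\boldsymbol\tau_i$ and $\boldsymbol\tau_j$ need not commute even when $|i-j|\ge 2$ (already for a three-element chain one checks $\boldsymbol\tau_0\boldsymbol\tau_2\neq\boldsymbol\tau_2\boldsymbol\tau_0$). So the textbook ``toggle group'' manipulation does not go through verbatim, and in executing the recursion one must control precisely how $\boldsymbol\tau_n$ passes the lower toggles. I expect most of the work to be in tracking, at each step, which elements of a given antichain are actually available to be toggled --- the useful point being that after $\row$ has acted the ``upper frontier'' of the antichain is constrained enough to let the higher rank toggles commute past the lower ones in the pattern the recursion needs. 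An alternative, possibly cleaner route is to transport everything to a model in which the rank toggles \emph{do} commute at distance $\ge 2$ --- the birational toggle group, or the order-ideal toggle picture --- prove both identities there by the standard abstract lemma on products of involutions, and transfer back; there the cost is having to identify the bijection intertwining the whole $\langle\row,\Rvac\rangle$-action rather than merely $\row$. Either way the proof is entirely uniform: no appeal to the type of $\Phi$, and in fact no hypothesis on $P$ beyond being ranked.
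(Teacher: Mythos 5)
Note first that the paper does not actually prove \cref{prop:row_basics}: it is imported wholesale from~\cite{hopkins2020order} and~\cite{hopkins2020birational}, so what has to be judged is whether your sketch could stand as a proof. You have correctly located the crux --- your three-chain computation showing $\boldsymbol{\tau}_0\boldsymbol{\tau}_2\neq\boldsymbol{\tau}_2\boldsymbol{\tau}_0$ is right, since antichain toggles $\tau_p,\tau_{p'}$ only commute for \emph{incomparable} $p,p'$, and elements whose ranks differ by at least $2$ can be comparable --- but your primary route then stops exactly at that point. The recursion you propose (cancel leading copies of $\boldsymbol{\tau}_{\rk(P)}$, reduce to a rank-shorter rowvacuation plus $\row$) is precisely the Sch\"utzenberger manipulation, and every cancellation in it requires pushing high rank toggles past low ones; since the only relations you have in hand are $\boldsymbol{\tau}_i^2=\mathrm{id}$, which generate a free product of involutions, no purely formal bookkeeping can succeed, and any correct argument must use concretely how the toggles act on antichains. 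The sentence about the ``upper frontier'' being ``constrained enough to let the higher rank toggles commute past the lower ones in the pattern the recursion needs'' is a hope, not a mechanism: you do not say what the modified commutation statement is, and as stated I do not see how to make it true (already on a chain the needed exchanges fail on specific antichains, not just as group elements).

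Your alternative route is the one that actually works, and it is in essence how the cited sources argue: one proves both bullet points for rowvacuation built from \emph{order-ideal} (or order-filter) rank toggles, where toggles at ranks differing by at least $2$ genuinely commute and the standard products-of-involutions lemma applies verbatim, and then transports the statement to antichains using the dictionary of Joseph~\cite{joseph2019antichain} between the antichain toggle group and the order-ideal toggle group (an antichain toggle corresponds to an explicit \emph{product} of ideal toggles, not a single one --- this is the nontrivial input). But in your sketch this transfer step, i.e.\ showing that the intertwining bijection carries the ideal-side rowvacuation to $\Rvac$ on $\mathcal{A}(P)$ and not merely rowmotion to rowmotion, is exactly what you defer, and it is the entire content of the proposition. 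One further caution: the ``birational toggle group'' you mention as a commuting model only helps if you mean the order-polytope/ideal-type toggles; the birational or piecewise-linear lifts of \emph{antichain} toggles inherit the same failure of distance-two commutation, so lifting does not by itself buy the relations you need. In short: the difficulty is correctly diagnosed, but neither route as written closes the gap.
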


In general, it seems hard to ``describe'' the rowvacuation of an antichain. But in~\cite{hopkins2020birational}, the second author and Joseph showed that rowvacuation acting on a root poset of Type~A can be computed as follows. 

\begin{thm}[{H.--Joseph~\cite[Theorem~3.5]{hopkins2020birational}}] \label{thm:lk}
Let $A\in\mathcal{A}(\Phi^+(A_{n-1}))$. Note $A$ is of the form $A=\{[i_1,j_1],\ldots,[i_k,j_k]\}$ with $i_1 < i_2 < \cdots < i_k$ and $j_1 < j_2 < \cdots < j_k$. Then
\[\Rvac(A) = \{ [i'_1,j'_1],\ldots, [i'_{n-1-k},j'_{n-1-k}]\},\]
where
\begin{align}  
\notag \{i'_1 < i'_2 < \cdots < i'_{n-1-k}\} &\coloneqq \{1,2,\ldots,n-1\}\setminus \{j_1-1,j_2-1,\ldots,j_k-1\}, \\
\notag \{j'_1 < j'_2 < \cdots < j'_{n-1-k}\} &\coloneqq \{2,3,\ldots,n\}\setminus \{i_1+1,i_2+1,\ldots,i_k+1\}.
\end{align}
\end{thm}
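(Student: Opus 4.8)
The plan is to derive the closed form from \cref{thm:ast_intro} together with the explicit combinatorial description of the Armstrong--Stump--Thomas bijection $\Theta_W$ in Type~A (as reviewed in \cref{Sec:ASTBijectionA}). Combining \cref{thm:ast_intro} with the defining intertwining property $\Theta_W\cdot\row=\Krew\cdot\Theta_W$ yields
\[\Rvac = \row\cdot\Theta_W^{-1}\cdot\Flip\cdot\Theta_W = \Theta_W^{-1}\cdot(\Krew\cdot\Flip)\cdot\Theta_W.\]
In the Type~A model, noncrossing partitions are noncrossing chord diagrams on the vertices of a convex $n$-gon, $\Krew$ is rotation by one vertex, and $\Flip$ is reflection across a fixed diameter; hence $\Krew\cdot\Flip$ is again a reflection of the $n$-gon, across some axis $\ell$. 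So the proof reduces to computing how reflection across $\ell$ acts once transported to $\mathcal{A}(\Phi^+(A_{n-1}))$ via $\Theta_W$, and checking that this agrees with the left/right-endpoint complementation in the statement.

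In more detail, I would proceed as follows. (1) Identify $\Flip$ for the standard Type~A Coxeter element, i.e. pin down the involution $g\in W$ from the recipe after \cref{thm:ast_intro} and verify at the level of the $n$-gon model that $w\mapsto gw^{-1}g^{-1}$ is reflection across a diameter. (2) Compose with the rotation $\Krew$ to locate the axis $\ell$ of the reflection $\Krew\cdot\Flip$. (3) Unwind $\Theta_W^{-1}\cdot(\text{reflection across }\ell)\cdot\Theta_W$ using the explicit recipe for $\Theta_W$: starting from an antichain $A=\{[i_1,j_1],\dots,[i_k,j_k]\}$ with $i_1<\cdots<i_k$ and $j_1<\cdots<j_k$, determine the associated chord diagram, reflect it, and read off the intervals of the resulting antichain, keeping separate tabs on the set of left endpoints $\{i_m\}$ and the set of right endpoints $\{j_m\}$. (4) Match the output with the stated map. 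It is convenient here to pass to coordinates $(a,b)=(i,j-1)$, identifying $\Phi^+(A_{n-1})$ with the staircase poset $\{(a,b):1\le a\le b\le n-1\}$; in those coordinates the claimed formula says exactly that the set of first coordinates of $\Rvac(A)$ is $[n-1]$ minus the set of second coordinates of $A$, and the set of second coordinates of $\Rvac(A)$ is $[n-1]$ minus the set of first coordinates of $A$. From this closed form both well-definedness (the complemented sets still interleave correctly, since complementation reverses the componentwise order on equicardinal subsets) and involutivity are immediate, so all that remains is to reconcile this clean description with the output of step (3).

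The main obstacle is step (3): the Type~A description of $\Theta_W$ is assembled from an intricate sequence of local moves, and faithfully propagating the left- and right-endpoint data through it, and through the reflection, is where essentially all the work lies. There is also a more self-contained route that avoids $\Theta_W$ entirely: compute the toggle composition $\Rvac=\boldsymbol{\tau}_{\rk(P)}(\boldsymbol{\tau}_{\rk(P)}\boldsymbol{\tau}_{\rk(P)-1})\cdots(\boldsymbol{\tau}_{\rk(P)}\cdots\boldsymbol{\tau}_0)$ directly. For this one first records the effect of a single rank sweep $\boldsymbol{\tau}_{\rk(P)}\cdots\boldsymbol{\tau}_s$ on an antichain in the interval model (equivalently, a precise formula for $\row_{\Phi^+(A_{n-1})}$, which also follows from \cite{armstrong2013uniform}), and then argues by induction on the number of sweeps that the partial products stabilize to the stated formula, using at each stage that the already-accumulated endpoint data is unaffected by the remaining sweeps. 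This route is longer but entirely elementary; either way, the resulting endpoint-complementation map is (a form of) the Lalanne--Kreweras involution.
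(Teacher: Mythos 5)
The paper never proves this statement: it is imported verbatim from Hopkins--Joseph \cite[Theorem~3.5]{hopkins2020birational}, so there is no in-paper argument to compare against, and your task was really to reprove that external result. Your framework is legitimate and non-circular within this paper's logic: \cref{thm:ast_intro} is established independently of \cref{thm:lk}, so writing $\Rvac=\Theta_W^{-1}\cdot(\Krew\cdot\Flip)\cdot\Theta_W$ and computing the right-hand side in the Type~A model is a viable route (it is essentially a half-step refinement of what the paper does in \cref{lem:ReflectionA}, where $\Krew^2\cdot\Flip$ is shown to act as reflection of the matching diagram through $M$). But as written the proposal has a genuine gap: the entire content of the theorem is the explicit endpoint-complementation formula, and the step that would produce it --- your step (3), tracing the left- and right-endpoint data of $A$ through the construction of $\varphi_{A_{n-1}}(A)$, through the reflection across the axis $\ell$, and back through $\Theta_W^{-1}$ --- is exactly the step you declare to be ``where essentially all the work lies'' and do not carry out. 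The same is true of the alternative toggle-sweep route: you assert that a formula for a single rank sweep exists and that an induction ``stabilizes'' to the stated formula, but neither the sweep formula nor the stabilization argument is given. A proof sketch that defers precisely the computation being claimed is not yet a proof.

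Two smaller points to fix if you pursue this. First, your description of the dihedral action is imprecise: on the $n$-vertex chord model of $\NC(W(A_{n-1}))$ the Kreweras complement is an order-reversing map whose \emph{square} is rotation (as the paper notes), not itself a rotation by one vertex; the statement you actually need is that $\Krew$ acts as a one-step rotation on the $2n$-vertex AST matching diagrams $\varphi_{A_{n-1}}$, which follows from the equivariance in \cref{thm:ast} together with \cref{thm:explicitastA} but should be stated and justified in that model, since the location of the axis $\ell$ (through vertices versus between vertices, and which ones) is exactly what determines whether you get the claimed complementation with the shifts $j_m-1$ and $i_m+1$ rather than some rotated variant of it. Second, the well-definedness remark needs a hair more care: that $i'_m<j'_m$ for every $m$ (so that each $[i'_m,j'_m]$ is a genuine positive root) is a short interleaving argument, not literally immediate from complementation reversing the componentwise order.
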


It is immediate from the formula in~\cref{thm:lk} that $\#A+\#\Rvac(A)=n-1$. As explained in~\cite[\S1]{hopkins2020birational}, this formula is equivalent to the \dfn{Lalanne--Kreweras involution} on Dyck paths via a simple bijection between the antichains in $\mathcal{A}(\Phi^+(A_{n-1}))$ and the Dyck paths of length $2n$. Rowvacuation clearly commutes with any poset automorphism. Therefore, by embedding $\mathcal{A}(\Phi^+(C_n))$ into $\mathcal{A}(\Phi^+(A_{2n-1}))$ via $\iota$, \cref{thm:lk} also yields a simple description of rowvacuation acting on $\mathcal{A}(\Phi^+(C_n))$.

\begin{example}
If $A=\{[1,3]\}\in\mathcal{A}(\Phi^+(A_3))$, then $\Rvac(A)=\{[1,3],[3,4]\}$:
\begin{center}
\includegraphics[height=1.023cm]{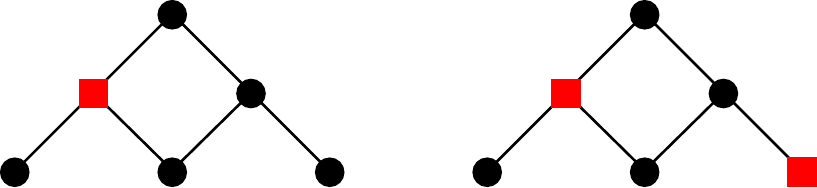}
\end{center}
The reader may check that this agrees with~\cref{thm:lk}
\end{example}

Before we move on to discuss Panyushev's conjectured duality for root poset antichains, let us prove a few more general properties of rowvacuation that will show that it is a good candidate for this duality.

\begin{prop} \label{prop:rvac_ranks}
We have $\Rvac(P_i)=P_{\rk(P)+1-i}$ for any $i=0,1,\ldots,\rk(P)+1$.
\end{prop}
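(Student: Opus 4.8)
The plan is to compute $\Rvac$ directly on the antichains $P_i$ by tracking what the rank toggles $\boldsymbol\tau_j$ do to a ``full rank'' antichain, exploiting the fact that $P_i$ is about as degenerate an antichain as possible. First I would record two elementary observations about the rank toggles acting on the distinguished antichains $P_j \in \mathcal A(P)$. Observation one: for $k \ne j$, applying $\boldsymbol\tau_k$ to $P_j$ does nothing, i.e.\ $\boldsymbol\tau_k(P_j) = P_j$; indeed no element of rank $j$ can be removed (rank $k$ contains none of them), and no element $p$ of rank $k$ can be added to $P_j$ because $p$ is comparable to some element of $P_j$ (walk up or down a saturated chain from $p$ to rank $j$, which exists since $0 \le j \le \rk(P)$). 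Observation two: $\boldsymbol\tau_j(P_j) = P_{j-1} \cup P_{j+1}$ where by convention $P_{-1} = P_{\rk(P)+1} = \varnothing$ — every rank-$j$ element gets removed, and then nothing more can be toggled in since the resulting set already blocks every rank-$j$ element. (For $j=0$ this reads $\boldsymbol\tau_0(P_0) = P_1$, and for $j = \rk(P)$ it reads $\boldsymbol\tau_{\rk(P)}(P_{\rk(P)}) = P_{\rk(P)-1}$.) The cases $i = 0$ and $i = \rk(P)+1$ of the proposition, where $P_i = \varnothing$, should be handled separately and are easy: $\Rvac$ fixes $\varnothing$ if and only if $\varnothing$ is a two-sided target, and one checks $\Rvac(\varnothing) = \varnothing$ directly from the toggle formula (or, more slickly, from $\Rvac(\varnothing) = P_{\rk(P)+1} = \varnothing$ being exactly the degenerate endpoint of the statement, so it suffices to treat $1 \le i \le \rk(P)$).

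Next I would exploit the relation $\Rvac \cdot \row = \row^{-1}\cdot \Rvac$ from \cref{prop:row_basics}, or rather a cleaner route: recall the well-known identity $\Rvac = \row^{\rk(P)+1}\cdot \mathrm{(pro\text{-}to\text{-}evac\ type\ conjugation)}$ — more precisely, I would use the standard fact that $\Rvac = \row \cdot \Rvac' \cdot \row^{-1}$ style manipulations, but the most transparent approach is simply to observe that on the single orbit $\{P_0, P_1, \ldots, P_{\rk(P)}, \varnothing_{\mathrm{top}}\}$ rowmotion acts as $\row(P_i) = P_{i+1}$ (with $\row(P_{\rk(P)}) = \varnothing$... — actually one must be careful here, since $\row$ need not send $P_i$ to $P_{i+1}$ in a general ranked poset). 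To avoid this pitfall I would instead argue purely with the toggle expression for $\Rvac$. Write $\Rvac = B_{\rk(P)} B_{\rk(P)-1} \cdots B_0$ where $B_m \coloneqq \boldsymbol\tau_{\rk(P)}\boldsymbol\tau_{\rk(P)-1}\cdots\boldsymbol\tau_m$. Using Observations one and two, I would show by a short induction that feeding $P_i$ through the blocks from right to left, the ``active rank'' of the antichain performs a predictable walk: $B_0$ sends $P_i$ to $P_{i-1}\cup P_{i+1}$ when $i < \rk(P)$... — in fact here too the presence of two ranks complicates the bookkeeping, so the genuinely clean statement to prove inductively is the one in the next paragraph.

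The cleanest formulation, and the one I would actually carry out: prove by downward induction on the number of blocks that $B_{m}B_{m-1}\cdots B_0 (P_i)$ equals $P_{c}$ or a small union of consecutive ranks, pinning down exactly which ranks survive. Concretely, I expect the pattern that $\boldsymbol\tau_j$ applied to $P_c$ "reflects" the active rank, and composing all the toggles in the nested pattern defining $\Rvac$ produces the reflection $c \mapsto \rk(P)+1 - c$ on the index. The main obstacle is precisely this combinatorial bookkeeping: a single rank toggle applied to $P_c$ can split it into two ranks $P_{c-1}\cup P_{c+1}$, and one must verify these pieces recombine correctly under subsequent toggles rather than proliferating. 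I would manage this by proving the sharper invariant that at every intermediate stage the antichain is of the form $P_a \cup P_b$ with $b - a \in \{0, 2\}$ (a single rank, or two ranks with a gap), checking that each $\boldsymbol\tau_j$ preserves this shape and moves the indices in the expected way; the telescoping of the $B_m$ blocks then yields $\Rvac(P_i) = P_{\rk(P)+1-i}$. Alternatively — and this may be the shortest path — I would deduce it from \cref{thm:lk} and the folding maps for the classical types and then note the general case follows by the same toggle computation; but since the statement is for arbitrary ranked $P$, the self-contained toggle induction above is the right approach, and I would present that.
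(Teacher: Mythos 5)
Your overall strategy---pushing $P_i$ through the toggle expression for $\Rvac$ block by block---is the same as the paper's, but the local facts you propose to drive it are wrong, and the actual computation is never carried out. Your Observation two is false: $\boldsymbol{\tau}_j$ is the product of the toggles $\tau_p$ for $p\in P_j$ \emph{only}, so applied to $P_j$ it removes every rank-$j$ element and can add nothing from other ranks; hence $\boldsymbol{\tau}_j(P_j)=\varnothing$, not $P_{j-1}\cup P_{j+1}$ (you appear to be thinking of order-ideal toggles or of rowmotion-like behavior). Similarly, $P_0$ is the set of minimal elements, not $\varnothing$, and $\Rvac(\varnothing)=P_0$, not $\varnothing$, so your treatment of the boundary cases $i=0$ and $i=\rk(P)+1$ is also incorrect. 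These errors propagate: the claimed block behavior ``$B_0$ sends $P_i$ to $P_{i-1}\cup P_{i+1}$'' is wrong (it sends $P_i$ to $P_{i+1}$), and the invariant you defer to in the last paragraph (intermediate antichains of the form $P_a\cup P_b$ with $b-a\in\{0,2\}$) describes states that never occur; you explicitly identify the bookkeeping as the main obstacle and then do not do it, so the proof is missing precisely where it matters.

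The correct bookkeeping, which is what the paper's proof consists of, is simpler than your plan suggests. For the block $d_k=\boldsymbol{\tau}_{\rk(P)}\cdots\boldsymbol{\tau}_k$ applied to $P_i$ with $k\le i\le\rk(P)$: the toggles $\boldsymbol{\tau}_k,\ldots,\boldsymbol{\tau}_{i-1}$ do nothing (every element of rank $<i$ lies below an element of rank $i$---note this step, like the ``walk up'' in your Observation one, implicitly uses that all maximal elements have top rank, which holds for the root posets where the proposition is applied), then $\boldsymbol{\tau}_i$ empties the antichain, then $\boldsymbol{\tau}_{i+1}$ fills in all of $P_{i+1}$, and the remaining toggles do nothing; so $d_k(P_i)=P_{i+1}$, while $d_k(\varnothing)=P_k$ and $d_k(P_i)=P_i$ when $i<k$. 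In particular every intermediate antichain is a single full rank or empty---never a two-rank union---and composing $d_0,\ldots,d_{\rk(P)}$ pushes $P_i$ up one rank per block until it vanishes off the top, after which the block $d_{\rk(P)+1-i}$ creates $P_{\rk(P)+1-i}$, which all later blocks fix. Replacing your two Observations with these verified block computations is what is needed to make your outline into a proof.
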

\begin{proof}
Define $d_k \coloneqq (\boldsymbol{\tau}_{\rk(P)} \boldsymbol{\tau}_{\rk(P)-1} \cdots \boldsymbol{\tau}_k)$ for $k=0,\ldots,\rk(P)$ so that
\[ \Rvac= d_{\rk(P)} d_{\rk(P)-1} \cdots d_1 d_0.\]
It is easy to see for $i=0,\ldots,\rk(P)+1$ that
\[ d_k (P_i) = \begin{cases} P_{k} &\textrm{if $i=\rk(P)+1$}; \\
P_{i+1} &\textrm{if $k\leq i\leq \rk(P)$}; \\
P_i &\textrm{otherwise}. \end{cases} \]
Therefore,
\begin{align*}
\Rvac(P_i) &= d_{\rk(P)} d_{\rk(P)-1} \cdots d_1 d_0(P_i) \\
&= d_{\rk(P)} d_{\rk(P)-1} \cdots d_{\rk(P)+1-i} (P_{\rk(P)+1}) \\
&= d_{\rk(P)} d_{\rk(P)-1} \cdots d_{\rk(P)+2-i} (P_{\rk(P)+1-i}) \\
&= P_{\rk(P)+1-i},
\end{align*}
as claimed.
\end{proof}

\begin{prop} \label{prop:rvac_parabolic}
Let $p \in P_0$ be a minimal element, and let $P' \coloneqq \{q\in P\colon q \not\geq p\}$. Then for any $A \in \mathcal{A}(P)$:
\begin{itemize}
\item if $A\subseteq P'$, then $\Rvac(A) = \{p\} \cup \Rvac_{P'}(A)$;
\item if $p \in A$, then $\Rvac(A) = \Rvac_{P'}(A\setminus\{p\})$.
\end{itemize}
\end{prop}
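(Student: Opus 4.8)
The plan is to mimic the argument of \cref{prop:rvac_ranks}: write both $\Rvac_P$ and $\Rvac_{P'}$ as the appropriate products of the ``truncated rowmotion'' operators $d_k = \boldsymbol{\tau}_{\rk(P)} \boldsymbol{\tau}_{\rk(P)-1} \cdots \boldsymbol{\tau}_k$, and track how each $d_k$ acts on an antichain relative to the single element $p$. First I would observe that since $p$ is minimal, $\rk(p) = 0$, and the only rank toggle that can ever touch $p$ is $\boldsymbol{\tau}_0$; moreover, removing $p$ from $P$ removes exactly the elements $q \geq p$, and these are spread across ranks $1,\ldots,\rk(P)$ (with $q \geq p$ forcing $\rk(q) \geq 1$). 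Crucially, because $P'$ is a down-set complement of an up-set, it is itself a ranked poset with the same rank function restricted, and $\rk(P') \leq \rk(P)$; I would first check (or note it suffices) that $\rk(P') = \rk(P)$ in the cases we care about, or more robustly, phrase everything so the ranks line up — the rank toggles $\boldsymbol{\tau}_i^{P'}$ for $i > \rk(P')$ are just the identity, so the products still match up formally.

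The key step is a toggle-commutation bookkeeping lemma: for any antichain $A$ and any $q \in P$ with $q \geq p$, once we are in a configuration where either $p \in A$ or no element $\leq p$ other than $p$ sits below the current antichain, the toggles at the elements $q \geq p$ never fire, so the $d_k$'s restricted to such antichains act exactly as the $d_k$'s of $P'$ — except for the bottom operator $d_0$, which contains $\boldsymbol{\tau}_0 = \tau_p \prod_{p' \in P_0 \setminus \{p\}} \tau_{p'}$. I would analyze the two cases of the proposition by running $d_0$ first and seeing what it does to $p$: if $A \subseteq P'$ (so $p \notin A$ and nothing in $A$ is $\geq p$, since $A \subseteq P'$), then $\tau_p$ adds $p$, and thereafter $p$ blocks everything above it, so the remaining $d_k$'s with $k \geq 1$ act on $A \cup \{p\}$ exactly as the $P'$-rowvacuation acts on $A$, leaving $p$ untouched — giving $\Rvac(A) = \{p\} \cup \Rvac_{P'}(A)$. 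If $p \in A$, then $\tau_p$ removes $p$; but I must check that $A \setminus \{p\}$ is supported in $P'$ (true, since $A$ is an antichain containing $p$, so no element of $A$ other than $p$ can be $\geq p$), and that none of the later toggles re-add $p$ or add anything $\geq p$ — this requires showing that running $\Rvac_{P'}$ on $A \setminus \{p\}$ never produces an antichain that would ``want'' $p$ back, which follows because $d_k$ for $k \geq 1$ never involves $\tau_p$ at all, and the final answer $\Rvac_{P'}(A \setminus \{p\})$ lives in $P'$, so $p$ is genuinely absent.

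The main obstacle I anticipate is the interleaving of $\boldsymbol{\tau}_0$ with the later rank toggles in the definition of $\Rvac$: $\Rvac = d_{\rk(P)} \cdots d_1 d_0$, and $d_0$ is the \emph{last} operator applied, so I need to argue that after applying $d_0$ the antichain is in the ``good'' regime (supported in $P'$, modulo the status of $p$) and stays there. Concretely, the subtle point is commuting $\tau_p$ past the toggles $\tau_q$ with $q \in P_i$, $i \geq 1$: $\tau_p$ commutes with $\tau_q$ whenever $p \not< q$ and $p \not> q$, which holds for all $q$ with $\rk(q) \geq 1$ \emph{except} those $q > p$. So the honest argument is: in the string defining $\Rvac$, move every factor $\tau_p$ (there is one inside each $d_k$? — no, only inside $d_0$, since $\boldsymbol{\tau}_0$ appears only in $d_0$) to act, and observe it only ever meets toggles at elements incomparable to $p$ or at $p$ itself within $d_0$'s block. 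I expect this to go through cleanly once the ranks are matched; if $\rk(P') < \rk(P)$ one simply notes the top operators $d_k$ for large $k$ act trivially on antichains supported in $P'$, so no contradiction arises. I would also record, as an immediate corollary combined with \cref{prop:rvac_ranks} and induction, the consistency of these formulas with $\#A + \#\Rvac(A)$ behaving additively under this ``parabolic restriction.''
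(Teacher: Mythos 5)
Your treatment of the first bullet point is essentially the paper's argument and is fine in substance: since $p\notin A$ and $p$ is minimal, the very first rank toggle $\boldsymbol{\tau}_0$ inserts $p$, the toggle $\tau_p$ never occurs again, so $p$ stays put and blocks every $q\geq p$ from ever being added, and the remaining toggles at elements of $P'$ (note that these include the toggles at $P_0\setminus\{p\}$ sitting inside $P$'s bottom block, not only the blocks $d_k$ with $k\geq 1$) reproduce exactly the toggle string defining $\Rvac_{P'}$. One small but real slip: with the paper's conventions (rightmost factor acts first, as in the proof of \cref{prop:rvac_ranks}), $d_0$ is the \emph{first} operator applied, not the last; your second paragraph computes with the correct order, but the third paragraph asserts the opposite, and the bookkeeping only works with $\boldsymbol{\tau}_0$ acting first.

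The genuine gap is in your direct argument for the second bullet. You claim that after $\tau_p$ removes $p$, none of the later toggles ``add anything $\geq p$,'' but this is false: once $p$ is gone, elements strictly above $p$ are no longer blocked by $p$ and can perfectly well be toggled in (and later out). Already for $P$ a two-element chain $p<x$ and $A=\{p\}$, the computation of $\Rvac(A)=(\boldsymbol{\tau}_1)(\boldsymbol{\tau}_1\boldsymbol{\tau}_0)(A)$ passes through the intermediate antichain $\{x\}$ with $x>p$ before ending at $\varnothing=\Rvac_{P'}(A\setminus\{p\})$. Moreover, your justification (``the final answer $\Rvac_{P'}(A\setminus\{p\})$ lives in $P'$'') is circular, since the identity $\Rvac(A)=\Rvac_{P'}(A\setminus\{p\})$ is exactly what is being proved; nothing in the toggle string confines the intermediate antichains to $P'$ in this case. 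The clean fix is the paper's: write $A=\{p\}\cup A'$ with $A'\in\mathcal{A}(P')$, apply the first bullet to $B\coloneqq\Rvac_{P'}(A')$ to get $\Rvac(B)=\{p\}\cup A'=A$, and then use that $\Rvac$ and $\Rvac_{P'}$ are involutions (\cref{prop:row_basics}) to conclude $\Rvac(A)=B$. With that replacement for your second case, the proof goes through.
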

\begin{proof}
Let us prove the first bulleted item. If $A\subseteq P'$, then $p \not \in A$. Hence, when we carry out the rank toggles
\[(\boldsymbol{\tau}_{\rk(P)}) (\boldsymbol{\tau}_{\rk(P)} \boldsymbol{\tau}_{\rk(P)-1}) \cdots (\boldsymbol{\tau}_{\rk(P)} \boldsymbol{\tau}_{\rk(P)-1} \cdots \boldsymbol{\tau}_1) (\boldsymbol{\tau}_{\rk(P)} \boldsymbol{\tau}_{\rk(P)-1} \cdots \boldsymbol{\tau}_0),\]
the application of $\boldsymbol{\tau}_0$ will add $p$ to $A$. From then on, no $q \geq p$ can be added to $A$. Therefore, the effect of carrying out this sequence of rank toggles will be the same as if we carried them out just on $P'$, except that we also have to add $p$. This is exactly what the equality $\Rvac(A) = \{p\} \cup \Rvac_{P'}(A)$ asserts.

For the second item: if $p\in A$, then $A$ is of the form $A=\{p\}\cup A'$ for some antichain $A' \subseteq P'$. So this item actually follows from the first item and the fact that $\Rvac$ is an involution (see Proposition~\ref{prop:row_basics}).
\end{proof}

\subsection{Panyushev's nonnesting partition duality conjecture} \label{subsec:pan_conj}

We now give a more precise version of~\cref{conj:pan_intro}.

\begin{conj}[{Panyushev~\cite[Conjecture~6.1]{panyushev2004adnilpotent}}] \label{conj:pan}
There is an involution on the set of nonnesting partitions $\mathfrak{P}\colon\mathcal{A}(\Phi^+)\to\mathcal{A}(\Phi^+)$ satisfying the following properties. First of all, if $\Phi=\Phi'\sqcup\Phi''$ is reducible, then $\mathfrak{P}(A)=\mathfrak{P}_{\Phi'}(A\cap(\Phi')^+)\cup\mathfrak{P}_{\Phi''}(A\cap(\Phi'')^+)$ for all~$A \in \mathcal{A}(\Phi^+)$. If $\Phi$ is irreducible, then for all~$A\in \mathcal{A}(\Phi^+)$, we have:
\begin{enumerate}[(i)]
\item $\#A+\#\mathfrak{P}(A)=r$;
\item the distribution of long and short roots in the multiset union $A\cup \mathfrak{P}(A)$ is the same as in the set of simple roots~$\{\alpha_1,\ldots,\alpha_r\}$;
\item if $A = \Phi^+_i$, then $\mathfrak{P}(A)=\Phi^+_{h-1-i}$ for all $i=0,\ldots,h-1$;
\item if $\alpha \in A$ for a simple root $\alpha$, then $\mathfrak{P}(A)=\mathfrak{P}_{\Phi'}(A\setminus \{\alpha\})$, where $\Phi'\subseteq \Phi$ is the maximal parabolic sub-root system of $\Phi$ obtained by removing $\alpha$ from the system of simple roots;
\item if $A \subseteq (\Phi')^+$, where $\Phi'\subseteq \Phi$ is the maximal parabolic sub-root system of $\Phi$ obtained by removing some simple root $\alpha$, then $\mathfrak{P}(A)=\{\alpha\}\cup\mathfrak{P}_{\Phi'}(A)$.
\end{enumerate}
\end{conj}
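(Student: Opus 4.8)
Although \cref{conj:pan} is out of reach for the exceptional root systems --- as noted in \cref{sec:intro}, $\Rvac$ already violates property~(i) when $\Phi$ is $E_6$, $E_7$, $E_8$, or $F_4$, while for $G_2$ the involution $\mathfrak P$ is forced --- we will establish it with $\mathfrak P=\Rvac$ whenever $\Phi$ is of classical type; this is \cref{thm:main_intro}, and here is the plan. First I would reduce to $\Phi$ irreducible: the reducibility clause holds for $\Rvac$ because on a disjoint union of ranked posets $\Rvac$ factors as the (commuting) product of the $\Rvac$'s on the components, which is immediate from the rank-toggle description. Properties (iii), (iv), (v) then come essentially for free from the results of \cref{sec:posets}. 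Property~(iii) is \cref{prop:rvac_ranks} combined with $\rk(\Phi^+)=h-2$. For (iv) and (v), note that $\beta\geq\alpha_i$ in $\Phi^+$ if and only if $\alpha_i\in\supp(\beta)$, so the subposet $\{q\in\Phi^+\colon q\not\geq\alpha_i\}$ appearing in \cref{prop:rvac_parabolic} (with $p=\alpha_i$) is exactly the positive root poset of the maximal standard parabolic $\Phi'$ obtained by deleting $\alpha_i$; under this identification the two bullet points of \cref{prop:rvac_parabolic} become precisely (v) and (iv). Feeding (iv) and (v) into an induction on $r$ (invoking the reducibility clause, since deleting a simple root can disconnect the Dynkin diagram) further reduces the verification of (i) and (ii) to antichains $A$ that contain no simple root and satisfy $\bigcup_{\beta\in A}\supp(\beta)=\{\alpha_1,\dots,\alpha_r\}$.

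In Type~A, properties (i) and (ii) are then immediate from the explicit Lalanne--Kreweras description of \cref{thm:lk}: it gives $\#A+\#\Rvac(A)=n-1=r$, and (ii) is vacuous as $\Phi^+(A_{n-1})$ is simply laced. Types~B and~C follow by folding, recovering Panyushev's treatment: $\Rvac$ commutes with the automorphism $\eta=-w_0$ of $\Phi^+(A_{2n-1})$, hence preserves its $\eta$-symmetric antichains, which $\iota$ identifies with $\mathcal A(\Phi^+(C_n))$; counting elements through $\iota$ (a short root of $C_n$ unfolds to a two-element $\eta$-orbit, a long root to a singleton on the crease) and using \cref{thm:lk} to check that exactly one long root occurs in $A\cup\Rvac(A)$ --- property~(ii) for $C_n$, matching the single long simple root $2e_n$ --- the folding relation then upgrades the Type~A count to property~(i) for $C_n$.

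This leaves Type~D, which is the bulk of the work. Here the plan is to pass to the noncrossing world using the uniform \cref{thm:ast_intro}: composing it with $\Theta_W\cdot\row=\Krew\cdot\Theta_W$ gives $\Theta_W\cdot\Rvac=(\Krew\cdot\Flip)\cdot\Theta_W$, and since $\Krew(w)=cw^{-1}$ reverses rank while $\Flip(w)=gw^{-1}g^{-1}$ preserves it, this yields $\rk(\Theta_W(\Rvac(A)))=r-\rk(\Theta_W(A))$. It then remains only to match the antichain cardinality $\#A$ with the rank $\rk(\Theta_W(A))$ in Type~D, and for this I would use the Armstrong--Stump--Thomas explicit description of the bijection $\Theta_{D_n}$ (reviewed in \cref{Sec:ASTBijectionD}) together with the combinatorial ``bump diagram'' model of $\NC(D_n)$, in which $\Krew$ is a rotation and, as we show, $\Flip$ is reflection across a diameter; reading off how the cardinality of the nonnesting antichain manifests in this model should give $\#A+\#\Rvac(A)=n=r$, i.e.\ property~(i) (property~(ii) again being vacuous in the simply laced type~D). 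The folding $\gamma\colon\Phi^+(D_n)\to\Phi^+(C_{n-1})$, which $\Rvac$ likewise respects, disposes of the $\delta$-symmetric antichains by direct reduction to the now-known Type~C case (once one checks that exactly two elements of $\mathcal S_{D_n}$ lie in $A\cup\Rvac(A)$, matching the two simple roots in $\mathcal S_{D_n}$), and can serve as a cross-check on the rest.

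The hard part will be precisely this final Type~D step, where essentially all the intricate, type-specific case analysis lives. Unlike Type~A, there is no known closed formula for $\Rvac$ on $\Phi^+(D_n)$ analogous to \cref{thm:lk}, and Type~D noncrossing partitions are notoriously delicate: one must (a)~set up the bump-diagram model of $\NC(D_n)$ and pin down the correct involution $g$ defining $\Flip$, (b)~prove that $\Flip$ is reflection across a diameter in that model, and (c)~push the explicit bijection $\Theta_{D_n}$ through the identity $\Theta_W\cdot\Rvac=(\Krew\cdot\Flip)\cdot\Theta_W$ while tracking the antichain cardinality throughout. Steps (b) and (c) will require the delicate Type~D bookkeeping developed in \cref{Sec:ASTBijectionD,Sec:MainProof}. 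One should also confirm that $\Theta_W$ is compatible with passage to standard parabolic subsystems, so that the $\Flip$ argument stays consistent with the inductive reduction coming from (iv) and (v) --- but this compatibility is among the characterizing properties of $\Theta_W$ recorded in \cref{thm:ast}.
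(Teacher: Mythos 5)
Your reduction to the irreducible case, the treatment of properties (iii)--(v) via \cref{prop:rvac_ranks,prop:rvac_parabolic}, and the handling of Types~A, B, C through \cref{thm:lk} and folding all match what is actually done (this part is Panyushev's and Hopkins--Joseph's work, as in \cref{subsec:pan_conj}). The genuine gap is in your Type~D plan. You propose to deduce $\rk(\Theta_W(\Rvac(A)))=r-\rk(\Theta_W(A))$ from \cref{thm:ast_intro} (correct, since $\Krew$ is an anti-automorphism and $\Flip$ an automorphism of $\NC(W,c)$) and then to finish by ``matching the antichain cardinality $\#A$ with the rank $\rk(\Theta_W(A))$.'' That matching is false: the AST bijection does not carry cardinality to absolute-order rank. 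The base case already refutes it, since $\Theta_W(\{\alpha_i\colon i\in L\})=e$ sends an antichain of cardinality $\#L$ to an element of rank $0$; likewise in \cref{exam:A2} a $6$-element antichain maps to a permutation of rank $5$. More structurally, \cref{thm:ast_intro} holds uniformly in all types, so if any such cardinality-to-rank transfer were available the identity $\#A+\#\Rvac(A)=r$ would follow in every type, contradicting the $F_4$ and $E$-type counterexamples of \cref{rem:exceptional}. So ``tracking the antichain cardinality'' through the noncrossing model cannot be the mechanism; the cardinality statistic is scrambled by $\Theta_W$ in a way that is exactly the hard, non-uniform part of the problem.

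What the paper does instead in Type~D is to use the explicit AST diagrams not to read off cardinalities on the noncrossing side, but only to prove an intertwining statement back on the nonnesting side: the ``hat'' map $A\mapsto\widehat A$ from non-$\delta$-symmetric antichains of $\Phi^+(D_n)$ to $\eta$-symmetric antichains of $\Phi^+(A_{2n-3})$ satisfies $\row\cdot\Rvac(\widehat A)=\widehat{\row\cdot\Rvac(A)}$ (\cref{lem:HatCommutes}, proved by showing both $\varphi_{A_{2n-3}}$- and $\varphi_{D_n}$-diagrams get reflected through the line $M$), hence $\Rvac(\widehat A)=\widehat{\Rvac(A)}$ when $\{\alpha_{n-1},\alpha_n\}\cap A=\varnothing$ (\cref{cor:HatCommutes}). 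The count $\#A+\#\Rvac(A)=n$ is then obtained purely on the nonnesting side from the known Type~A and Type~C results, with the discrepancies $\#\widehat A=2\#A-2+\varepsilon(\widehat A)$ controlled by the $\mathcal L$- and $\mathcal S$-statistics of \cref{prop:typec_dist,prop:typeb_dist}; the $\delta$-symmetric case reduces through $\gamma$ to Type~C (\cref{lem:delta(A)=A}, close to your side remark), and the case $\{\alpha_{n-1},\alpha_n\}\cap A\neq\varnothing$ reduces to Type~A by parabolic induction (\cref{lem:delta(A)neqA2}). You would need to replace your ``cardinality equals rank'' step with an argument of this kind; as written, the central step of your Type~D plan does not hold.
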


Remember that we are trying to show rowvacuation is Panyushev's~$\mathfrak{P}$: \cref{thm:main_intro} asserts that $\mathfrak{P}=\Rvac$ for the classical types. So let us check which of the properties in~\cref{conj:pan} rowvacuation satisfies. $\Rvac$ evidently respects the decomposition of a root system into its irreducible components, which at the level of root posets corresponds to the decomposition into connected posets. Furthermore, property~(iii) for $\Rvac$ is~\cref{prop:rvac_ranks}, and properties~(iv) and~(v) are~\cref{prop:rvac_parabolic}. So the only properties that we do not yet know $\Rvac$ satisfies are~(i) and~(ii). Of course, (i) is the property we really care about: it says that $\mathfrak{P}$ combinatorially exhibits the symmetry of the $W$-Narayana numbers.

In~\cite[\S4]{panyushev2004adnilpotent}, Panyushev showed that defining $\mathfrak{P}\colon\mathcal{A}(\Phi^+(A_{n-1}))\to\mathcal{A}(\Phi^+(A_{n-1}))$ using the formula in \cref{thm:lk} gives an involution satisfying the conditions of~\cref{conj:pan}. Hence, $\mathfrak{P}=\Rvac$ for Type~A. Panyushev~\cite[\S5.1]{panyushev2004adnilpotent} also showed that defining $\mathfrak{P}$ for Type~C by embedding it into the Type~A root poset via $\iota$ and using the Type~A definition of~$\mathfrak{P}$ also gives an involution satisfying~\cref{conj:pan}; so $\mathfrak{P}=\Rvac$ for Type C as well. And Panyushev~\cite[\S5.2]{panyushev2004adnilpotent} showed the same for Type~B; thus $\mathfrak{P}=\Rvac$ again for Type~B. Hence, the only remaining case of~\cref{thm:main_intro} is Type~D. As mentioned, only properties~(i) and~(ii) of~\cref{conj:pan} for $\Rvac$ are in doubt. In fact, since Type~D is simply laced, (ii) is vacuous because all roots in the root system are short roots. Consequently, all we have left to show is that $\#A+\#\Rvac(A)=n$ for all $A\in \mathcal{A}(\Phi^+(D_n))$.

Showing  that $\#A+\#\Rvac(A)=n$ for all $A\in \mathcal{A}(\Phi^+(D_n))$ will take up the remainder of the paper and require quite a lot of work. In particular, we will have to use the bijection of Armstrong--Stump--Thomas~\cite{armstrong2013uniform} between nonnesting and noncrossing partitions, which we discuss in the next section.

\begin{figure}
\begin{center}
\includegraphics[height=4.061cm]{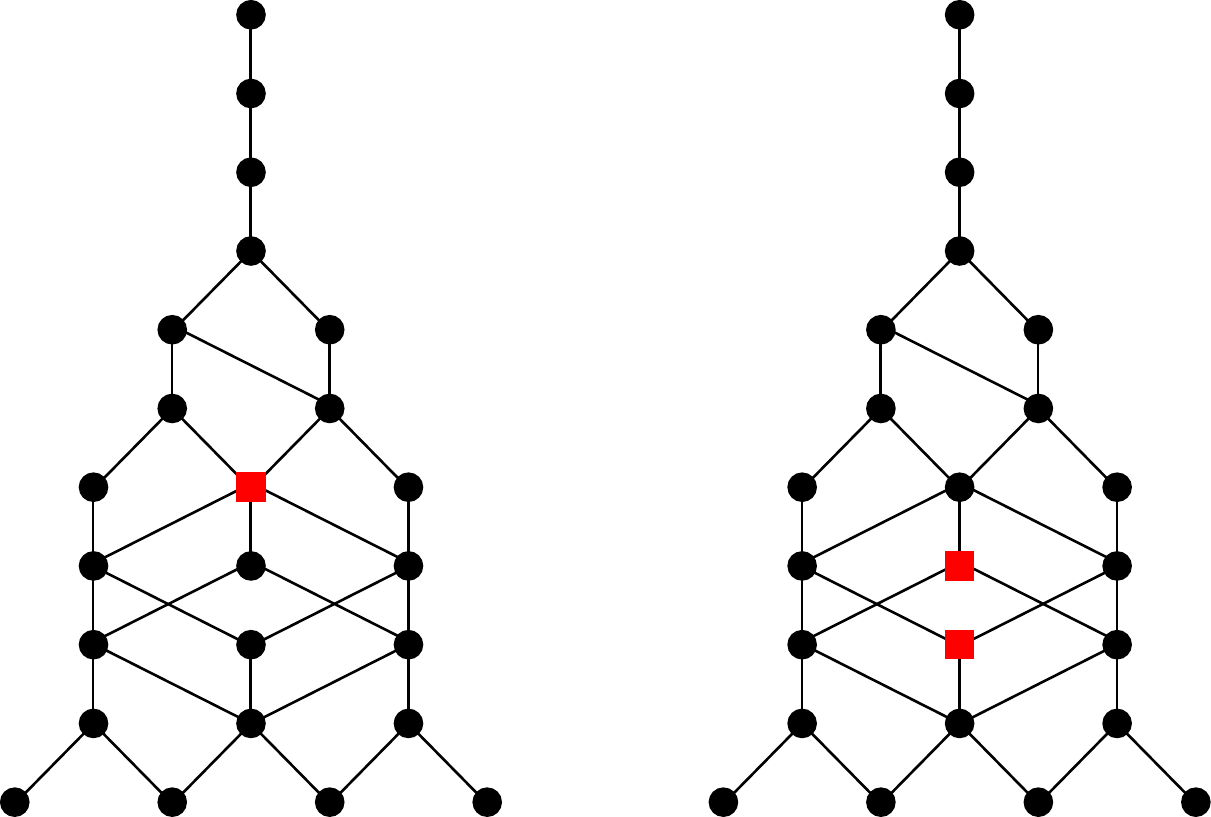}
\end{center}
\caption{An antichain $A\in \mathcal{A}(\Phi^+(F_4))$ (left) and its rowvacuation (right). In this example, $\#A +\#\Rvac(A)\neq 4$.} \label{fig:f4_counterexample}
\end{figure}

\begin{remark} \label{rem:exceptional}
As mentioned in \cref{sec:intro}, the identity $\#A +\#\Rvac(A)=r$ fails to hold when $\Phi$ is one of the exceptional types other than $G_2$. \Cref{fig:f4_counterexample} gives an example of such a failure for $F_4$; there are similar examples for $E_6$, $E_7$, and~$E_8$. It would be interesting to try to modify rowvacuation somehow to obtain Panyushev's desired involution~$\mathfrak{P}$ in the exceptional types.
\end{remark}

\begin{remark}
In~\cite[\S2.8]{hopkins2020birational}, it is explained that if there is a constant $c\in\mathbb{N}$ for which $\#A+\#\Rvac(A)=c$ for every antichain $A\in\mathcal{A}(P)$ of a ranked poset $P$, then necessarily $\frac{1}{\#O}\sum_{A \in O} \#A=\frac{c}{2}$ for every $\row $-orbit $O\subseteq \mathcal{A}(P)$ as well. Hence, \cref{thm:main_intro} implies the second item in \cref{thm:ast_row} for the classical types. However, since we use a lot of the machinery of~\cite{armstrong2013uniform} to prove \cref{thm:main_intro}, this does not really qualify as a new proof of this homomesy result.
\end{remark}

\section{Noncrossing partitions and the AST bijection}\label{sec:Flip}

In this section, we review the $W$-noncrossing partitions and then describe the Armstrong--Stump--Thomas~\cite{armstrong2013uniform} bijection between nonnesting and noncrossing partitions, which interacts very nicely with both rowmotion and rowvacuation.

\subsection{Noncrossing partitions}

We continue to fix a root system $\Phi$ in $V$ with Weyl group~$W\subseteq \GL(V)$. We use $s_i \in \GL(V)$ to denote the \dfn{simple reflection} corresponding to a simple root $\alpha_i$. Recall that $W$ is generated by $\{s_1,\ldots,s_r\}$. A \dfn{Coxeter element}~$c\in W$ is a product of all the simple reflections $s_1,\ldots,s_r$ in some order. From now on, fix a choice of a Coxeter element~$c$. The order of $c$ is, by definition, the Coxeter number~$h$. 

We use $T\subseteq W$ to denote the set of \dfn{reflections}, i.e., all $W$-conjugates of $s_1,\ldots,s_r$. The \dfn{absolute length} of an element $w\in W$, denoted~$\ell_T(w)$, is the minimum length of an expression for $w$ as a product of elements of $T$. The \dfn{absolute order} on $W$ is the partial order where $u \leq w$ if and only if~$\ell_T(w) = \ell_T(u) + \ell_T(u^{-1}w)$. The \dfn{identity element} $e$ is the minimal element of absolute order; the Coxeter elements form a subset of the maximal elements. The poset of \dfn{$W$-noncrossing partitions} $\NC(W,c)$ is defined to be the interval $[e,w] \subseteq W$ in absolute order between the identity element~$e$ and the Coxeter element $c$. For fixed $W$ and varying $c$, the posets $\NC(W,c)$ are isomorphic since all Coxeter elements are conjugate; this is why we just used the notation~$\NC(W)$ for this poset in~\cref{sec:intro}. But from now on, we use the notation $\NC(W,c)$ to emphasize the choice of $c$. 

The $W$-noncrossing partitions were first introduced by Brady and Watt~\cite{brady2002kpi1s} and independently Bessis~\cite{bessis2003dual}, following work of Reiner~\cite{reiner1997noncrossing} in the classical types (see also~\cite{athanasiadis2004noncrossing}). The poset $\NC(W,c)$ is ranked with rank function $\ell_T$, and its rank is $\rk(\NC(W,c))=\ell_T(c)=r$. It is known \cite{BradyWattNoncrossing} that $\NC(W,c)$ is always a lattice. Furthermore, $\NC(W,c)$ is self-dual. In fact, the \dfn{Kreweras complement} $\Krew\colon\NC(W,c)\to\NC(W,c)$ defined by $\Krew(w)\coloneqq cw^{-1}$ is an anti-automorphism of $\NC(W,c)$.

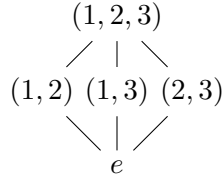
\begin{figure}
\begin{center}
\begin{tikzpicture}
\node (a) at (0,2) {$(1,2,3)$};
\node (b) at (-1,1) {$(1,2)$};
\node (c) at (0,1) {$(1,3)$};
\node (d) at (1,1) {$(2,3)$};
\node (e) at (0,0) {$e$};
\draw (a)--(b)--(e)--(c)--(a)--(d)--(e);
\end{tikzpicture}
\end{center}
\caption{The lattice of noncrossing partitions $\NC(\mathfrak{S}_3)$.} \label{fig:noncrossing_ex}
\end{figure}

Since we will work mostly with the classical types, let us review (some of) their Weyl groups and noncrossing partitions. The Weyl group $W(A_{n-1})$ of Type $A_{n-1}$ is isomorphic to the symmetric group $\mathfrak S_n$. When we view $W(A_{n-1})$ as $\mathfrak S_n$, the simple reflection $s_i$ is the simple transposition~$(i,i+1)$. A standard choice of Coxeter element is $c=s_1s_2\ldots s_{n-1} = (1,2,3,\ldots,n)$ (in cycle notation). \Cref{fig:noncrossing_ex} depicts the lattice of noncrossing partitions in $\mathfrak S_3$ for the standard choice of Coxeter element. Note that $\NC(\mathfrak{S}_n)$ is isomorphic to the classical lattice of noncrossing set partitions of $[n]$, with the Kreweras complement being the classical Kreweras complement of noncrossing set partitions. Meanwhile, the Weyl group $W(D_n)$ of Type~$D_n$ can be viewed as the group of permutations~$w$ of the set $(-[n])\cup[n]$ such that $w(-i)=-w(i)$ for all~$i\in [n]$ and for which the quantity $\#\{i\in[n]\colon w(i)<0\}$ is even. Viewing $W(D_n)$ as a group of permutations of~$(-[n])\cup[n]$, we have
\[ s_i = \begin{cases} (i,i+1)(-i,-i-1) &\textrm{if $1\leq i \leq n-1$}; \\
(n,-n+1)(n-1,-n) &\textrm{if $i=n$}.\end{cases}\]

\subsection{The AST bijection}

The Kreweras complement is usually \emph{not} an involution: it has order $h$ or $2h$. For instance, in Type~A, $\Krew^{2}$ corresponds to rotation of noncrossing set partitions. Hence, one might wonder about the orbit structure of $\Krew$. This is where the connection to Panyushev's work arises. After Panyushev had experimentally exhibited the remarkable properties of rowmotion acting on the root poset in~\cite{panyushev2009orbits}, Bessis and Reiner~\cite{bessis2011cyclic} conjectured that the orbit structure of $\row$ acting on~$\mathcal{A}(\Phi^+)$ is the same as the orbit structure of $\Krew$ acting on~$\NC(W,c)$. Armstrong, Stump, and Thomas~\cite{armstrong2013uniform} proved this conjecture of Bessis--Reiner by defining an explicit bijection between $\mathcal{A}(\Phi^+)$ and $\NC(W,c)$ that equivariantly maps rowmotion to Kreweras complement.

In order to describe their bijection, we need to assume that our Coxeter element is \dfn{bipartite}, i.e., that $c = c_L c_R$, where $L\sqcup R = [r]$ is a bipartition of the nodes of the Dynkin diagram of $\Phi$ and $c_X \coloneqq \prod_{i\in X} s_i$ (the products $c_L$ and $c_R$ are well-defined since the bipartite assumption guarantees that these simple reflections commute). Since the Dynkin diagram of $\Phi$ is always a tree, a bipartite Coxeter element exists. Because all Coxeter elements are conjugate, there is no loss of generality in assuming bipartiteness.

For $J\subseteq [r]$, the \dfn{parabolic subgroup} $W_J$ is the subgroup of $W$ generated by the simple reflections $s_i$ for $i \in J$. If $c_Lc_R$ is a bipartite Coxeter element of $W$, then $c_{L'}c_{R'}$ is a bipartite Coxeter element of $W_J$, where $L' \coloneqq L\cap J$ and $R' \coloneqq R\cap J$; we have a natural inclusion $\NC(W_J,c_{L'}c_{R'})\subseteq \NC(W,c_Lc_R)$. Meanwhile, we use $\Phi^+_J$ to denote the \dfn{parabolic root poset}, which is  $\Phi^+_J \coloneqq \{\alpha \in \Phi^+\colon\alpha \not \geq \alpha_i \textrm{ for all $i\in [r]\setminus J$}\}$.\footnote{We hope the reader can distinguish our notation for the parabolic root poset $\Phi^+_J$ from our notation for the ranks $\Phi^+_i$ of the root poset $\Phi^+$ via context.} For an antichain $A \in \mathcal{A}(\Phi^+)$, we define its \dfn{support} to be 
\[\supp(A) \coloneqq \{i \in [r]\colon \alpha_i \leq \alpha \textrm{ for some $\alpha\in A$}\}.\] 
We can view any antichain $A \in \mathcal{A}(\Phi^+)$ as also an antichain in $\mathcal{A}(\Phi_{\supp(A)}^+)$.

With all this notation in hand, we can now describe the Armstrong--Stump--Thomas nonnesting-to-noncrossing bijection that sends rowmotion to Kreweras complement. As we will see, it is defined inductively, so it is important that we allow the possibility that $\Phi^+$ is reducible, as we have been doing throughout.

\begin{thm}[{Armstrong--Stump--Thomas~\cite{armstrong2013uniform}}] \label{thm:ast}
For a fixed bipartition $L\sqcup R = [r]$ of the Dynkin diagram of $\Phi$, there is a unique bijection $\Theta_{W}\colon \mathcal{A}(\Phi^+)\xrightarrow{\sim} \NC(W,c_Lc_R)$ for which:
\begin{itemize}
\item \emph{(base case)} $\Theta_W(\{\alpha_i\colon i \in L\})=e$;
\item \emph{(equivariance)} $\Theta_W \cdot \row= \Krew\cdot \Theta_W$;
\item \emph{(parabolic induction)} $\Theta_W(A) = \prod_{i \in L\setminus \supp(A)} s_i \cdot \Theta_{W_{\supp(A)}}(A)$ for $A \!\in \mathcal{A}(\Phi^+)$, where $ \Theta_{W_{\supp(A)}}(A) \in \NC(W_{\supp(A)},c_{L\,\cap\, \supp(A)}c_{R\,\cap\, \supp(A)})\subseteq \NC(W,c_Lc_R)$.
\end{itemize}
\end{thm}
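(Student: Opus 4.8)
The plan is to prove \cref{thm:ast} by establishing \emph{uniqueness} and \emph{existence} separately, with uniqueness being the easier half. For uniqueness I would induct on the rank $r=\rk(W)$, treating all Weyl groups at once (when $\Phi$ is reducible everything factors as a direct product over the irreducible components, so one may assume $\Phi$ irreducible). Suppose $\Theta$ and $\Theta'$ both satisfy the three listed properties. If $A\in\mathcal{A}(\Phi^+)$ has $\supp(A)\subsetneq[r]$, then parabolic induction writes $\Theta(A)$ and $\Theta'(A)$ in terms of $\Theta_{W_{\supp(A)}}(A)$ and $\Theta'_{W_{\supp(A)}}(A)$ for a parabolic $W_{\supp(A)}$ of rank $<r$, and these agree by the inductive hypothesis. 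If instead $\supp(A)=[r]$, I would use equivariance: supposing the $\row$-orbit of $A$ contains an antichain $B$ of non-full support, write $A=\row^k(B)$, so that $\Theta(A)=\Krew^k(\Theta(B))=\Krew^k(\Theta'(B))=\Theta'(A)$.

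Thus uniqueness reduces to a structural lemma about rowmotion that I would isolate and prove on its own: \emph{every $\row$-orbit on $\mathcal{A}(\Phi^+)$ contains an antichain of non-full support}. I expect to obtain this from a direct analysis of rowmotion, e.g.\ via its toggle description $\row=\boldsymbol{\tau}_{\rk(\Phi^+)}\cdots\boldsymbol{\tau}_0$, tracking how the support of an antichain evolves. A compatibility check in the same spirit: for a bipartite Coxeter element $c=c_Lc_R$ one has $\row(\{\alpha_i:i\in L\})=\{\alpha_i:i\in R\}$ — a height count shows no non-simple root can be minimal in the relevant complement — so the base-case antichain lies in a $\row$-orbit of size at most $2$, consistent with $\Krew^2(e)=e$.

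For existence, the three properties really amount to a recursive recipe for $\Theta_W$: set $\Theta_W(\{\alpha_i:i\in L\})=e$ and $\Theta_W(\{\alpha_i:i\in R\})=\Krew(e)=c$; on antichains of non-full support apply the parabolic-induction formula using the inductively built bijections for proper parabolics; and on the remaining full-support antichains propagate along $\row$-orbits using equivariance. The real content is that this recipe is \emph{consistent} — single-valued whenever an antichain can be reached in more than one way, in particular whenever two non-full-support antichains lie in a common $\row$-orbit, in which case the values assigned to them by parabolic induction must be related by the appropriate power of $\Krew$ — and that the resulting map is a bijection onto $\NC(W,c)$. Proving consistency requires a precise understanding of how rowmotion on $\mathcal{A}(\Phi^+)$ interacts with support and with the parabolic sub-posets $\Phi^+_J$.

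I do not expect a fully uniform treatment of existence to be available; the route I would actually take — and the one used in the literature — is to exhibit an explicit combinatorial bijection in each Cartan--Killing type and check that it satisfies the base case, equivariance, and parabolic induction, after which it must be $\Theta_W$ by uniqueness. In type A this uses the standard dictionary among Dyck paths, nonnesting partitions, and noncrossing set partitions, under which $\Krew$ becomes the classical Kreweras complement and $\row$ a rotation; types B and C then follow by folding; type D needs a more delicate combinatorial model of the noncrossing partitions; and the exceptional types can be handled by direct computation. The main obstacle, I anticipate, is verifying the equivariance identity $\Theta_W\cdot\row=\Krew\cdot\Theta_W$ for the explicit type-D bijection — type D being the usual source of trouble — together with the proof of the orbit lemma invoked in the uniqueness argument; both are intrinsically combinatorial, and I see no way to avoid them.
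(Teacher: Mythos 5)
This statement is not proved in the paper at all: \cref{thm:ast} is imported as a black box from Armstrong--Stump--Thomas, and the only place the paper touches its proof is the recap (inside the proof of \cref{thm:ast_intro}) of the inductive recipe for computing $\Theta_W$, which rests on exactly the fact you isolate, namely that every $\row$-orbit contains an antichain of non-full support. Your outline does faithfully reconstruct the strategy of the original AST proof: uniform uniqueness by induction on rank via parabolic induction plus equivariance, and existence by explicit type-by-type constructions (type A via noncrossing matchings, B/C by folding, a more delicate model in type D, exceptional types by computation), with the explicit maps then verified against the three defining properties. Your uniqueness reduction is correct as far as it goes, and your small consistency check $\row(\{\alpha_i\colon i\in L\})=\{\alpha_i\colon i\in R\}$ is right (bipartiteness is what kills the height-two candidates).

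But as submitted this is a plan, not a proof, and the two places where all the content lives are left open. First, the orbit-support lemma is only asserted, and your hope that it falls out of ``tracking how the support evolves'' under the toggle description $\row=\boldsymbol{\tau}_{\rk(\Phi^+)}\cdots\boldsymbol{\tau}_0$ is optimistic: full-support antichains are plentiful (e.g.\ every rank $\Phi^+_i$ with $i\geq 1$ small has full support in irreducible types), orbits can remain at full support for many steps, and no simple monotonicity of $\supp$ under rank toggles is available; in AST this statement is genuinely intertwined with their detailed analysis rather than a soft consequence of the toggle formula. Second, the existence half --- well-definedness of the recursive recipe and bijectivity, equivalently the construction and equivariance verification of the explicit bijections, above all in type D --- is precisely the bulk of the AST paper and is deferred entirely in your write-up (you acknowledge this). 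So the approach is the right one and matches the literature, but the proposal as it stands establishes neither the key lemma behind uniqueness nor any part of existence, and both would need to be supplied before this could count as a proof of \cref{thm:ast}.
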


\begin{remark}
Armstrong--Stump--Thomas~\cite{armstrong2013uniform} used \cref{thm:ast} to prove~\cref{thm:ast_row}. But also, affirming a conjecture of Bessis and Reiner~\cite{bessis2011cyclic}, they used \cref{thm:ast} to show that $\row$ acting on $\mathcal{A}(\Phi^+)$ satisfies a \dfn{cyclic sieving phenomenon}~\cite{reiner2004cyclic}, where the sieving polynomial is a natural $q$-analogue of~$\Cat(W)$. Recently there has been interest in extending sieving phenomena to dihedral group actions as well~\cite{rao2020dihedral, stier2020dihedral, hopkins2020cyclic}. Hence, it might be interesting to explore sieving phenomenona for $\langle \row, \Rvac \rangle$ acting on $\mathcal{A}(\Phi^+)$.
\end{remark}

\subsection{Flip}

Now that we have stated the Armstrong--Stump--Thomas bijection, we want to bring rowvacuation into the story. In other words, we need an involution on the set of noncrossing partitions of $\Phi$ that plays the role that rowvacuation plays for the nonnesting partitions. Since Kreweras complement is a kind of ``rotation,'' and this hypothetical involution on noncrossing partitions ought to generate a dihedral action with Kreweras complement, we will refer to it as \dfn{flip}. As before, we use the bipartite Coxeter element $c=c_Lc_R$. We define $\Flip\colon \NC(W,c_Lc_R)\to \NC(W,c_Lc_R)$ by $\Flip(w) \coloneqq c_L w^{-1} c_L^{-1}$. Note that since $c_L$ is an involution, $\Flip$ is an involution. It fixes $c$ and permutes the set of reflections, so it is an automorphism of $\NC(W,c_Lc_R)$. It is also easily seen that 
\begin{equation}\label{eqFlipKrew}
\Flip\cdot \Krew=\Krew^{-1}\cdot\Flip.
\end{equation}

\begin{remark}
For non-bipartite $c$, we can define $\Flip\colon\NC(W,c)\to \NC(W,c)$ by conjugating to a bipartite~$c$. In fact, it is not hard to show that the $g\in W$ that conjugate $c$ to $c^{-1}$ form a (left- and right-) $\langle c \rangle$-coset and that they are all necessarily involutions. (The one thing needed to prove this is the well-known fact that the centralizer of $c$ in $W$ is $\langle c \rangle$.) When $h$ is odd, this $\langle c \rangle$-coset is a single $\langle c \rangle$-conjugacy class; when $h$ is even, it consists of two $\langle c \rangle$-conjugacy classes.
\end{remark}

\begin{remark}
Extending the previous remark, we note there are other involutive automorphisms of $\NC(W,c_Lc_R)$ such as the map $\Flip'\colon \NC(W,c_Lc_R)\to \NC(W,c_Lc_R)$ defined by $\Flip'(w) \coloneqq c_R w^{-1} c_R^{-1}$ and others obtained by conjugating $\Flip$ or $\Flip'$ by powers of~$c$. These various involutive automorphisms of $\NC(W,c_Lc_R)$ were previously considered, for instance, by Armstrong in~\cite[\S4.3.4]{armstrong2009generalized}. Our focus on $\Flip$ is ultimately a matter of convention.
\end{remark}

\begin{remark}
That $c$ is conjugate by an involution to its inverse is not a special property of Coxeter elements; Carter~\cite[Theorem~C(iii)]{carter1972conjugacy} proved that \emph{every} element of a Weyl group is conjugate by an involution to its inverse.
\end{remark}

With this definition of~$\Flip$, we can upgrade the equivalence of cyclic actions in \cref{thm:ast} to an equivalence of dihedral actions essentially ``for free,'' using just the general properties of the bijection listed in that theorem. This is what we have asserted in~\cref{thm:ast_intro}:
\[\Theta_W \cdot (\row^{-1}\cdot \Rvac) = \Flip\cdot \Theta_W.\]
The reason that we need to use $\row^{-1}\cdot \Rvac$ instead of just $\Rvac$ in \cref{thm:ast_intro} is because sometimes there are no $A \in \mathcal{A}(\Phi^+)$ fixed by $\Rvac$, while there will always be some fixed points of $\Flip$ (e.g.,~$e$ and~$c$).

In order to prove \cref{thm:ast_intro}, we need to show that $\row^{-1}\cdot \Rvac$ behaves well with respect to parabolic induction. In fact, this is true for any poset $P$. Namely, for an antichain $A \in \mathcal{A}(P)$, slightly abusing notation, let us define its \dfn{support} to be $\supp(A)\coloneqq \{p \in P_0\colon p\leq q \textrm{ for some $q\in A$}\}$. For a subset $X\subseteq P_0$, define $P_{X} \coloneqq \{q \in P\colon q \not\geq p \textrm{ for all $p \in P_0\setminus X$}\}$. We can view any antichain $A \in \mathcal{A}(P)$ as an antichain in $\mathcal{A}(P_{\supp(A)})$.  We let $(\row^{-1}\cdot \Rvac)_{\supp(A)}(A)$ denote the image of~$A$ under $\row^{-1}\cdot \Rvac\colon \mathcal{A}(P_{\supp(A)})\to \mathcal{A}(P_{\supp(A)})$.

\begin{lemma} \label{lem:par_ind}
For any $P$ and $A \in \mathcal{A}(P)$, we have:
\begin{itemize}
\item $\supp((\row^{-1}\cdot \Rvac)(A)) = \supp(A)$;
\item $\row^{-1}\cdot \Rvac(A) = (\row^{-1}\cdot \Rvac)_{\supp(A)}(A)$.
\end{itemize}
\end{lemma}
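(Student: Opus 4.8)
The plan is to reduce to the parabolic poset $P_{\supp(A)}$ by peeling off, one at a time, the minimal elements of $P$ that lie outside $\supp(A)$. The structural fact I use throughout is that for any $X\subseteq P_0$ the subset $P_X$ is an order ideal of $P$ whose set of minimal elements is exactly $X$, so that $P$ and $P_X$ share a rank function and $\row_{P_X}$, $\Rvac_{P_X}$ make sense; moreover $P\setminus P_X$ is the order filter generated by $P_0\setminus X$, so while any $p\in P_0\setminus X$ belongs to an antichain no element $\geq p$ can be inserted by a toggle.

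The single peeling step is this: fix a minimal element $p\in P_0$, put $P'\coloneqq\{q\in P\colon q\not\geq p\}$, and suppose $p\notin\supp(A)$ (equivalently $A\subseteq P'$). The first bullet of \cref{prop:rvac_parabolic} gives $\Rvac_P(A)=\{p\}\cup\Rvac_{P'}(A)$. For rowmotion I would prove, working directly from the toggle formula $\row^{-1}_P=\boldsymbol{\tau}_0\boldsymbol{\tau}_1\cdots\boldsymbol{\tau}_{\rk(P)}$ with the top-rank toggle applied first, the companion identity $\row^{-1}_P(\{p\}\cup B)=\row^{-1}_{P'}(B)$ for all $B\in\mathcal{A}(P')$: the element $p$ stays in the antichain untouched by $\boldsymbol{\tau}_i$ for $i\geq1$, and because $p$ generates the principal filter $P\setminus P'$ nothing outside $P'$ ever enters during those toggles, so for $i\geq1$ the toggle $\boldsymbol{\tau}_i$ on $P$ restricts to $\boldsymbol{\tau}_i$ on $P'$ (and is trivial once $i>\rk(P')$); finally the bottom toggle $\boldsymbol{\tau}_0$ on $P$ deletes $p$ and reinserts $(P')_0\setminus\supp_{P'}(\widetilde{B})$, which one checks is precisely the bottom toggle of $P'$ applied to $\widetilde{B}$, where $\widetilde{B}$ is the state reached after the rank-$\geq1$ toggles. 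Combining the two identities and noting $\supp_{P'}(A)=\supp_P(A)$, we get $(\row^{-1}\cdot\Rvac)_P(A)=\row^{-1}_{P'}(\Rvac_{P'}(A))=(\row^{-1}\cdot\Rvac)_{P'}(A)$. I expect this toggle bookkeeping for $\row^{-1}$ — in particular verifying the identity $\boldsymbol{\tau}_0\bigl(\{p\}\cup\widetilde{B}\bigr)=\boldsymbol{\tau}_0(\widetilde{B})$ relating the bottom toggles of $P$ and $P'$ — to be the main (if routine) obstacle.

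Iterating the peeling step over all of $P_0\setminus\supp(A)$ gives $(\row^{-1}\cdot\Rvac)(A)=(\row^{-1}\cdot\Rvac)_{P_{\supp(A)}}(A)$, which is the second bullet. For the first bullet I would first observe that $f\coloneqq\row^{-1}\cdot\Rvac$ is an involution: from $\Rvac\cdot\row=\row^{-1}\cdot\Rvac$ (\cref{prop:row_basics}) one deduces $\Rvac\cdot\row^{-1}=\row\cdot\Rvac$, whence $f^2=\row^{-1}(\Rvac\cdot\row^{-1})\Rvac=\row^{-1}(\row\cdot\Rvac)\Rvac=\Rvac^2=\mathrm{id}$. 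By the second bullet $f(A)=f_{P_{\supp(A)}}(A)$ is an antichain of the order ideal $P_{\supp(A)}$, so $\supp(f(A))\subseteq(P_{\supp(A)})_0=\supp(A)$; applying this with $f(A)$ in place of $A$ and using $f^2=\mathrm{id}$ gives the reverse inclusion, hence $\supp(f(A))=\supp(A)$.
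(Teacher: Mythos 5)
Your proof is correct, but it is organized differently from the paper's. For the second bullet, the paper works with the single composite toggle word for $\row^{-1}\cdot\Rvac$ on all of $P$ at once: the first $\boldsymbol{\tau}_0$ (from the $\Rvac$ half) inserts every element of $P_0\setminus\supp(A)$, these sit as blockers preventing anything outside $P_{\supp(A)}$ from entering until the final $\boldsymbol{\tau}_0$ (from the $\row^{-1}$ half) deletes them, and meanwhile they do not interfere with toggles at elements of $P_{\supp(A)}$ --- so the whole word restricts to $P_{\supp(A)}$ in one stroke. You instead factor the operator: you reuse \cref{prop:rvac_parabolic} for the $\Rvac$ half, prove the companion restriction identity $\row^{-1}_P(\{p\}\cup B)=\row^{-1}_{P'}(B)$ for the $\row^{-1}$ half (your toggle bookkeeping here is sound; the only imprecision is that the bottom toggle both deletes the rank-$0$ elements of $\widetilde{B}$ and inserts $(P')_0\setminus\supp_{P'}(\widetilde{B})$, which is what identifies it with $\boldsymbol{\tau}_0^{P'}(\widetilde{B})$, as you indicate), and then peel off the minimal elements of $P_0\setminus\supp(A)$ one at a time. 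The underlying mechanism --- rank-$0$ blockers outside the support shield the filter above them without affecting toggles inside the parabolic subposet --- is the same in both arguments; what your version buys is modularity (a standalone parabolic-induction statement for $\row^{-1}$ itself, and explicit reuse of \cref{prop:rvac_parabolic}), at the cost of an extra lemma and a peeling induction, whereas the paper's global treatment is shorter precisely because neither factor $\Rvac$ nor $\row^{-1}$ alone preserves support, but their composite word conveniently begins and ends with $\boldsymbol{\tau}_0$. Your derivation of the first bullet (deduce $\subseteq$ from the second bullet, then use that $\row^{-1}\cdot\Rvac$ is an involution, which you correctly extract from \cref{prop:row_basics}) is the same as the paper's.
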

\begin{proof}
The argument is very similar to the proof of~\cref{prop:rvac_parabolic}. Let us prove the second bulleted item first. We write $\row^{-1}\cdot \Rvac$ as
\[(\boldsymbol{\tau}_{0} \cdot \boldsymbol{\tau}_{1} \cdots \boldsymbol{\tau}_{\rk(P)-1} \cdot \boldsymbol{\tau}_{\rk(P)})  \cdot (\boldsymbol{\tau}_{\rk(P)}) \cdot (\boldsymbol{\tau}_{\rk(P)} \cdot \boldsymbol{\tau}_{\rk(P)-1}) \cdots (\boldsymbol{\tau}_{\rk(P)} \cdot \boldsymbol{\tau}_{\rk(P)-1} \cdots \boldsymbol{\tau}_{1} \cdot \boldsymbol{\tau}_{0}). \]
When we apply the first $\boldsymbol{\tau}_0$ to $A$, it will add to our antichain all of $P_0\setminus \supp(A)$, and until those elements are removed by the $\boldsymbol{\tau}_0$ at the end of this sequence of toggles (which they will be), no element of $P\setminus P_{\supp(A)}$ can be toggled into our antichain. Meanwhile, the status of elements of $P_0\setminus \supp(A)$ has no effect on the toggles~$\tau_p$ for $p\in P_{\supp(A)}$. So indeed, applying the above sequence of toggles to $A$ will have the same effect if we only apply the toggles $\tau_p$ for $p \in P_{\supp(A)}$. This proves the second bulleted item.

It follows from the previous paragraph that $\supp((\row^{-1}\cdot \Rvac)(A)) \subseteq \supp(A)$. But since $(\row^{-1}\cdot \Rvac)^2$ is the identity, we also have the containment $\supp(A)\subseteq \supp((\row^{-1}\cdot \Rvac)(A))$, thus proving the first bulleted item as well.
\end{proof} 

\begin{proof}[Proof of \cref{thm:ast_intro}]
In showing that the bijection $\Theta_{W}\colon \mathcal{A}(\Phi^+)\xrightarrow{\sim} \NC(W,c_Lc_R)$ is uniquely defined by the listed properties, Armstrong, Stump, and Thomas~\cite{armstrong2013uniform} explained that for every $A \in\mathcal{A}(\Phi^+)$, there is a $k \geq 0$ so that $\supp(\row ^{k}(A)) \neq[r]$. Hence, the bijection can be computed inductively as follows: we let $A' \coloneqq \row ^{k}(A)$, where $k\geq 0$ is minimal so that $\supp(A') \neq [r]$; we then compute $w' \coloneqq \Theta_{W_{\supp(A')}}(A')$ inductively; finally, we set $\Theta_{W}(A) \coloneqq \Krew^{-k}(\prod_{i \in L\setminus \supp(A') }s_i \cdot w')$. The base case of this induction is where we use the condition $\Theta_W(\{\alpha_i\colon i \in L\})=e$.\footnote{Actually, we could take as our base case the unique bijection $\Theta_{W}\colon \mathcal{A}(\Phi^+)\xrightarrow{\sim} \NC(W,c)$ for $W$ the trivial group, which shows that the first item in~\cref{thm:ast} is not really needed.}

Thus, to prove this theorem, it suffices to show that:
\begin{itemize}
\item as a base case, $\Theta_W ( (\row^{-1}\cdot \Rvac)(\{\alpha_i\colon i \in L\})) = \Flip(e)$; 
\item if $\Theta_W( (\row^{-1}\cdot \Rvac)(A)) = \Flip(\Theta_W(A))$, then 
\[\Theta_W((\row^{-1}\cdot \Rvac)(\row^{-1}(A))) = \Flip(\Krew^{-1} (\Theta_W(A)));\]
\item if $\Theta_{W_{\supp(A)}}( (\row^{-1}\cdot \Rvac)_{\supp(A)} (A)) = \Flip_{W_{\supp(A)}}(\Theta_{W_{\supp(A)}}(A) )$, then 
\[\Theta_W((\row^{-1}\cdot \Rvac)(A)) = \Flip({\textstyle\prod_{i \in L\setminus \supp (A)}} s_i \cdot \Theta_{W_{\supp(A)}}(A))=\Flip(\Theta_W(A)).\]
\end{itemize}

The first bulleted item is clear since 
\[(\row^{-1}\cdot \Rvac)(\{\alpha_i\colon i \in L\})=\row^{-1}(\{\alpha_i\colon i \in R\}) =\{\alpha_i\colon i \in L\}\] 
and $\Flip(e)=e$.

For the second bulleted item, we use \cref{prop:row_basics} and \eqref{eqFlipKrew} to see that
\begin{align*}
\Theta_W((\row^{-1}\cdot \Rvac) (\row^{-1}(A))) &=  \Theta_W((\row\cdot \row^{-1} \Rvac)(A)) \\
&= \Krew (\Theta_W((\row^{-1}\cdot \Rvac)(A))) \\
&= \Krew (\Flip(\Theta_W(A))) =  \Flip( \Krew^{-1}(\Theta_W(A))). 
\end{align*}

For the third bulleted item, we use \cref{lem:par_ind} and the fact that, by definition, $\Theta_{W_{\supp(A)}}(A) = \prod_{i \in L\setminus \supp(A)} s_i \cdot \Theta_W (A)$ to see that 
\begin{align*}
\Theta_W((\row^{-1}\cdot \Rvac)(A))  &= \Theta_W((\row^{-1}\cdot \Rvac)_{\supp(A)}(A)) \\
&= \prod_{i \in L\setminus \supp(A)} s_i  \cdot \Theta_{W_{\supp(A)}}( (\row^{-1}\cdot \Rvac)_{\supp(A)} (A)) \\
&= \prod_{i \in L\setminus \supp(A)} s_i  \cdot \Flip_{W_{\supp(A)}}(\Theta_{W_{\supp(A)}}(A) ) \\
&= \prod_{i \in L\setminus \supp(A)} s_i  \cdot \Flip_{W_{\supp(A)}} \left(\prod_{i \in L\setminus \supp(A)} s_i \cdot \Theta_W (A) \right)\\
&=\prod_{i \in L\setminus \supp(A)} s_i \cdot c_{L \cap \supp(A)} \cdot \Theta_W (A)^{-1} \cdot \hspace{-15pt} \prod_{i \in L\setminus \supp(A)} \hspace{-10pt} s_i \cdot c_{L \cap \supp(A)} \\
&= c_L (\Theta_W (A)^{-1}) c_L \\
&= \Flip(\Theta_W (A)). &\qedhere
\end{align*}
\end{proof}

Armstrong, Stump, and Thomas proved \cref{thm:ast} by explicitly constructing the map $\Theta_W$ in each of the classical types. In the next two sections, we review their constructions in Types~A and~D. Thanks to \cref{thm:ast_intro}, these explicit constructions will help us understand the effect that $\Rvac$ has on an antichain $A\in\mathcal{A}(\Phi^+(D_n))$.

\section{The AST bijection in Type A}\label{Sec:ASTBijectionA}

In this section, we review the construction of the Armstong--Stump--Thomas bijection in Type~A. It uses noncrossing matchings. 

Let $A=\{[i_1,j_1],\ldots,[i_m,j_m]\} \in \mathcal{A}(\Phi^+(A_{n-1}))$, with $i_1< \cdots < i_m$. Place $2n$ vertices labeled $1^{(0)},2^{(0)},\ldots,n^{(0)},n^{(1)},\ldots,2^{(1)},1^{(1)}$ in clockwise order at evenly-spaced positions around a circle. For $1\leq \ell\leq m$, add a marker on the vertex $j_\ell^{(0)}$ with marking~$i_\ell$. Then, for each $i\in[n]\setminus\{i_1,\ldots,i_m\}$, add a marker on $i^{(1)}$ with marking~$i$. We now add $n$ (straight) edges to the configuration, one at a time. The~$i^\text{th}$ edge that we add connects the vertex with marking $i$ to the closest unmarked vertex that is not already connected to an edge. Here, ``closest'' is determined by moving counterclockwise from the vertex with marking $i$ if that vertex has a $(0)$ superscript and is determined by moving clockwise from that vertex if it has a $(1)$ superscript. The resulting diagram is denoted by $\psi_{A_{n-1}}(A)$. The markings are only used to help with the construction of $\psi_{A_{n-1}}(A)$; we do not consider them to be part of the diagram. 

\begin{figure}[ht]
\begin{center}
\includegraphics[height=3.61cm]{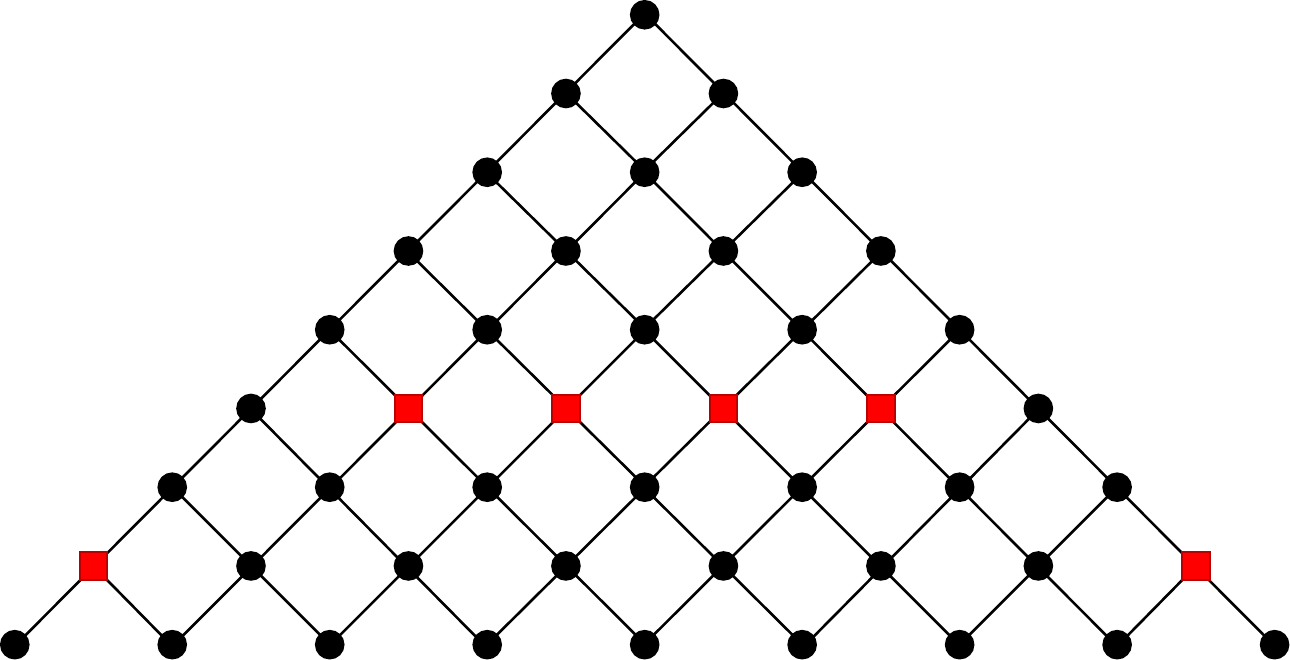}\\ \vspace{.4cm}
\includegraphics[height=5.5cm]{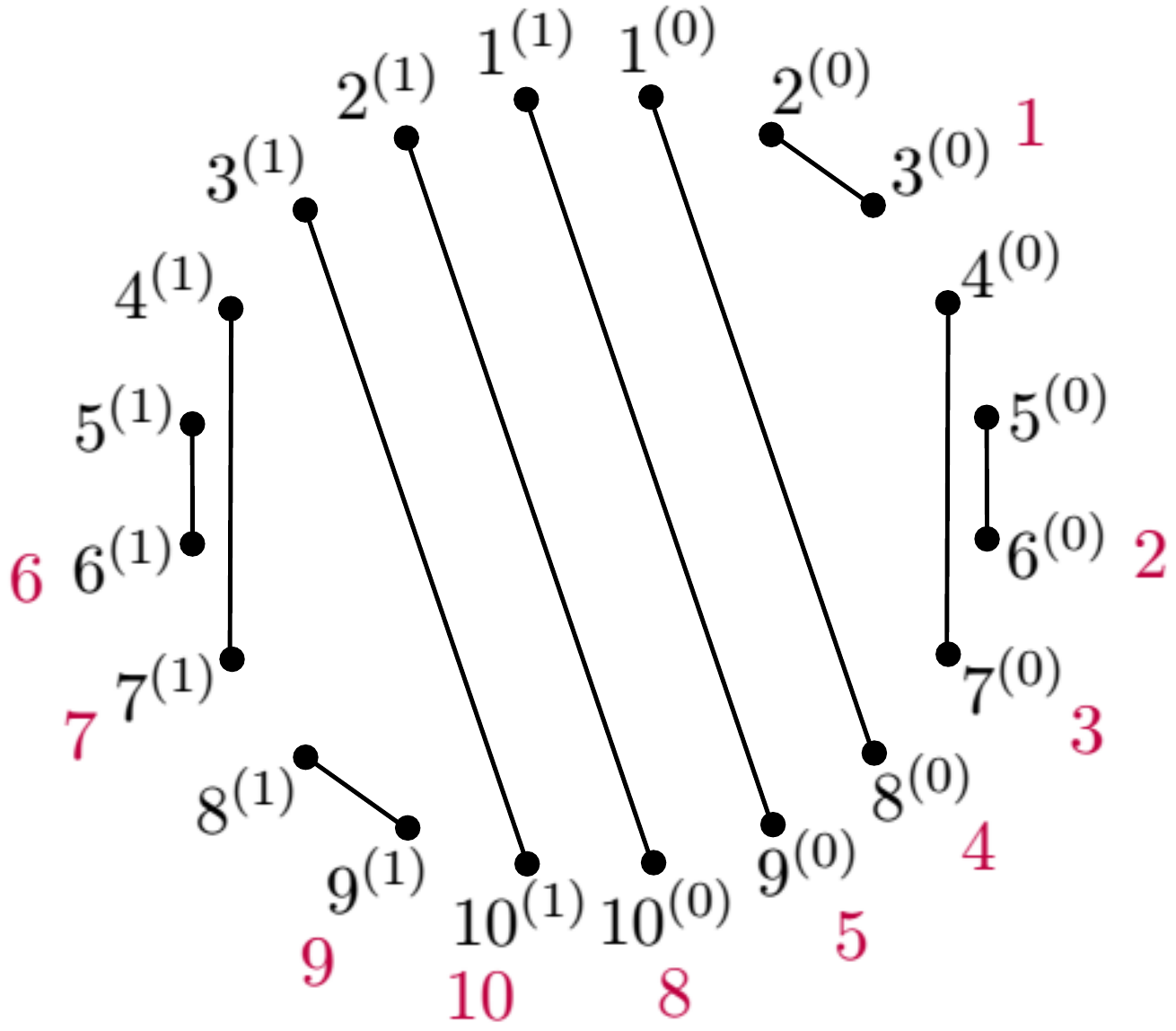}
\end{center}
\caption{An antichain $A\in\mathcal A(\Phi^+(A_9))$ (top) and the corresponding diagram $\psi_{A_9}(A)$ (bottom).}\label{fig:examAfig1}
\end{figure} 

\begin{example}\label{exam:A1}
If $A$ is the antichain $\{[1,3],[2,6],[3,7],[4,8],[5,9],[8,10]\}$ in $\Phi^+(A_9)$ shown on the top of \cref{fig:examAfig1}, then $\psi_{A_9}(A)$ is the diagram shown on the bottom.
The markings (which are not considered to be part of the diagram $\psi_{A_9}(A)$) are represented by purple numbers. For example, the fact that $[1,3]\in A$ tells us that the vertex $3^{(0)}$ should be given the marking ${\color{purple}1}$. Once the markings are placed, the edges are drawn as prescribed above.
\end{example}

The next step is to relabel the vertices in $\psi_{A_{n-1}}(A)$ according to a bipartite Coxeter element $c=c_Lc_R\in W(A_{n-1})$. We choose $L\sqcup R$ to be the unique bipartition of the Dynkin diagram of $\Phi(A_{n-1})$ such that $1\in L$. This means that $c_L=s_1s_3\cdots s_{n-2}$ if $n$ is odd and $c_L=s_1s_3\cdots s_{n-1}$ if $n$ is even. Let $p_1,\ldots,p_n$ be the sequence obtained by listing the even elements of $[n]$ in increasing order and then listing the odd elements of $[n]$ in decreasing order. The cycle decomposition of $c$ (viewed as a permutation in $\mathfrak S_n$) is $(p_1,\ldots,p_n)$. In the diagram $\psi_{A_{n-1}}(A)$, relabel the vertices $1^{(0)},2^{(0)},\ldots,n^{(0)},n^{(1)},\ldots,2^{(1)},1^{(1)}$ as $p_1^{(0)},p_1^{(1)},p_2^{(0)},p_2^{(1)},\ldots,p_n^{(0)},p_n^{(1)}$, respectively. Let us call the relabeled diagram $\varphi_{A_{n-1}}(A)$. 

In \cite{armstrong2013uniform}, it is shown that $\varphi_{A_{n-1}}(A)$ is a noncrossing matching, meaning that each vertex is incident to exactly one edge and that no two edges cross each other. Furthermore, each edge has one endpoint with a $(0)$ superscript and one endpoint with a $(1)$ superscript. This allows us to define a permutation $w\in \mathfrak S_n$ by declaring $w(i)=j$ whenever $i^{(1)}$ is connected to $j^{(0)}$ via an edge in $\varphi_{A_{n-1}}(A)$. Armstrong, Stump, and Thomas proved that this definition yields the bijection $\Theta_{W(A_{n-1})}$ from \cref{thm:ast}. 

\begin{thm}[{\cite[\S3.1]{armstrong2013uniform}}]\label{thm:explicitastA}
Let $\Theta_{W(A_{n-1})}\colon \mathcal A(\Phi^+(A_{n-1}))\to\NC(W(A_{n-1}),c)$ be the bijection from \cref{thm:ast}, where $c=c_Lc_R=(p_1,\ldots,p_n)$ is the bipartite Coxeter element defined above. For every $A\in\mathcal A(\Phi^+(A_{n-1}))$, we have $\Theta_{W(A_{n-1})}(A)(i)=j$ if and only if $i^{(1)}$ is connected to $j^{(0)}$ via an edge in $\varphi_{A_{n-1}}(A)$. 
\end{thm}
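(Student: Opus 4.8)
The plan is to verify that the map $A \mapsto w$ defined via the noncrossing matching $\varphi_{A_{n-1}}(A)$ satisfies the three characterizing properties from \cref{thm:ast}, since by that theorem these properties determine $\Theta_{W(A_{n-1})}$ uniquely. Write $\Psi\colon \mathcal A(\Phi^+(A_{n-1}))\to \mathfrak S_n$ for the candidate map, $\Psi(A)=w$ where $w(i)=j$ iff $i^{(1)}$ is joined to $j^{(0)}$ in $\varphi_{A_{n-1}}(A)$. Before checking the three properties, one first has to establish the claims made in the text just before the theorem statement: that $\varphi_{A_{n-1}}(A)$ is indeed a perfect noncrossing matching in which every edge connects a $(0)$-vertex to a $(1)$-vertex, and moreover that the resulting $w$ lies in $\NC(W(A_{n-1}),c)$, i.e.\ $\ell_T(w)+\ell_T(w^{-1}c)=\ell_T(c)=n-1$. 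The noncrossing-matching claim is the combinatorial heart: one argues by induction on the number of edges added in the greedy construction of $\psi_{A_{n-1}}(A)$, showing that the ``closest unmarked, unused vertex'' rule never creates a crossing, using that the antichain condition on $A=\{[i_\ell,j_\ell]\}$ forces $j_1<\cdots<j_m$ in tandem with $i_1<\cdots<i_m$, so the markings on the $(0)$-arc and on the $(1)$-arc interleave correctly. That $w\in \NC$ then follows because a noncrossing matching on a circle corresponds to a noncrossing set partition (pair the $(0)$/$(1)$ labels), and under the standard dictionary between noncrossing set partitions and $[e,c]$ in absolute order this has the right rank.

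Next I would check the \emph{base case}: when $A=\{\alpha_i\colon i\in L\}$, i.e.\ $A$ consists of the simple roots indexed by $L$, we have $A=\{[i,i+1]\colon i\in L\}$, and I need $\Psi(A)=e$, equivalently $w(i)=i$ for all $i$, equivalently every edge of $\varphi_{A_{n-1}}(A)$ joins $i^{(1)}$ to $i^{(0)}$. This is a direct unwinding of the marking/edge rules together with the relabeling by the bipartite $c$: the markings placed for $[i,i+1]\in A$ sit on vertex $(i+1)^{(0)}$ with label $i$, the remaining labels sit on the $(1)$-side, and after relabeling one checks the greedy matching pairs $i^{(0)}$ with $i^{(1)}$. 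This is routine but must be done carefully with the explicit bipartition ($1\in L$, $c_L=s_1s_3\cdots$).

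The \emph{equivariance} property $\Psi\cdot\row = \Krew\cdot\Psi$ is the main obstacle, and is where the bulk of \cite{armstrong2013uniform}'s Type~A argument lies. I would prove it by giving a direct combinatorial description of the effect of rowmotion on the antichain data $\{(i_\ell,j_\ell)\}$ (rowmotion on $\mathcal A(\Phi^+(A_{n-1}))$ has a well-known description in terms of Dyck paths / lattice paths), translating this into how the marked configuration $\psi_{A_{n-1}}(A)$ changes, and then showing that after relabeling by $c$ the induced change on $w$ is precisely $w\mapsto cw^{-1}$. Concretely: the passage from $\psi_{A_{n-1}}(A)$ to $\psi_{A_{n-1}}(\row(A))$ should correspond to a rotation of the circular diagram by one ``step'' combined with the transpose coming from swapping the roles of the $(0)$- and $(1)$-arcs, and the key point is that the relabeling scheme $1^{(0)},\dots,n^{(0)},n^{(1)},\dots,1^{(1)}\mapsto p_1^{(0)},p_1^{(1)},\dots,p_n^{(0)},p_n^{(1)}$ is exactly engineered so that this geometric rotation becomes Kreweras complementation $w\mapsto cw^{-1}$ in $\NC(W(A_{n-1}),c)$. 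I expect to need a lemma matching "rotate the noncrossing matching by two vertex-positions" with "apply $\Krew$ to the associated noncrossing partition."

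Finally, \emph{parabolic induction}: $\Psi(A)=\prod_{i\in L\setminus\supp(A)}s_i\cdot\Psi_{W_{\supp(A)}}(A)$. Here I would use that $\supp(A)=\{i\colon i \text{ lies in some }[i_\ell,j_\ell-1]\}$, and that the unsupported simple roots correspond to vertices $i^{(1)}$ whose marking forces a ``short'' edge $i^{(1)}\!-\!i^{(0)}$ that, after relabeling, becomes exactly the transposition contribution $s_i$; deleting those edges leaves precisely the diagram $\varphi$ computed inside the parabolic poset $\Phi^+_{\supp(A)}$ relative to the induced bipartite Coxeter element. One checks the set of indices $i$ giving these ``frozen'' short edges is exactly $L\setminus\supp(A)$, and that the remaining edges assemble into $\Psi_{W_{\supp(A)}}(A)$, with the product of the frozen transpositions equal to $c_{L\setminus\supp(A)}=\prod_{i\in L\setminus\supp(A)}s_i$. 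Combining the three verifications with the uniqueness in \cref{thm:ast} gives $\Psi=\Theta_{W(A_{n-1})}$, which is the assertion of the theorem.
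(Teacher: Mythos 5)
This theorem is not proved in the paper at all: it is quoted from Armstrong--Stump--Thomas~\cite{armstrong2013uniform} (their \S3.1), so there is no internal argument to compare against. Measured against the cited source, your strategy is essentially the same one: show that the matching construction defines a bijection $\Psi$ onto $\NC(W(A_{n-1}),c)$ satisfying the base case, the equivariance $\Psi\cdot\row=\Krew\cdot\Psi$, and the parabolic-induction property, and then invoke the uniqueness statement of \cref{thm:ast}. Your subsidiary claims check out (e.g.\ the antichain condition does force $i_1<\cdots<i_m$ and $j_1<\cdots<j_m$ simultaneously, $\supp(A)=\bigcup_\ell[i_\ell,j_\ell-1]$, and the base-case antichain $\{\alpha_i\colon i\in L\}$ does produce only edges $i^{(1)}$--$i^{(0)}$ after relabeling), and the two substantive lemmas you defer --- that $\varphi_{A_{n-1}}(A)$ is a noncrossing perfect matching whose edges join $(0)$- to $(1)$-vertices, and that rowmotion corresponds to rotating the matching, which the relabeling by the bipartite $c$ turns into $w\mapsto cw^{-1}$ --- are exactly the content of the type-A section of~\cite{armstrong2013uniform}, so the plan is sound. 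One calibration to fix when you execute the equivariance step: a single application of $\Krew$ corresponds to rotating the $2n$-vertex matching by \emph{one} vertex position (this is what interchanges the $(0)$- and $(1)$-roles and accounts for the inversion in $cw^{-1}$), whereas rotation by two positions preserves the superscript pattern and realizes $\Krew^2(w)=cwc^{-1}$; your stated expectation (``rotate by two vertex-positions'' $\leftrightarrow$ $\Krew$) has these off by a factor, as you can see by comparing with the use of $\Krew^2\cdot\Flip$ and the reflection line $M$ in \cref{Sec:ASTBijectionA}.
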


\begin{figure}[ht]
\begin{center}
\includegraphics[height=5.5cm]{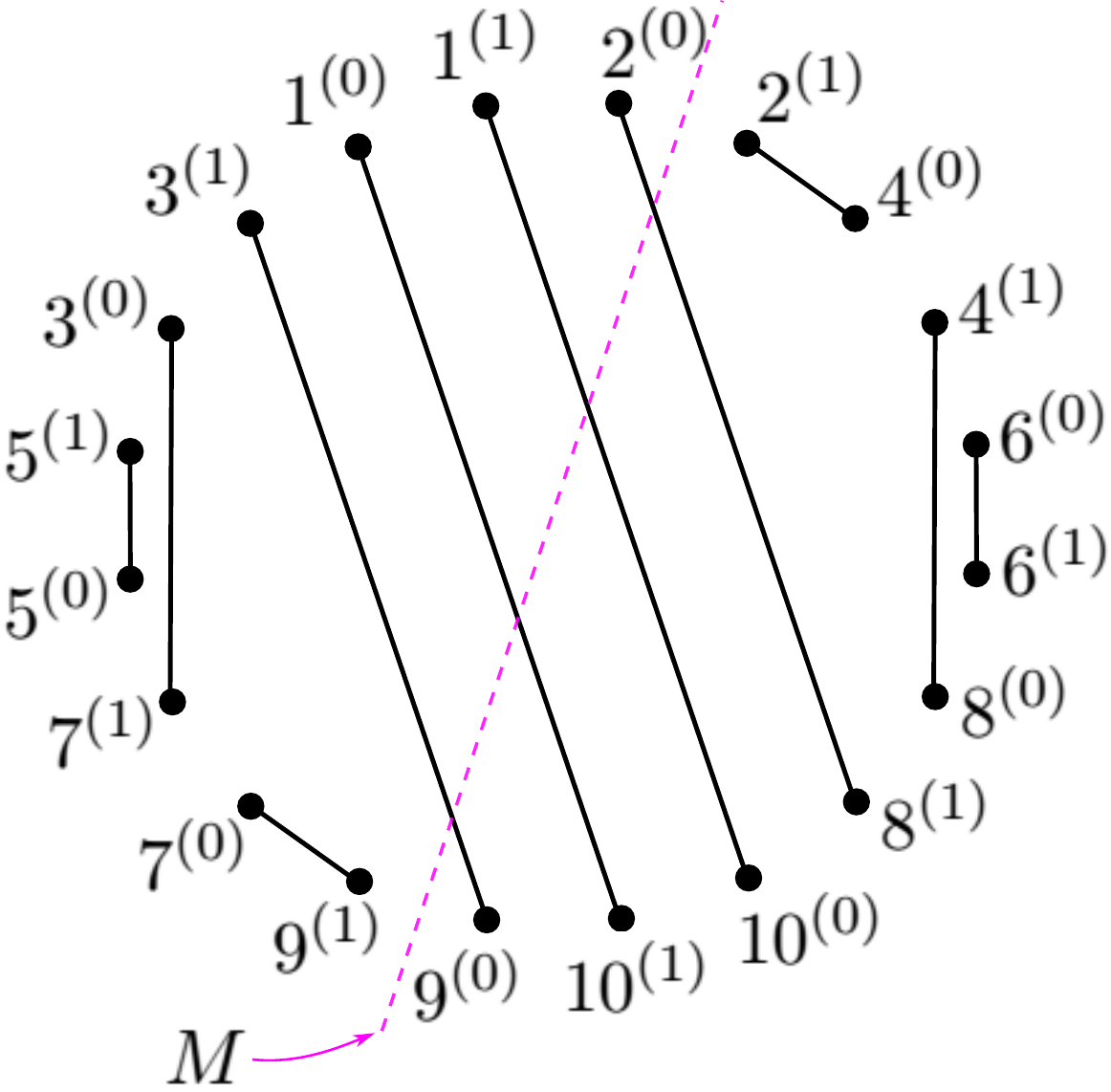}\hspace{1.28cm} \includegraphics[height=5.5cm]{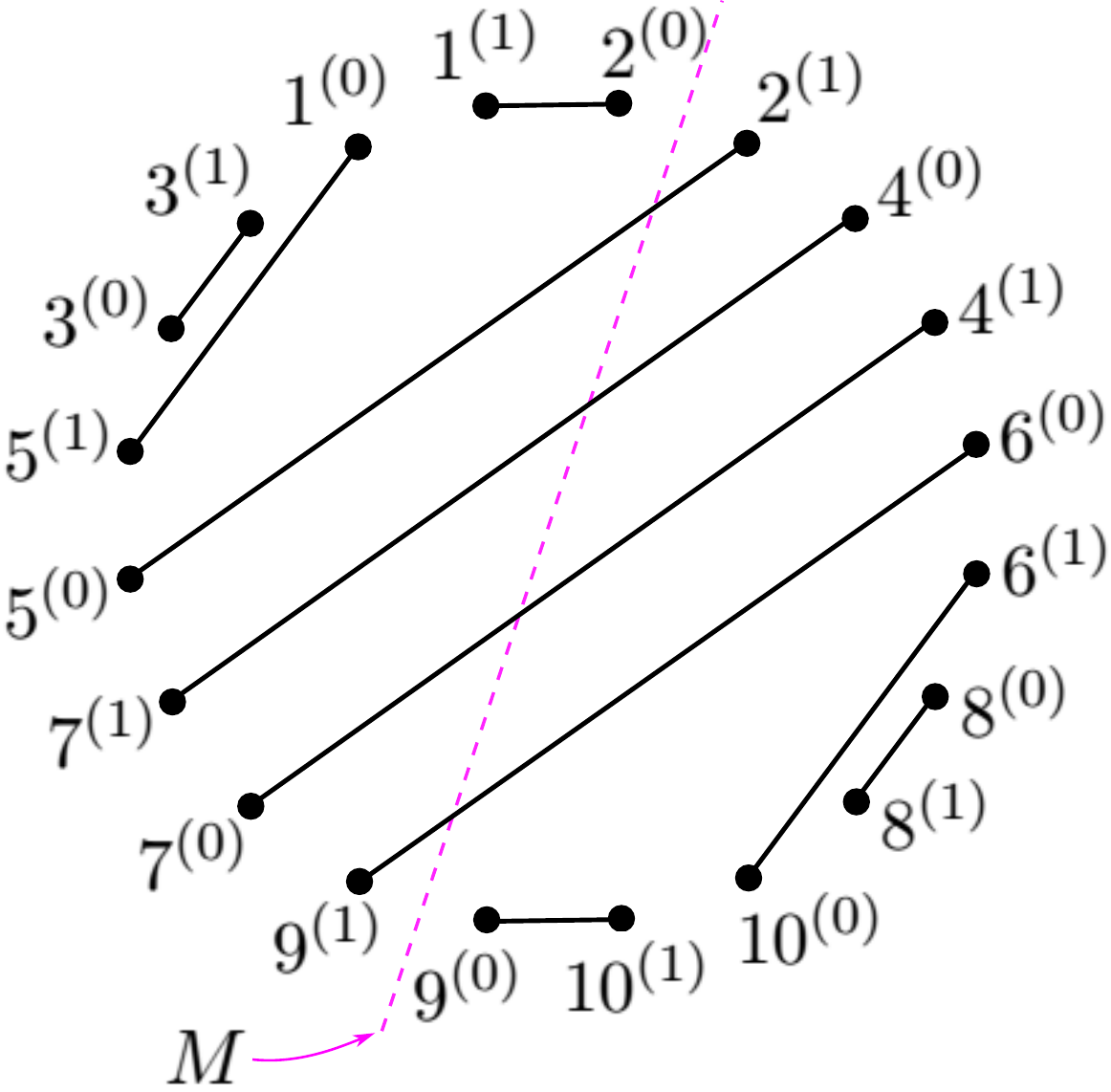}
\end{center}
\caption{The diagrams $\varphi_{A_{9}}(A)$ (left) and $\varphi_{A_9}(\row\cdot\Rvac(A))$ (right), where $A$ is the antichain from \cref{exam:A1}. We have also drawn the line $M$ in each diagram. Note that $\varphi_{A_9}(\row\cdot\Rvac(A))$ is obtained from $\varphi_{A_9}(A)$ by reflecting the edges through $M$.}\label{fig:examfigA1}
\end{figure}

\begin{example}\label{exam:A2}
Let $A$ be as in \cref{exam:A1}. By relabeling the vertices of $\psi_{A_{9}}(A)$ in the manner described above, we obtain the diagram $\varphi_{A_{9}}(A)$ shown on the left in \cref{fig:examfigA1}. \Cref{thm:explicitastA} tells us that $\Theta_{W(A_9)}(A)\in\NC(W(A_9),c)$ is the permutation in $\mathfrak S_{10}$ with cycle decomposition $(1,10)(2,4,8)(3,9,7)(5)(6)$. 
\end{example}

To end this section, we find a simple relationship between the diagrams $\varphi_{A_{n-1}}(A)$ and $\varphi_{A_{n-1}}(\row\cdot \Rvac(A))$ that we will need later. By combining \cref{prop:row_basics} with \cref{thm:ast,thm:ast_intro}, we find that \[\Theta_{W(A_{n-1})}\cdot\row\cdot\Rvac=\Theta_{W(A_{n-1})}\cdot\row^2\cdot\row^{-1}\cdot\Rvac=\Krew^2\cdot\Flip\cdot\Theta_{W(A_{n-1})},\] so 
\begin{align*}
\Theta_{W(A_{n-1})}(\row\cdot \Rvac(A))&=\Krew^2\cdot \Flip(\Theta_{W(A_{n-1})}(A))=c\Flip(\Theta_{W(A_{n-1})}(A))c^{-1}\\
&=c_Lc_Rc_L\Theta_{W(A_{n-1})}(A)^{-1}c_Lc_Rc_L.
\end{align*}
In each of the diagrams $\varphi_{A_{n-1}}(A)$ and $\varphi_{A_{n-1}}(\row \cdot \Rvac(A))$, let $M$ be the line through the center of the circle that is equidistant from $2^{(0)}$ and $2^{(1)}$, as shown in \cref{fig:examfigA1}. Let~$\Omega$ be the reflection of the plane through the line $M$. We think of $\Omega$ as acting on the diagrams $\varphi_{A_{n-1}}(A)$ and $\varphi_{A_{n-1}}(\row\cdot\Rvac(A))$. For each $i\in[n]$, we readily compute that $\Omega(i^{(0)})=(c_Lc_Rc_L(i))^{(1)}$ and $\Omega(i^{(1)})=(c_Lc_Rc_L(i))^{(0)}$. With these observations in hand, the next lemma follows immediately from \cref{thm:explicitastA}.
\cref{fig:examfigA1} illustrates this lemma. 

\begin{lemma}\label{lem:ReflectionA}
Let $A\in\mathcal A(\Phi^+(A))$, and preserve the notation from above. The diagram $\varphi_{A_{n-1}}(\row\cdot \Rvac(A))$ is obtained from $\varphi_{A_{n-1}}(A)$ by reflecting all of the edges through the line $M$ (and leaving the vertices unchanged).
\end{lemma}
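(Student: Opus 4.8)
The plan is to derive Lemma~\ref{lem:ReflectionA} as a direct consequence of \cref{thm:explicitastA} together with the computation, just carried out, of $\Theta_{W(A_{n-1})}(\row\cdot\Rvac(A))$ in terms of $\Theta_{W(A_{n-1})}(A)$. Concretely, set $w\coloneqq\Theta_{W(A_{n-1})}(A)$ and $w'\coloneqq\Theta_{W(A_{n-1})}(\row\cdot\Rvac(A))$, so that $w' = u\,w^{-1}\,u$ where $u\coloneqq c_Lc_Rc_L$ is an involution in $\mathfrak S_n$ (it is an involution because $c_L$ and $c_R$ are, and conjugating an involution by anything gives an involution; alternatively $u$ is itself conjugate to $c_L$). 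The goal is to check that the edge set of $\varphi_{A_{n-1}}(w')$ is exactly the image under $\Omega$ of the edge set of $\varphi_{A_{n-1}}(w)$.

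First I would record the two elementary facts about how $\Omega$ acts on vertex labels: $\Omega(i^{(0)}) = (u(i))^{(1)}$ and $\Omega(i^{(1)}) = (u(i))^{(0)}$ for all $i\in[n]$. These follow from the relabeling recipe — after relabeling, the vertices read $p_1^{(0)},p_1^{(1)},p_2^{(0)},p_2^{(1)},\ldots$ clockwise, so $M$ (the line equidistant from $2^{(0)}$ and $2^{(1)}$, i.e.\ equidistant from $p_1^{(0)}=2^{(0)}$ and $p_1^{(1)}=2^{(1)}$) sends the pair $\{p_k^{(0)},p_k^{(1)}\}$ to the pair $\{p_{2-k \bmod n}^{(0)},\ldots\}$ in the appropriate way; this is exactly conjugation-by-$u$ bookkeeping, and I would either cite the sentence preceding the lemma or spell out the one-line index chase. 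Second, I would unwind \cref{thm:explicitastA}: an edge of $\varphi_{A_{n-1}}(w)$ is precisely a pair $\{i^{(1)}, w(i)^{(0)}\}$ for $i\in[n]$. Applying $\Omega$ to such an edge yields $\{(u(i))^{(0)}, (uw(i))^{(1)}\}$. Writing $j\coloneqq uw(i)$, this is the pair $\{(uwu^{-1}(j))^{(0)}\cdot(\text{wait, recompute})\ldots\}$ — more cleanly, it is the pair $\{j^{(1)}, (u(i))^{(0)}\}$ with $j = uw(i)$, i.e.\ $i = w^{-1}u(j)$ (using $u^{-1}=u$), so $u(i) = uw^{-1}u(j) = w'(j)$. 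Hence $\Omega$ carries the edge $\{i^{(1)}, w(i)^{(0)}\}$ of $\varphi_{A_{n-1}}(w)$ to the edge $\{j^{(1)}, w'(j)^{(0)}\}$, which is exactly an edge of $\varphi_{A_{n-1}}(w')$ by \cref{thm:explicitastA}. Since $j$ ranges over all of $[n]$ as $i$ does, $\Omega$ gives a bijection between the two edge sets, proving the lemma.

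There is essentially no serious obstacle here; the only thing requiring care is getting the direction of the superscripts and the composition $uw^{-1}u$ to match up without an off-by-one or an inversion error, which is the brief index chase sketched above. I would also remark that because $\varphi_{A_{n-1}}(w')$ is known from \cref{thm:explicitastA} to be a noncrossing matching on the same $2n$ relabeled vertices, it is determined by its edge set alone, so equality of edge sets suffices and there is nothing to say about the vertices (which is why the lemma phrases it as "leaving the vertices unchanged"). Finally I would point to \cref{fig:examfigA1} as the illustration, exactly as the excerpt already does.

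\begin{proof}[Proof of \cref{lem:ReflectionA}]
Write $w\coloneqq\Theta_{W(A_{n-1})}(A)$ and $w'\coloneqq\Theta_{W(A_{n-1})}(\row\cdot\Rvac(A))$, and set $u\coloneqq c_Lc_Rc_L\in W(A_{n-1})=\mathfrak S_n$. By the computation preceding the statement, $w' = u\,w^{-1}\,u$, and $u$ is an involution. Moreover, by the description of $\Omega$ recorded above, $\Omega(i^{(0)}) = (u(i))^{(1)}$ and $\Omega(i^{(1)}) = (u(i))^{(0)}$ for all $i\in[n]$.

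By \cref{thm:explicitastA}, the edge set of $\varphi_{A_{n-1}}(w)$ is
\[
E(w) = \bigl\{\, \{\, i^{(1)},\, w(i)^{(0)} \,\} \ :\ i\in[n] \,\bigr\},
\]
and similarly for $E(w')$. Apply $\Omega$ to the edge $\{i^{(1)}, w(i)^{(0)}\}$: it becomes $\{(u(i))^{(0)}, (uw(i))^{(1)}\}$. Put $j\coloneqq uw(i)$; as $i$ runs over $[n]$, so does $j$, and since $u=u^{-1}$ we have $i = w^{-1}u(j)$, hence $u(i) = u\,w^{-1}\,u(j) = w'(j)$. Therefore
\[
\Omega\bigl(\{\, i^{(1)},\, w(i)^{(0)} \,\}\bigr) = \{\, j^{(1)},\, w'(j)^{(0)} \,\} \in E(w').
\]
Thus $\Omega$ maps $E(w)$ bijectively onto $E(w')$. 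Since $\varphi_{A_{n-1}}(w)$ and $\varphi_{A_{n-1}}(w')$ are both noncrossing matchings on the same set of (relabeled) vertices, each is determined by its edge set, so $\varphi_{A_{n-1}}(\row\cdot\Rvac(A))$ is obtained from $\varphi_{A_{n-1}}(A)$ by reflecting all edges through $M$, as claimed.
\end{proof}
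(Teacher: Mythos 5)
Your proof is correct and follows essentially the same route as the paper: the paper derives $\Theta_{W(A_{n-1})}(\row\cdot\Rvac(A))=c_Lc_Rc_L\,\Theta_{W(A_{n-1})}(A)^{-1}c_Lc_Rc_L$, records how $\Omega$ acts on the relabeled vertices, and then declares the lemma ``immediate'' from \cref{thm:explicitastA} -- your write-up simply makes that immediate step explicit via the index chase $\Omega(\{i^{(1)},w(i)^{(0)}\})=\{j^{(1)},w'(j)^{(0)}\}$ with $j=uw(i)$, which checks out. The only cosmetic quibble is the notation $\varphi_{A_{n-1}}(w)$ for what should be $\varphi_{A_{n-1}}(A)$, since $\varphi_{A_{n-1}}$ is defined on antichains rather than on noncrossing partitions.
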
 

\section{The AST Bijection in Type D}\label{Sec:ASTBijectionD}

In this section, we review the pieces of the Armstrong--Stump--Thomas bijection in Type~D that we will need later. We then use those pieces to prove a lemma that will be crucial in the next section. 

Recall from~\cref{subsec:RootPosets} that we have an automorphism $\delta\colon \Phi^+(D_n)\to\Phi^+(D_n)$, a natural quotient map $\gamma\colon \Phi^+(D_n)\to\Phi^+(C_{n-1})$ induced by $\delta$, and a natural inclusion $\iota\colon \mathcal P(\Phi^+(C_{n-1}))\to\mathcal P(\Phi^+(A_{2n-3}))$ obtained by ``unfolding.'' 

The antichains $A$ of $\Phi^+(D_n)$ with $\delta(A)=A$ descend to antichains of $\Phi^+(C_{n-1})$ and so are easily dealt with. Thus, in this section, we fix an $A \in \mathcal{A}(\Phi^+(D_n))$ for which $\delta(A)\neq A$. Then $\iota(\gamma(A))$ is a subset of $\Phi^+(A_{2n-3})$ that is symmetric about the central vertical axis. Note that $\iota(\gamma(A))$ is not necessarily an antichain. As before, we identify the elements of $\Phi^+(A_{2n-3})$ with intervals $[i,j]$ for~$1\leq i<j\leq 2n-2$. Let $\mathcal{Q}$ be the set of elements $[i,j]\in\Phi^+(A_{2n-3})$ such that $i\leq n-1$ and $j\geq n$. Equivalently, $\mathcal Q$ consists of the elements greater than or equal to the minimal element $[n-1,n]$. Let~$[i_1,j_1],\ldots,[i_k,j_k]$ be the elements of~$\iota(\gamma(A))\cap \mathcal{Q}$ written in lexicographic order. (Note that $\delta(A)\neq A$ implies $k\geq 1$.) Let $\widehat A$ be the set of elements of $\Phi^+(A_{2n-3})$ obtained from $\iota(\gamma(A))$ by removing the elements of $\iota(\gamma(A))\cap \mathcal{Q}$ and replacing them with $[i_1,j_2],[i_2,j_3],\ldots,[i_{k-1},j_k]$. Lemma~3.9 in~\cite{armstrong2013uniform} states that $\widehat A$ is an antichain of~$\Phi^+(A_{2n-3})$ that is symmetric about the central vertical axis, i.e., with $\eta(\widehat{A})=\widehat{A}$. 

\begin{example}\label{exam:astconstruction1}
If $A$ is the antichain of $\Phi^+(D_6)$ shown in red in the top left of \cref{fig:examfig1}, then $\iota(\gamma(A))$ is the subset of $\Phi^+(A_9)$ shown in red in the top right. The set~$\mathcal{Q}$ consists of the elements of $\Phi^+(A_9)$ lying inside the green square in the figure. The elements of $\iota(\gamma(A))\cap \mathcal{Q}$ are $[2,6]$, $[3,6]$, $[4,7]$, $[5,8]$, and $[5,9]$; $\widehat A\in\mathcal A(\Phi^+(A_9))$ is obtained from $\iota(\gamma(A))$ by replacing these $5$ elements with $[2,6]$, $[3,7]$, $[4,8]$, and $[5,9]$. Notice that $\widehat A$, which is depicted in red in the bottom of \cref{fig:examfig1}, is indeed an antichain of $\Phi^+(A_9)$. In fact, $\widehat A$ is the same as the antichain from \cref{fig:examAfig1}.
\end{example}

\begin{figure}[ht]
\begin{center}
\includegraphics[height=3.61cm]{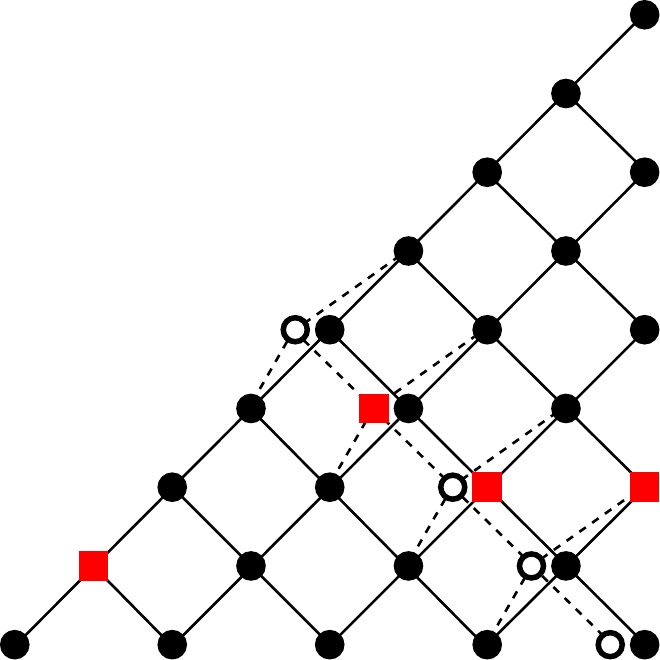}\hspace{1.7cm} \includegraphics[height=3.791cm]{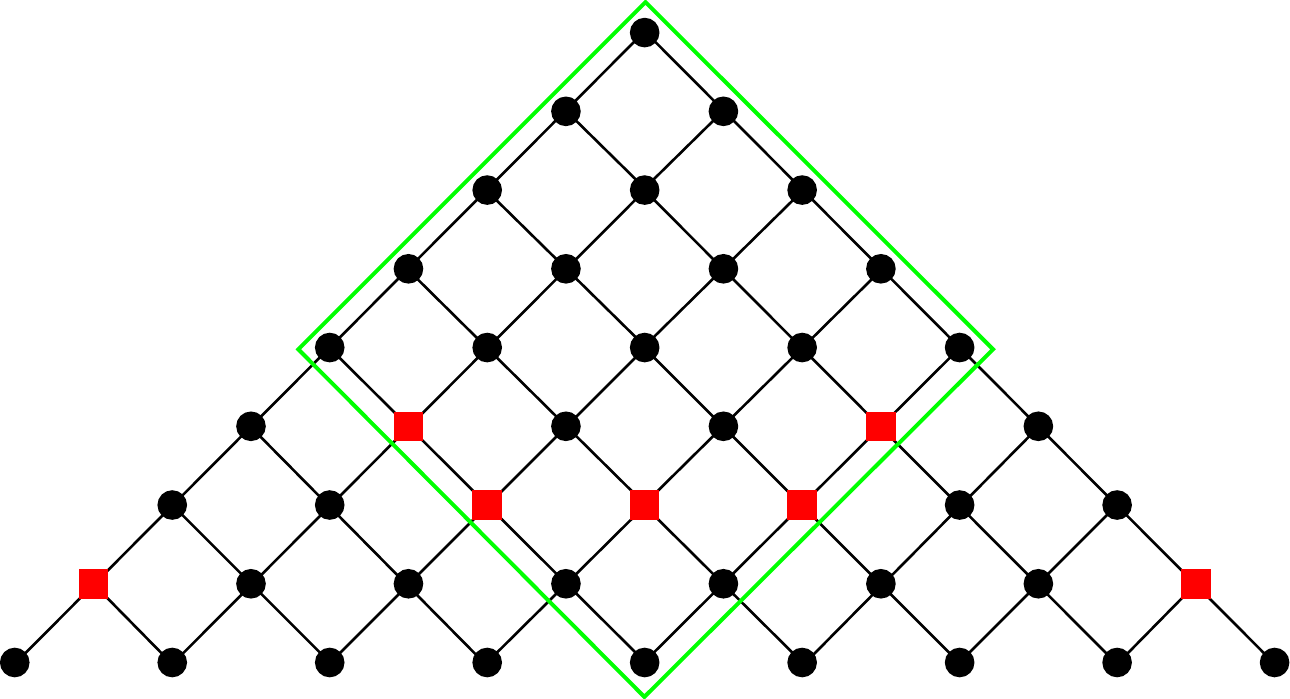}\\ \vspace{.3cm} \includegraphics[height=3.791cm]{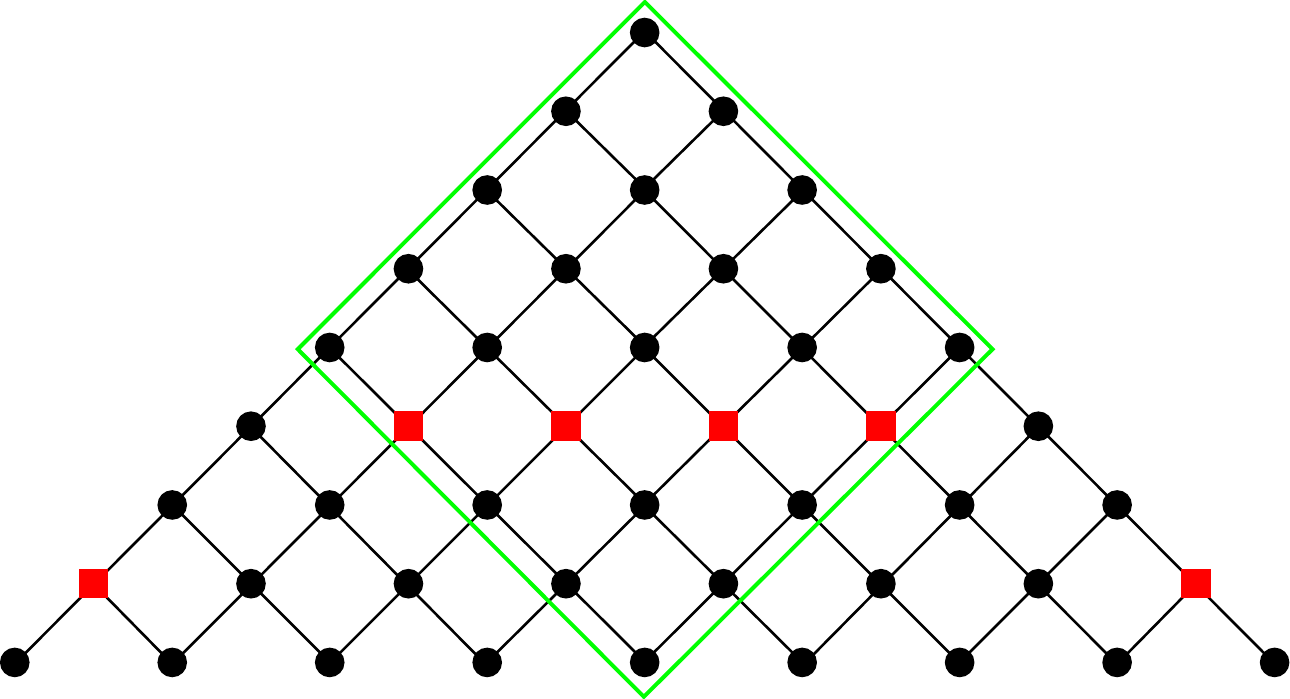} 
\end{center}
\caption{From the antichain $A\in\mathcal A(\Phi^+(D_6))$ (top left), we obtain the set $\iota(\gamma(A))\subseteq\Phi^+(A_9)$ (top right) and the antichain $\widehat A\in\mathcal A(\Phi^+(A_9))$ (bottom). Note that $\iota(\gamma(A))$ is not an antichain.}\label{fig:examfig1}
\end{figure}

Since $\widehat A$ is an antichain of $\Phi^+(A_{2n-3})$, we can consider the diagram $\varphi_{A_{2n-3}}(\widehat A)$ defined in \cref{Sec:ASTBijectionA}. Recall that $M$ is the line through the center of the circle that is equidistant from $2^{(0)}$ and $2^{(1)}$. Let $M^\perp$ denote the line through the center of the circle that is perpendicular to $M$. Let $H$ be the set of vertices in the diagram that are on the same side of $M^\perp$ as $2^{(0)}$, and let $\overline H$ be the set of vertices in the diagram that are on the opposite side of $M^\perp$. We say an edge in the diagram is \dfn{transverse} if it has one endpoint in $H$ and one endpoint in $\overline H$. Lemma~3.11 in \cite{armstrong2013uniform} implies that there are at least two transverse edges in $\varphi_{A_{2n-3}}(\widehat A)$; by removing the two transverse edges that are closest to the center of the circle, we obtain a new diagram that we denote by $\xi_{A_{2n-3}}(\widehat A)$. 

We now relabel the vertices in $\xi_{A_{2n-3}}(\widehat A)$ according to a bipartite Coxeter element $c=c_Lc_R\in W(D_n)$. We choose $L\sqcup R$ to be the unique bipartition of the Dynkin diagram of $\Phi(D_n)$ such that $1\in L$. This means that $c_L=s_1s_3\cdots s_{n-4}s_{n-2}$ if $n$ is odd and $c_L=s_1s_3\cdots s_{n-3}s_{n-1}s_n$ if $n$ is even. Let $q_1,\ldots,q_{2n-2}$ be the sequence of numbers obtained by listing the even elements of $[n-1]$ in increasing order, then listing the odd elements of $-[n-1]$ in increasing order, then listing the even elements of $-[n-1]$ in decreasing order, and finally listing the odd elements of $[n-1]$ in decreasing order. The cycle decomposition of $c$ is $(q_1,\ldots,q_{2n-2})(n,-n)$. Relabel the vertices in the diagram $\xi_{A_{2n-3}}(\widehat A)$, starting at $2^{(0)}$ and moving clockwise, as \[q_1^{(0)},q_1^{(1)},q_2^{(0)},q_2^{(1)},\ldots,q_{2n-2}^{(0)},q_{2n-2}^{(1)}.\] Let us call the relabeled diagram $\varphi_{D_n}(A)$. 

\begin{example}\label{exam:astconstruction2}
Let $A$ and $\widehat A$ be as in \cref{exam:astconstruction1}. The diagram $\varphi_{A_9}(\widehat A)$, which we computed in \cref{exam:A2}, is drawn again on the left in \cref{fig:examfig3}. We have now included the line $M^\perp$ in the figure. Furthermore, we have colored the vertices in $H$ red and colored the vertices in $\overline H$ blue. The two transverse edges that are closest to the center of the circle are the one connecting $1^{(1)}$ to $10^{(0)}$ and the one connecting $10^{(1)}$ to $1^{(0)}$; these two edges are removed to produce $\xi_{A_9}(\widehat A)$. After relabeling the vertices of $\xi_{A_9}(\widehat A)$ in the manner described above, we obtain the diagram $\varphi_{D_6}(A)$ shown on the right in \cref{fig:examfig3}.
\end{example}

\begin{figure}[ht]
\begin{center}
\includegraphics[height=5.5cm]{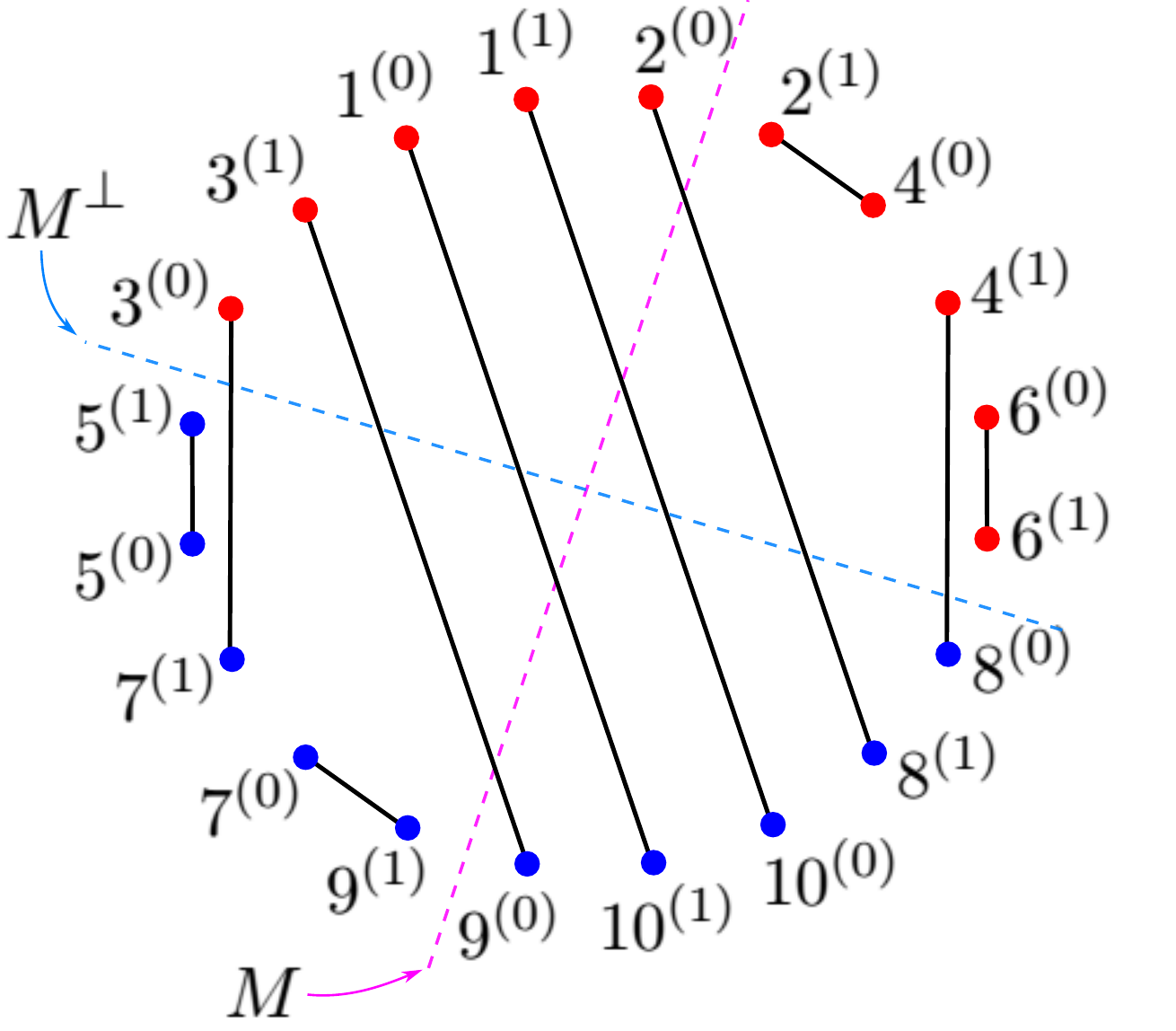}\hspace{1.28cm} \includegraphics[height=5.5cm]{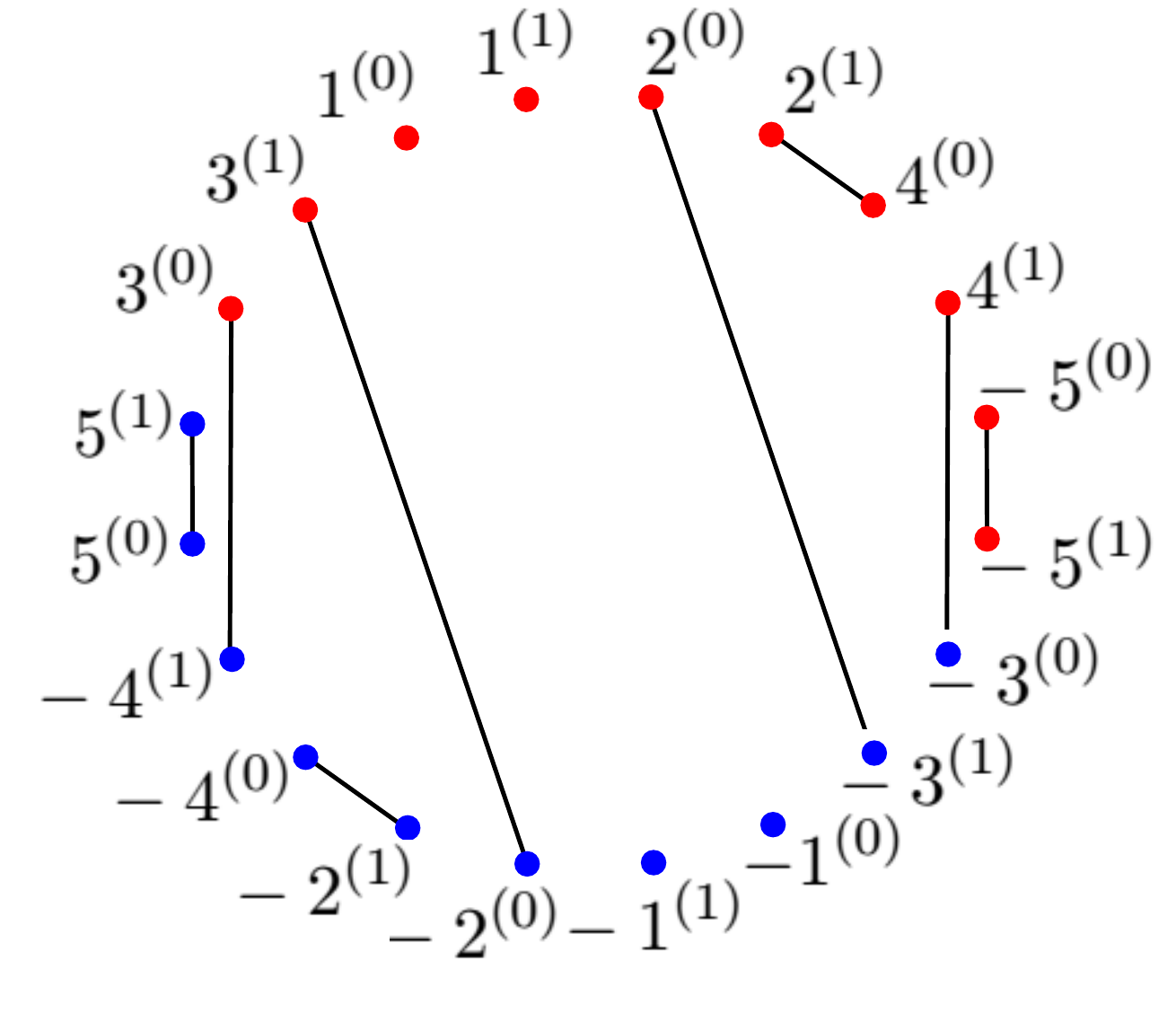}
\end{center}
\caption{The diagrams $\varphi_{A_9}(\widehat A)$ (left) and $\varphi_{D_6}(A)$ (right) from \cref{exam:astconstruction2}.}\label{fig:examfig3}
\end{figure}

Armstrong, Stump, and Thomas used the diagrams we have constructed in this section to give an explicit description of the map $\Theta_{W(D_n)}$. We will only need part of their description, which we state in the following theorem. Recall that we view~$W(D_n)$ as the group of permutations $w$ of $(-[n])\cup[n]$ such that $w(-i)=-w(i)$ for all $i\in[n]$ and such that $\#\{i\in[n]:w(i)<0\}$ is even. 

\begin{thm}[{\cite[\S3.3]{armstrong2013uniform}}]\label{thm:astexplicitD}
Let $\Theta_{W(D_n)}\colon \mathcal A(\Phi^+(D_n))\to\NC(W(D_n),c)$ be the bijection from \cref{thm:ast}, where $c=c_Lc_R=(q_1,\ldots,q_{2n-2})(n,-n)$ is the bipartite Coxeter element defined above.  Let $A\in\mathcal A(\Phi^+(D_n))$ be such that $\delta(A)\neq A$. The diagram $\varphi_{D_n}(A)$ defined above is left unchanged if we rotate each of the edges by~$180^\circ$ about the center of the circle. If there is an edge in $\varphi_{D_n}(A)$ with endpoints $i^{(1)}$ and~$j^{(0)}$, then $\Theta_{W(D_n)}(A)(i)=j$. There exist $x_A,y_A\in[n-1]$ such that the four vertices of $\varphi_{D_n}(A)$ not incident to any edges are $x_A^{(0)},-x_A^{(0)},y_A^{(1)},-y_A^{(1)}$ and such that $\Theta_{W(D_n)}(A)(\{n,-n\})=\{x_A,-x_A\}$ and $\Theta_{W(D_n)}(A)(\{y_A,-y_A\})=\{n,-n\}$. 
\end{thm}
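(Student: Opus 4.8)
The three assertions together are, in essence, a restatement of the explicit Type~D description of $\Theta_{W(D_n)}$ due to Armstrong, Stump, and Thomas, so the plan is to extract them from~\cite[\S3.3]{armstrong2013uniform}, spelling out the points we will actually use. First I would establish the $180^\circ$ rotational symmetry of $\varphi_{D_n}(A)$. The starting point is that $\eta(\widehat A)=\widehat A$ (Lemma~3.9 in~\cite{armstrong2013uniform}). One then traces this symmetry through the Type~A construction of~\cref{Sec:ASTBijectionA}: the $\eta$-symmetry of $\widehat A$ forces a reflective symmetry of the noncrossing matching $\varphi_{A_{2n-3}}(\widehat A)$, and the interleaving relabeling underlying the passage to $\varphi_{D_n}(A)$ (in which the vertices are re-indexed in the pattern $q_1^{(0)},q_1^{(1)},q_2^{(0)},q_2^{(1)},\dots$) converts this reflection into a $180^\circ$ rotation. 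A direct computation with the sequence $q_1,\dots,q_{2n-2}$ shows that the $180^\circ$ rotation sends each vertex $i^{(s)}$ to $(-i)^{(s)}$; since the two innermost transverse edges of $\varphi_{A_{2n-3}}(\widehat A)$ are interchanged by this rotation, deleting them to form $\xi_{A_{2n-3}}(\widehat A)$ preserves the symmetry, and hence so does $\varphi_{D_n}(A)$.

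Because the rotation acts as $i^{(s)}\mapsto(-i)^{(s)}$, the symmetry is exactly the assertion that the signed permutation read off the edges of $\varphi_{D_n}(A)$ via ``$\{i^{(1)},j^{(0)}\}$ an edge $\Rightarrow i\mapsto j$'' commutes with $i\mapsto-i$, i.e.\ lies in $W(D_n)$; this handles the edge-reading assertion once we know the edges genuinely compute $\Theta_{W(D_n)}(A)$ on $[n-1]\cup(-[n-1])$. For the latter, and for the description of the four unmatched vertices, I would follow the AST strategy of checking that the explicitly-defined map satisfies the three characterizing properties of~\cref{thm:ast} (base case, the equivariance $\Theta_W\cdot\row=\Krew\cdot\Theta_W$, and parabolic induction), leaning on the folding relationship among $\Phi^+(D_n)$, $\Phi^+(C_{n-1})$, $\Phi^+(A_{2n-3})$ and on the already-recorded Type~A formula of~\cref{thm:explicitastA}. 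The cycle $(n,-n)$ of the bipartite Coxeter element is ``missing'' from the edges; deleting the two transverse edges of $\varphi_{A_{2n-3}}(\widehat A)$ leaves four unmatched vertices, which the symmetry forces into two antipodal pairs $\{x_A^{(0)},-x_A^{(0)}\}$ and $\{y_A^{(1)},-y_A^{(1)}\}$, and following the deletion through the relabeling identifies these as recording $n\mapsto x_A$ and $y_A\mapsto n$ (equivalently $-n\mapsto-x_A$ and $-y_A\mapsto-n$), which is the final assertion.

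The genuinely hard step is the verification of the equivariance and the parabolic induction in Type~D. This is where the well-known delicacy of Type~D noncrossing partitions enters: one must work in the centrally-symmetric (annular) model of $\NC(W(D_n),c)$ in which $\Krew$ is a rotation, and show that the diagrammatic operations---in particular the passage between $\Phi^+(D_n)$, its $C_{n-1}$-folding, and the symmetric Type~A antichain $\widehat A$---match Kreweras complementation and parabolic restriction exactly. Since this is carried out in full in~\cite[\S3.3]{armstrong2013uniform}, the role of the proof here is really to cite that verification and to record the concrete dictionary in the statement; the traces of the $\eta$-symmetry through the relabeling sketched above are the only additional bookkeeping we need downstream.
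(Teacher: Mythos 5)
Your overall plan---state the three assertions and lean on~\cite[\S3.3]{armstrong2013uniform} for the substantive verification (equivariance and parabolic induction in Type~D)---is exactly what the paper does: this theorem is presented there as a quoted result of Armstrong--Stump--Thomas with no independent proof, so in that respect your proposal matches the paper's treatment.

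However, the one piece of ``bookkeeping'' you do supply contains a concrete error. You claim that $\eta(\widehat A)=\widehat A$ forces a \emph{reflective} symmetry of $\varphi_{A_{2n-3}}(\widehat A)$ which the relabeling $q_1^{(0)},q_1^{(1)},q_2^{(0)},q_2^{(1)},\ldots$ then ``converts'' into a $180^\circ$ rotation. A relabeling cannot do this: $\varphi_{A_{2n-3}}(\widehat A)$, $\xi_{A_{2n-3}}(\widehat A)$, and $\varphi_{D_n}(A)$ are the same geometric picture with different names on the vertices, so whatever rigid symmetry the edge set has is the same before and after relabeling. What is actually true is that $\eta$-invariance of $\widehat A$ already corresponds to invariance of the matching $\varphi_{A_{2n-3}}(\widehat A)$ under the $180^\circ$ rotation itself: the antipodal map sends $p_k^{(s)}$ to $p_{k+n-1}^{(s)}$, i.e.\ $i^{(s)}\mapsto(2n-1-i)^{(s)}$ in the Type~A labels (conjugation by $-w_0$, which fixes $\Theta_{W(A_{2n-3})}(\widehat A)$ precisely when $\eta(\widehat A)=\widehat A$), and in the Type~D labels this same rotation reads $i^{(s)}\mapsto(-i)^{(s)}$, as you correctly compute. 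By contrast, the reflection through the line $M$ is a genuinely different symmetry: by \cref{lem:ReflectionA} it carries $\varphi_{A_{2n-3}}(\widehat A)$ to $\varphi_{A_{2n-3}}(\row\cdot\Rvac(\widehat A))$, and the paper's running example (\cref{fig:examfigA1}) shows a centrally symmetric diagram that is \emph{not} $M$-symmetric. This matters for your own argument, since your next step pairs up the two deleted transverse edges using ``this rotation'': you need the rotation to be a symmetry of $\varphi_{A_{2n-3}}(\widehat A)$ in the first place, which your reflection claim does not give. With that step corrected (or simply absorbed into the AST citation along with everything else), the rest of your outline is consistent with the cited source.
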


\begin{example}\label{exam:astconstruction3}
Let $A$ be as in \cref{exam:astconstruction1} and \cref{exam:astconstruction2}. The diagram $\varphi_{D_6}(A)$ (shown on the right in \cref{fig:examfig3}) is clearly left unchanged when its edges are rotated by $180^\circ$ about the center of the circle. \cref{thm:astexplicitD} tells us that most of the values of $\Theta_{W(D_6)}(A)$ are determined by $\varphi_{D_6}(A)$. For example, the edge between $-3^{(1)}$ and $2^{(0)}$ tells us that $\Theta_{W(D_6)}(A)(-3)=2$. In this example, $x_A=y_A=1$, so \cref{thm:astexplicitD} tells us that $\Theta_{W(D_6)}(A)(\{1,-1\})=\{6,-6\}$ and $\Theta_{W(D_6)}(A)(\{6,-6\})=\{1,-1\}$.
\end{example}

We are now in a position to prove the main lemma of this section, which states that $\row \cdot \Rvac(\widehat A)=\widehat{\row\cdot \Rvac(A)}$. When we compute $\row\cdot \Rvac(\widehat A)$, we are viewing $\row$ and $\Rvac$ as operators on $\mathcal A(\Phi^+(A_{2n-3}))$; when we compute $\widehat{\row\cdot \Rvac(A)}$, we are viewing $\row$ and $\Rvac$ as operators on $\mathcal A(\Phi^+(D_n))$. Note that for any antichain $B \in \Phi^+(D_n)$ obtained from $A$ via a series of rowmotions and rowvacuations, we still have $\delta(B)\neq B$ because rowvacuation and rowmotion commute with poset automorphisms like~$\delta$. Hence, it makes sense to speak of $\widehat{B}$.

\begin{lemma}\label{lem:HatCommutes}
If $A\in\mathcal A(\Phi^+(D_n))$ is such that $\delta(A)\neq A$, then 
\[\row\cdot \Rvac(\widehat A)=\widehat{\row\cdot \Rvac(A)}.\]
\end{lemma}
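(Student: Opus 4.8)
The plan is to establish the identity
\[
\row_{\Phi^+(A_{2n-3})}\cdot\Rvac_{\Phi^+(A_{2n-3})}(\widehat A)=\widehat{\row_{\Phi^+(D_n)}\cdot\Rvac_{\Phi^+(D_n)}(A)}
\]
by translating both sides into the language of the noncrossing diagrams built in \cref{Sec:ASTBijectionA,Sec:ASTBijectionD}, and then observing that the "hat" construction, the operator $\row\cdot\Rvac$, and the passage from $\varphi_{A_{2n-3}}$ to $\varphi_{D_n}$ all interact with a single geometric symmetry of the $2(2n-2)$-vertex circle. Concretely, recall from \cref{lem:ReflectionA} that $\varphi_{A_{2n-3}}(\row\cdot\Rvac(B))$ is obtained from $\varphi_{A_{2n-3}}(B)$ simply by reflecting all edges through the line $M$. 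So the left-hand side is controlled by: reflect $\varphi_{A_{2n-3}}(\widehat A)$ through $M$. For the right-hand side, I would first understand $\varphi_{D_n}(\row\cdot\Rvac(A))$. Using \cref{prop:row_basics}, \cref{thm:ast}, and \cref{thm:ast_intro} in Type~D exactly as was done for Type~A just before \cref{lem:ReflectionA}, one gets
\[
\Theta_{W(D_n)}(\row\cdot\Rvac(A))=\Krew^2\cdot\Flip\,(\Theta_{W(D_n)}(A)) = c_Lc_Rc_L\,\Theta_{W(D_n)}(A)^{-1}\,c_Lc_Rc_L,
\]
which, via \cref{thm:astexplicitD}, says that $\varphi_{D_n}(\row\cdot\Rvac(A))$ is obtained from $\varphi_{D_n}(A)$ by reflecting the edges through the analogous line $M_D$ (the one equidistant from $q_1^{(0)}=2^{(0)}$ and $q_1^{(1)}=2^{(1)}$). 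In other words, \emph{both} $\row\cdot\Rvac$ operators are "reflect the edges through $M$" in the respective diagrams, and these two reflection lines are literally the same line in the plane once the circles are identified.

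The key structural step is then to show that the whole Type~D construction $A\mapsto\varphi_{D_n}(A)$ factors through $\widehat A\mapsto\varphi_{A_{2n-3}}(\widehat A)$ in a way compatible with reflection through $M$. Recall the chain: $\widehat A\in\mathcal A(\Phi^+(A_{2n-3}))$ is built from $\iota(\gamma(A))$ by the interval-unsplicing rule; then $\varphi_{A_{2n-3}}(\widehat A)$ is formed; then one deletes the two transverse edges nearest the center to get $\xi_{A_{2n-3}}(\widehat A)$; then one relabels vertices (clockwise from $2^{(0)}$) by the $q_i$-sequence to get $\varphi_{D_n}(A)$. The relabeling is a fixed permutation of vertex labels that does not touch the edge set, so reflection through $M$ commutes with it. Deletion of "the two transverse edges nearest the center" is also manifestly reflection-symmetric: $M$ passes through the center, so reflection through $M$ sends transverse edges to transverse edges and preserves distance-to-center, hence sends $\xi$ of a diagram to $\xi$ of its reflection, \emph{provided} the reflected diagram of $\varphi_{A_{2n-3}}(\widehat A)$ is again of the form $\varphi_{A_{2n-3}}(\widehat{B})$ for the relevant $B$. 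That provision is exactly \cref{lem:ReflectionA} applied to $\widehat A$: the reflection of $\varphi_{A_{2n-3}}(\widehat A)$ through $M$ equals $\varphi_{A_{2n-3}}(\row\cdot\Rvac(\widehat A))$. Assembling these, $\varphi_{D_n}(\row\cdot\Rvac(A))$ — the reflection of $\varphi_{D_n}(A)$ — equals the result of running the Type~D post-processing (delete two central transverse edges, relabel) on the reflection of $\varphi_{A_{2n-3}}(\widehat A)$, i.e.\ on $\varphi_{A_{2n-3}}(\row\cdot\Rvac(\widehat A))$. On the other hand $\varphi_{D_n}(\widehat{\row\cdot\Rvac(A)})$ is \emph{by definition} the result of running that same Type~D post-processing on $\varphi_{A_{2n-3}}(\widehat{\,\row\cdot\Rvac(A)\,})$. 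So it suffices to compare $\varphi_{A_{2n-3}}(\row\cdot\Rvac(\widehat A))$ with $\varphi_{A_{2n-3}}(\widehat{\,\row\cdot\Rvac(A)\,})$, and since $\varphi_{A_{2n-3}}$ is injective on antichains (it encodes a noncrossing matching from which the antichain is recoverable), the lemma reduces to the claim that these two diagrams agree.

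To finish I would argue that the two diagrams agree by a direct but careful combinatorial check at the level of the interval data. On one side, $\row\cdot\Rvac(\widehat A)$ is computed by the explicit Lalanne--Kreweras-type formula of \cref{thm:lk} (composed with one rowmotion), applied to the antichain $\widehat A=\{[i_1,j_2],[i_2,j_3],\dots\}\cup(\text{rest of }\iota(\gamma(A)))$. On the other side, $\widehat{\,\row\cdot\Rvac(A)\,}$ requires: apply $\row\cdot\Rvac$ in $\Phi^+(D_n)$, then $\gamma$, then $\iota$, then the unsplicing rule. The heart of the matter is that $\gamma$ and $\iota$ intertwine rowvacuation in Type~D with rowvacuation in Type~$A_{2n-3}$ on $\eta$-symmetric configurations (rowvacuation commutes with poset automorphisms and behaves well under the folding quotient $\gamma$ — though note $\gamma$ is a quotient of posets, not an embedding, so this needs the genuine argument, plausibly via the $\iota$-unfolding together with the fact, used implicitly in \cite{armstrong2013uniform}, that $\widehat{(\cdot)}$ converts the Type-D combinatorics into $\eta$-symmetric Type-A combinatorics), and that the unsplicing rule commutes with reflection through $M$ because $M$ is precisely the axis that reflection-swaps the roles of the "left endpoints $i_\ell$" and "right endpoints $j_\ell$" — which is the same swap that the Lalanne--Kreweras formula of \cref{thm:lk} performs. \textbf{The main obstacle} I anticipate is precisely this last bookkeeping: verifying that the set $\mathcal Q$ (elements above $[n-1,n]$) and the unsplicing/re-splicing operation $[i_1,j_1],\dots,[i_k,j_k]\mapsto[i_1,j_2],\dots,[i_{k-1},j_k]$ are carried to themselves under reflection through $M$, and that this reflection-equivariance survives the composition with $\row\cdot\Rvac$; the transverse-edge deletion in the definition of $\xi$ also has to be shown to remove the "same two" edges on both sides, which amounts to checking that reflection through $M$ and rotation by $180^\circ$ (the symmetry identified in \cref{thm:astexplicitD}) generate a group action under which the central transverse edges form a single orbit. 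None of these steps is conceptually deep, but each requires tracking the interval endpoints through several of the maps defined in \cref{Sec:ASTBijectionA,Sec:ASTBijectionD}, so I would organize the proof around the single invariant "reflection of the diagram through $M$" and verify compatibility with each map in turn.
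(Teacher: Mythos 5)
Your first two steps follow the paper's argument: the computation $\Theta_{W(D_n)}(\row\cdot\Rvac(A))=c_Lc_Rc_L\,\Theta_{W(D_n)}(A)^{-1}c_Lc_Rc_L$ together with \cref{thm:astexplicitD} shows that $\varphi_{D_n}(\row\cdot\Rvac(A))$ is the reflection of $\varphi_{D_n}(A)$ through $M$, and \cref{lem:ReflectionA} identifies the reflection of $\varphi_{A_{2n-3}}(\widehat A)$ with $\varphi_{A_{2n-3}}(\row\cdot\Rvac(\widehat A))$; the observation that $\Omega$ preserves $H$, $\overline H$ and distance to the center, so that edge-deletion and relabeling commute with the reflection, is also exactly the point the paper makes. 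But you do not close the argument. What you have actually derived is that the \emph{post-processed} diagrams agree: deleting the two innermost transverse edges and relabeling, applied to $\varphi_{A_{2n-3}}(\row\cdot\Rvac(\widehat A))$ and to $\varphi_{A_{2n-3}}(\widehat{\row\cdot\Rvac(A)})$, yields the same diagram $\varphi_{D_n}(\row\cdot\Rvac(A))$. To conclude that the two Type~A diagrams themselves agree you must pass back through the edge-deletion, i.e.\ you need that step to be injective. This is precisely the paper's remaining move: since both diagrams are noncrossing perfect matchings, each is uniquely reconstructible from its $\xi$-diagram (the two deleted transverse edges are forced by noncrossingness), so equality after deletion forces equality before deletion, and then injectivity of $\Theta_{W(A_{2n-3})}$ (\cref{thm:explicitastA}) gives $\row\cdot\Rvac(\widehat A)=\widehat{\row\cdot\Rvac(A)}$. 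Instead of making this one-line observation, you declare that the two diagrams must be compared ``by a direct but careful combinatorial check at the level of the interval data,'' via \cref{thm:lk}, $\gamma$, $\iota$, $\mathcal Q$ and the unsplicing rule, and you explicitly leave that check undone and flag it as the main obstacle. That check amounts to re-proving the lemma from scratch by a much more laborious route, so as written the proposal has a genuine gap at its final and essential step.

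A secondary inaccuracy: in the ``provided\dots'' sentence you claim that \cref{lem:ReflectionA} guarantees the reflection of $\varphi_{A_{2n-3}}(\widehat A)$ is of the form $\varphi_{A_{2n-3}}(\widehat B)$ for $B=\row\cdot\Rvac(A)$. It does not: it only says the reflection equals $\varphi_{A_{2n-3}}(\row\cdot\Rvac(\widehat A))$, and identifying $\row\cdot\Rvac(\widehat A)$ with $\widehat B$ is the statement of the lemma itself, so invoking it there is circular. Likewise, your proposed fix for the transverse-edge issue (showing the central transverse edges form one orbit under the group generated by $\Omega$ and the $180^\circ$ rotation) is not the relevant statement; what is needed, and what suffices, is the unique-reconstruction property above.
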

\begin{proof}
To ease notation, let $B=\row\cdot \Rvac(A)$. By combining \cref{prop:row_basics} with \cref{thm:ast,thm:ast_intro}, we find that 
\[\Theta_{W(D_n)}\cdot\row\cdot\Rvac=\Theta_{W(D_n)}\cdot\row^2\cdot\row^{-1}\cdot\Rvac=\Krew^2\cdot\Flip\cdot\Theta_{W(D_n)},\] 
so  
\begin{align*} 
\Theta_{W(D_n)}(B)&=\Krew^2\cdot \Flip(\Theta_{W(D_n)}(A))=c\Flip(\Theta_{W(D_n)}(A))c^{-1}\\
&=c_Lc_Rc_L\Theta_{W(D_n)}(A)^{-1}c_Lc_Rc_L,
\end{align*}
where $c=c_Lc_R\in W(D_n)$ is the bipartite Coxeter element defined above. In each of the diagrams $\varphi_{D_n}(A)$ and $\varphi_{D_n}(B)$, let $M$ be the line through the center of the circle that is equidistant from $2^{(0)}$ and $2^{(1)}$ (this is the same as the line $M$ shown on the left in \cref{fig:examfig3}). Let $\Omega$ be the reflection of the plane through the line $M$. Notice that $\Omega(H)=H$ and $\Omega(\overline H)=\overline H$. We think of $\Omega$ as acting on~$\varphi_{D_n}(A)$ and $\varphi_{D_n}(B)$. For each $i\in(-[n-1])\cup[n-1]$, we readily compute that $\Omega(i^{(0)})=(c_Lc_Rc_L(i))^{(1)}$ and~$\Omega(i^{(1)})=(c_Lc_Rc_L(i))^{(0)}$. Combining these observations with \cref{thm:astexplicitD}, we find that $\varphi_{D_n}(B)$ is obtained from $\varphi_{D_n}(A)$ by reflecting all of the edges through~$M$ (and leaving the vertices unchanged). Note that we are heavily using the fact that~$\Omega$ preserves $H$ and $\overline H$; indeed, this guarantees that if $E$ and $E'$ are the transverse edges that are removed from $\varphi_{A_{2n-3}}(\widehat A)$ in the construction of~$\varphi_{D_n}(A)$, then $\Omega(E)$ and $\Omega(E')$ are still transverse. These are precisely the edges that are removed from $\varphi_{A_{2n-3}}(\widehat B)$ in the construction of $\varphi_{D_n}(B)$.  

We now know that $\varphi_{D_n}(B)$ is obtained from $\varphi_{D_n}(A)$ by reflecting all of the edges through $M$. It follows that $\varphi_{A_{2n-3}}(\widehat B)$ is obtained from $\varphi_{A_{2n-3}}(\widehat A)$ by reflecting all of the edges through $M$ (we are using the fact that $\varphi_{A_{2n-3}}(\widehat A)$ and $\varphi_{A_{2n-3}}(\widehat B)$ are noncrossing to see that they can be reconstructed uniquely from $\xi_{A_{2n-3}}(\widehat A)$ and $\xi_{A_{2n-3}}(\widehat B)$). We also know by \cref{lem:ReflectionA} that $\varphi_{A_{2n-3}}(\row\cdot \Rvac(\widehat A))$ is obtained from $\varphi_{A_{2n-3}}(\widehat A)$ by reflecting all of the edges through $M$. This means that $\varphi_{A_{2n-3}}(\widehat B)=\varphi_{A_{2n-3}}(\row\cdot \Rvac(\widehat A))$, so it follows from \cref{thm:explicitastA} that $\Theta_{W(A_{2n-3})}(\widehat B)=\Theta_{W(A_{2n-3})}(\row\cdot \Rvac(\widehat A))$. Because $\Theta_{W(A_{2n-3})}$ is a bijection, we must have $\widehat B=\row\cdot \Rvac(\widehat A)$, as desired. 
\end{proof}

\begin{figure}[ht]
\begin{center}
\includegraphics[height=3.61cm]{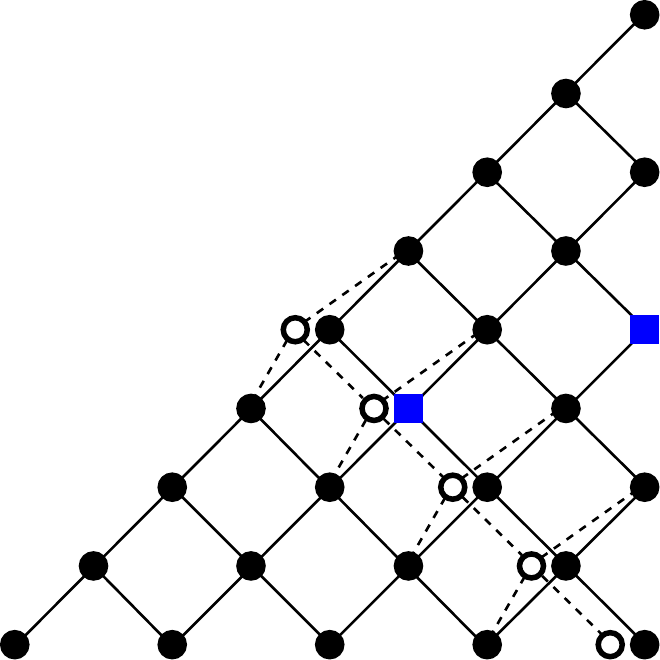}\hspace{1.7cm} \includegraphics[height=3.61cm]{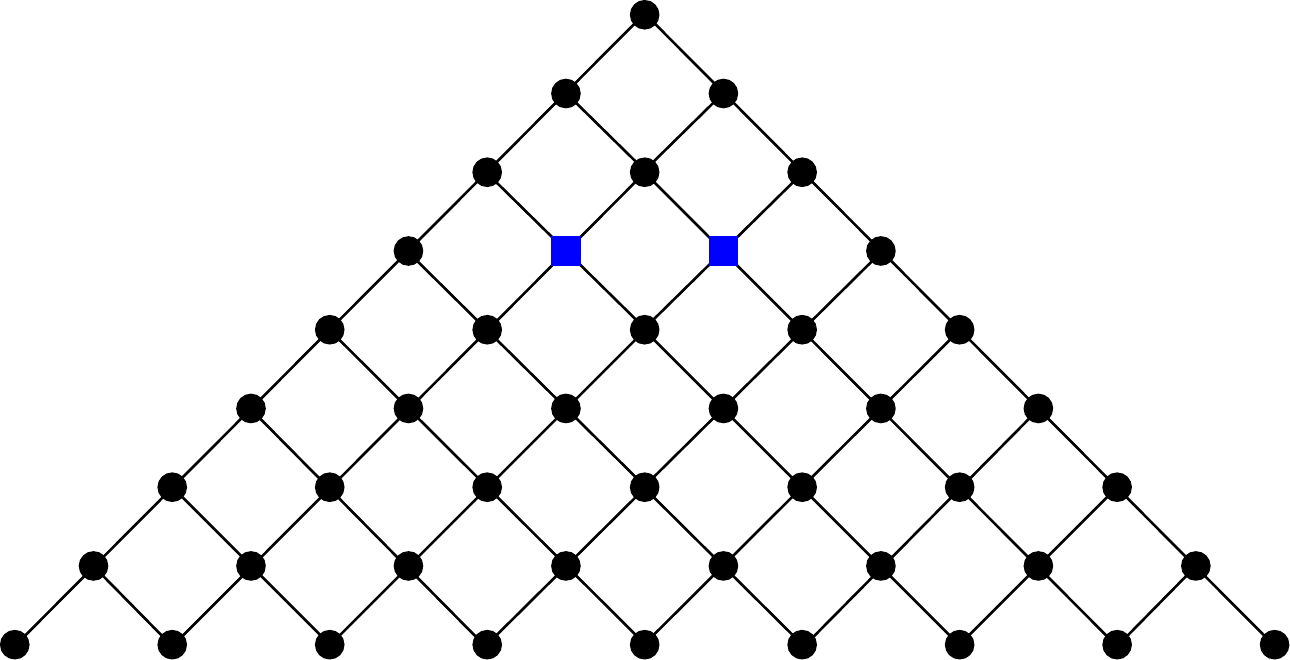} 
\end{center}
\caption{The antichains $\row\cdot \Rvac(A)$ (left) and $\row\cdot \Rvac(\widehat A)$ (right), where $A$ and $\widehat A$ are the antichains from \cref{exam:astconstruction1}.}\label{fig:finalexam}
\end{figure}

\begin{figure}[ht]
\begin{center}
\includegraphics[height=5.5cm]{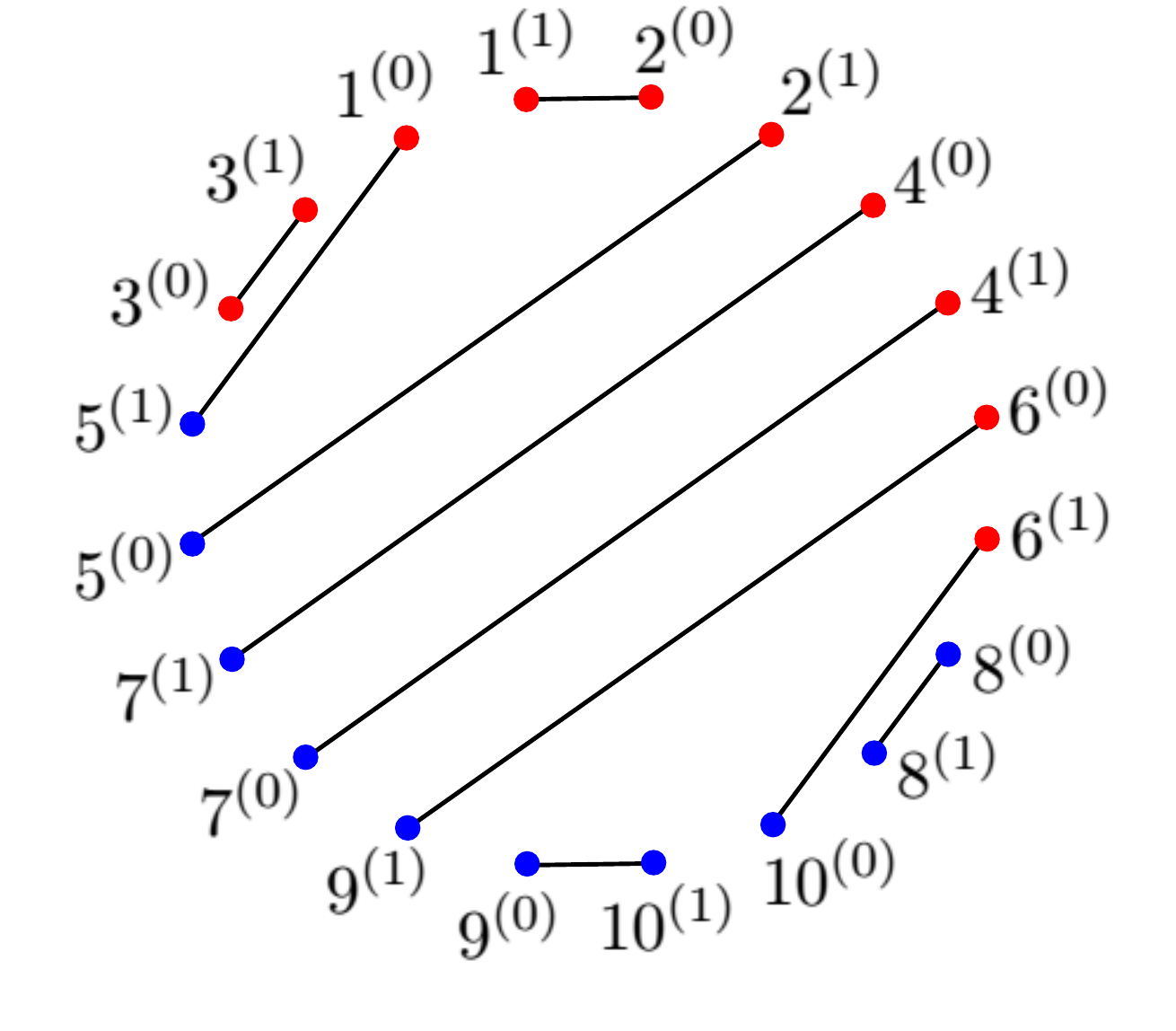}\hspace{1.28cm} \includegraphics[height=5.5cm]{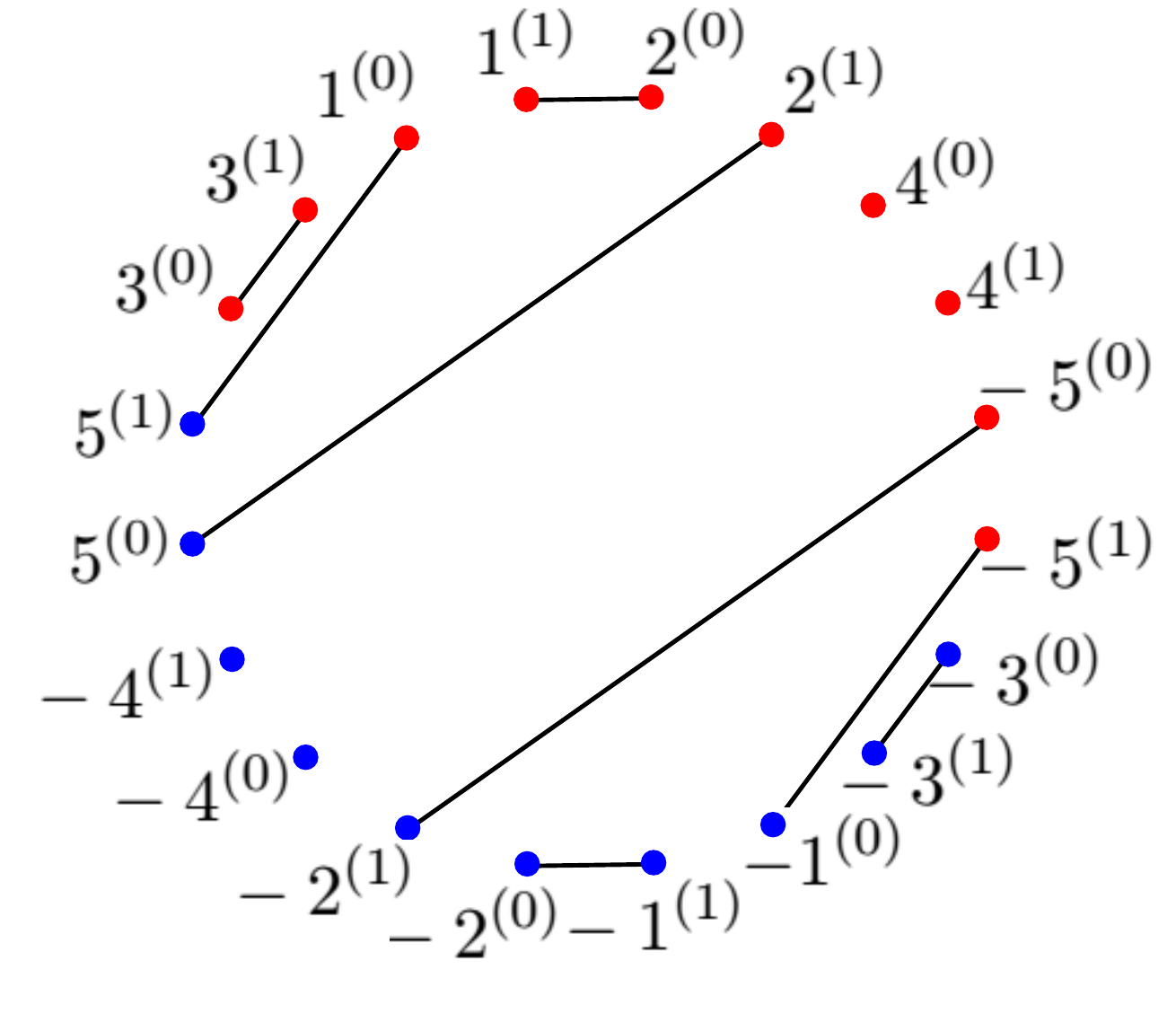}
\end{center}
\caption{The diagrams $\varphi_{A_9}(\widehat B)$ (left) and $\varphi_{D_6}(B)$ (right) from \cref{exam:final}.}\label{fig:examfig4}
\end{figure}

\begin{example}\label{exam:final}
Let $A\in\mathcal A(\Phi^+(D_6))$ and $\widehat A\in\mathcal A(\Phi^+(A_9))$ be the antichains from \cref{exam:astconstruction1}, as depicted in \cref{fig:examfig1}. Then $\row\cdot \Rvac(A)$ and $\row\cdot \Rvac(\widehat A)$ are the antichains shown in blue in \cref{fig:finalexam}. It is straightforward to check that $\row\cdot \Rvac(\widehat A)$ is the same as $\widehat{\row\cdot\Rvac(A)}$ in this case, as predicted by \cref{lem:HatCommutes}. Let $B=\row\cdot\Rvac(A)$. To help illustrate part of the proof of \cref{lem:HatCommutes}, we have drawn the diagrams $\varphi_{A_9}(\widehat B)$ and $\varphi_{D_6}(B)$ in \cref{fig:examfig4}. The diagrams $\varphi_{A_9}(\widehat A)$ and~$\varphi_{D_6}(A)$ are shown in \cref{fig:examfig3}. Notice that $\varphi_{A_9}(\widehat B)$ and $\varphi_{D_6}(B)$ are obtained by reflecting the edges in $\varphi_{A_9}(\widehat A)$ and $\varphi_{D_6}(A)$, respectively, through the line $M$.
\end{example}

Recall our notation $\alpha_1,\ldots,\alpha_n$ for the simple roots of $\Phi^+(D_n)$. We will now combine~\cref{lem:HatCommutes} with the following result of~\cite{armstrong2013uniform} to say that, for most $A$ with $\delta(A)\neq A$, we have $\widehat{\Rvac(A)} = \Rvac(\widehat{A})$.

\begin{lemma} \label{lem:HatRowCommutes}
Let $A\in\mathcal A(\Phi^+(D_n))$ be such that $\delta(A)\neq A$ and $\{\alpha_{n-1},\alpha_n\}\cap A = \varnothing$. Then 
\[\row(\widehat A)=\widehat{\row(A)}.\]
\end{lemma}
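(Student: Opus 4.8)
The plan is to verify the identity $\row(\widehat{A}) = \widehat{\row(A)}$ directly at the level of intervals, using the combinatorial description of $\widehat{A}$ from Lemma~3.9 of~\cite{armstrong2013uniform} and the explicit "closest-unmarked-vertex" formula for rowmotion on antichains. First I would set up notation: write $\iota(\gamma(A)) \subseteq \Phi^+(A_{2n-3})$ as a $\eta$-symmetric set of intervals, isolate the "transverse" intervals $[i_1,j_1],\dots,[i_k,j_k] \in \iota(\gamma(A)) \cap \mathcal{Q}$ in lexicographic order (with $k\geq 1$ since $\delta(A)\neq A$), and recall that $\widehat{A}$ is obtained by replacing these with the "shifted" intervals $[i_1,j_2],[i_2,j_3],\dots,[i_{k-1},j_k]$. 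The hypothesis $\{\alpha_{n-1},\alpha_n\}\cap A = \varnothing$ translates into a statement about which minimal elements of $\Phi^+(D_n)$ (equivalently, which of the two simple roots corresponding to the "folded" node) lie in $A$; I would track how this constraint controls the top of the transverse chain, so that the shifting operation producing $\widehat{A}$ behaves predictably under rowmotion and no degenerate wraparound occurs.

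The core of the argument is a case analysis on how $\row$ acts on each positive root of $\Phi^+(D_n)$ versus how $\row$ acts on the corresponding intervals of $\Phi^+(A_{2n-3})$. For roots not in the $\delta$-orbit of anything transverse — i.e., roots outside $\mathcal{Q}$ and their $\eta$-images — the quotient map $\gamma$ and unfolding map $\iota$ intertwine the rank function and the covering relations, so rowmotion on $\Phi^+(D_n)$ restricted away from the "glued" region matches rowmotion on $\Phi^+(A_{2n-3})$ restricted away from $\mathcal{Q}\cup\eta(\mathcal{Q})$; here one uses that $\row$ is computed locally via minimal elements not below the antichain. The delicate part is the region near $[n-1,n]$ (the minimal element of $\mathcal{Q}$): I would compute $\row(A)$ explicitly on the $\delta$-orbits meeting the transverse chain, then compute $\widehat{\row(A)}$ by re-extracting the transverse intervals of $\iota(\gamma(\row(A)))$ and re-shifting, and separately compute $\row(\widehat{A})$ using the min-formula on $\Phi^+(A_{2n-3})$, and check the two coincide interval-by-interval. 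The hypothesis that neither $\alpha_{n-1}$ nor $\alpha_n$ is in $A$ is exactly what guarantees $\row$ does not need to "create" a new transverse element at the very bottom in a way that would break the shifting pattern, or conversely that the number $k$ of transverse intervals changes in a controlled way compatible with the $\eta$-symmetry of $\widehat{A}$.

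The main obstacle I anticipate is bookkeeping the boundary behavior of the shift $[i_\ell,j_\ell]\mapsto[i_\ell,j_{\ell+1}]$ under rowmotion: rowmotion can merge, split, or slide the transverse chain, and I must check that after applying $\row$ to $A$ and then forming the hat, the resulting transverse intervals of $\iota(\gamma(\row(A)))$ are precisely the rowmotion-images (in the Type~A poset) of the transverse intervals of $\iota(\gamma(A))$, so that the two "shift" operations commute with $\row$. A clean way to organize this, which I would pursue, is to observe that $\widehat{A}$ and $\iota(\gamma(A))$ differ only in the comb of intervals lying over $[n-1,n]$, and to phrase the claim as: the involution-free operation "replace the $\mathcal{Q}$-part by its shift" is $\row$-equivariant, then reduce to the already-understood commutation $\row(\iota(C)) = \iota(\row(C))$ for $C \in \mathcal{A}(\Phi^+(C_{n-1}))$ (which follows from rowvacuation/rowmotion commuting with poset automorphisms and the folding being a quotient by $\eta$, cf.\ the discussion after \cref{thm:lk}) plus a short direct check that the shift step and $\row$ commute on the comb. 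If the general-position hypothesis on $\{\alpha_{n-1},\alpha_n\}$ rules out exactly the configurations where this comb check would fail, the lemma follows; pinning down that equivalence precisely is where the real work lies.
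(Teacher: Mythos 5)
Your proposal is a plan rather than a proof, and the plan defers exactly the step that constitutes the content of the lemma. You repeatedly flag the crucial verification (``the delicate part,'' ``the main obstacle,'' ``where the real work lies'') --- namely, checking interval-by-interval that the transverse intervals of $\iota(\gamma(\row(A)))$, after re-shifting, agree with the rowmotion images of the shifted transverse intervals in $\Phi^+(A_{2n-3})$ --- but you never carry it out, nor do you identify precisely how the hypothesis $\{\alpha_{n-1},\alpha_n\}\cap A=\varnothing$ enters. The paper does not redo this combinatorics at all: it quotes Lemma~3.16 of Armstrong--Stump--Thomas, which already asserts $\row(\widehat A)=\widehat{\row(A)}$ whenever $\delta(A)\neq A$ and $\widehat A\cap\mathcal{Q}\neq\varnothing$, and the entire proof consists of showing that the stated hypotheses force $\widehat A\cap\mathcal{Q}\neq\varnothing$: since $\delta(A)\neq A$ and neither $\alpha_{n-1}$ nor $\alpha_n$ lies in $A$, some $\alpha\in A\cap\mathcal{S}_{D_n}\setminus\{\alpha_{n-1},\alpha_n\}$ exists, so $\iota(\gamma(\alpha))$ contributes two elements to $\iota(\gamma(A))\cap\mathcal{Q}$, whence $\#(\widehat A\cap\mathcal{Q})=\#(\iota(\gamma(A))\cap\mathcal{Q})-1\geq 1$. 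Your proposal neither invokes this available result nor supplies a substitute for it, so as written it does not establish the lemma.

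There is also a concrete flaw in the reduction you sketch: you propose to fall back on a commutation $\row(\iota(C))=\iota(\row(C))$ for $C\in\mathcal{A}(\Phi^+(C_{n-1}))$ with $C=\gamma(A)$. But in the case at hand, $\delta(A)\neq A$, and then $\gamma(A)$ (equivalently $\iota(\gamma(A))$) is in general \emph{not} an antichain --- this is stated explicitly in \cref{Sec:ASTBijectionD} and visible in \cref{fig:examfig1} --- so there is no antichain of $\Phi^+(C_{n-1})$ to which the folding/automorphism argument applies, and rowmotion of $\iota(\gamma(A))$ is not even defined. Repairing this failure of antichain-ness is precisely what the shift $[i_\ell,j_\ell]\mapsto[i_\ell,j_{\ell+1}]$ in the definition of $\widehat A$ does, and the interaction of that shift with rowmotion is the nontrivial content (handled in~\cite{armstrong2013uniform}); it cannot be absorbed into a ``short direct check on the comb'' without actually doing that check, and your hypothesis-tracking for $\{\alpha_{n-1},\alpha_n\}$ would, if made precise, amount to the paper's observation that these hypotheses guarantee $\widehat A\cap\mathcal{Q}\neq\varnothing$.
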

\begin{proof}
This is basically Lemma 3.16 of~\cite{armstrong2013uniform}. Specifically, that lemma says that if $A\in\mathcal A(\Phi^+(D_n))$ is such that $\delta(A)\neq A$ and $\widehat{A}\cap \mathcal{Q} \neq \varnothing$, then $\row(\widehat A)=\widehat{\row(A)}$. Hence, we must show that $\widehat{A}\cap \mathcal{Q} \neq \varnothing$. Recall the definition of $\mathcal{S}_{D_n}$ from \cref{subsec:RootPosets}. Because $\delta(A)\neq A$ and $\{\alpha_{n-1},\alpha_n\}\cap A=\varnothing$, there must be some $\alpha \in A\cap\mathcal{S}_{D_n}$ that is not in $\{\alpha_{n-1},\alpha_n\}$. Then $\iota(\gamma(\alpha))$ consists of two elements belonging to $\mathcal{Q}$, so $\#(\iota(\gamma(A))\cap\mathcal Q)\geq 2$. It is then immediate from the definition of $\widehat{A}$ that $\#(\widehat{A}\cap \mathcal{Q}) = \#(\iota(\gamma(A))\cap \mathcal{Q})-1\geq 1$. 
\end{proof}

\begin{cor} \label{cor:HatCommutes}
If $A\in\mathcal A(\Phi^+(D_n))$ is such that $\delta(A)\neq A$ and $\{\alpha_{n-1},\alpha_n\}\cap A = \varnothing$, then 
\[\Rvac(\widehat A)=\widehat{\Rvac(A)}.\]
\end{cor}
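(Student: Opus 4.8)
The plan is to deduce this from \cref{lem:HatCommutes} and \cref{lem:HatRowCommutes} together with the ``conjugation'' identity $\row\cdot\Rvac\cdot\row=\Rvac$, which is immediate from \cref{prop:row_basics} (since $\Rvac\cdot\row=\row^{-1}\cdot\Rvac$ gives $\row\cdot\Rvac\cdot\row=\Rvac$). The naive attempt---write $\Rvac(\widehat A)=\row^{-1}\bigl(\row\cdot\Rvac(\widehat A)\bigr)=\row^{-1}\bigl(\widehat{\row\cdot\Rvac(A)}\bigr)$ using \cref{lem:HatCommutes}, and then pull the hat through $\row^{-1}$ by applying \cref{lem:HatRowCommutes} to $\Rvac(A)$---does not work, because there is no reason for $\Rvac(A)$ to satisfy $\{\alpha_{n-1},\alpha_n\}\cap\Rvac(A)=\varnothing$ (indeed this can fail). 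The fix is to sandwich $\Rvac$ between two copies of $\row$, so that \cref{lem:HatRowCommutes} is only ever invoked on $A$ itself, where its hypotheses are guaranteed by the corollary's hypotheses.

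Concretely, I would establish the following chain of equalities, where throughout $\row$ and $\Rvac$ act on $\mathcal A(\Phi^+(A_{2n-3}))$ except inside a hat $\widehat{\,\cdot\,}$, where they act on $\mathcal A(\Phi^+(D_n))$:
\[\Rvac(\widehat A)=\row\bigl(\Rvac\bigl(\row(\widehat A)\bigr)\bigr)=\row\bigl(\Rvac\bigl(\widehat{\row(A)}\bigr)\bigr)=\widehat{\row\bigl(\Rvac(\row(A))\bigr)}=\widehat{\Rvac(A)}.\]
Here the first equality is the conjugation identity $\Rvac=\row\cdot\Rvac\cdot\row$ on $\mathcal A(\Phi^+(A_{2n-3}))$; the second uses \cref{lem:HatRowCommutes} applied to $A$, whose hypotheses $\delta(A)\neq A$ and $\{\alpha_{n-1},\alpha_n\}\cap A=\varnothing$ are exactly those of the corollary; the third uses \cref{lem:HatCommutes} applied to the antichain $\row(A)$, which is legitimate since $\row$ commutes with $\delta$ and is a bijection, so $\delta(\row(A))=\row(\delta(A))\neq\row(A)$; and the last equality is the conjugation identity $\Rvac=\row\cdot\Rvac\cdot\row$ on $\mathcal A(\Phi^+(D_n))$.

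I do not expect a genuine obstacle here---once the chain above is arranged correctly, the argument is essentially formal---so the ``hard part'' is really the bookkeeping: keeping track of which of the two posets $\Phi^+(D_n)$ or $\Phi^+(A_{2n-3})$ each instance of $\row$ and $\Rvac$ acts on, checking that $\widehat{\,\cdot\,}$ is defined at every intermediate antichain (each arises from $A$ by rowmotions and rowvacuations and hence stays off the $\delta$-fixed locus, as noted just before \cref{lem:HatCommutes}), and noting that \cref{lem:HatCommutes} never requires the $\{\alpha_{n-1},\alpha_n\}$-avoidance hypothesis. The conceptual point worth stressing is that this argument never needs to control which simple roots lie in $\Rvac(A)$, which is precisely the thing that obstructs the naive approach.
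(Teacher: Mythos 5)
Your proof is correct and is essentially the paper's own argument: both apply \cref{lem:HatRowCommutes} to $A$, then \cref{lem:HatCommutes} to $\row(A)$ (noting $\delta(\row(A))\neq\row(A)$ since rowmotion commutes with $\delta$), and finish with the dihedral relation $\row\cdot\Rvac\cdot\row=\Rvac$ from \cref{prop:row_basics}. The bookkeeping points you flag (which poset each operator acts on, and that the $\{\alpha_{n-1},\alpha_n\}$-avoidance is only needed for \cref{lem:HatRowCommutes}) are exactly right.
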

\begin{proof}
From~\cref{lem:HatRowCommutes} we have
\[ \row(\widehat{A}) = \widehat{\row(A)}.\]
As mentioned above, rowmotion commutes with poset automorphisms, so it is still the case that $\delta(\row(A)) \neq \row(A)$. Then we can apply~\cref{lem:HatCommutes} to say that
\[\row\cdot \Rvac\cdot \row(\widehat A)= \row\cdot \Rvac (\widehat{\row(A)}) =\widehat{\row\cdot \Rvac\cdot \row(A)}.\]
The result follows from~\cref{prop:row_basics}, which says $\row\cdot \Rvac\cdot \row=\Rvac$.
\end{proof}

\begin{figure}[ht]
\begin{center}
\includegraphics[height=3.61cm]{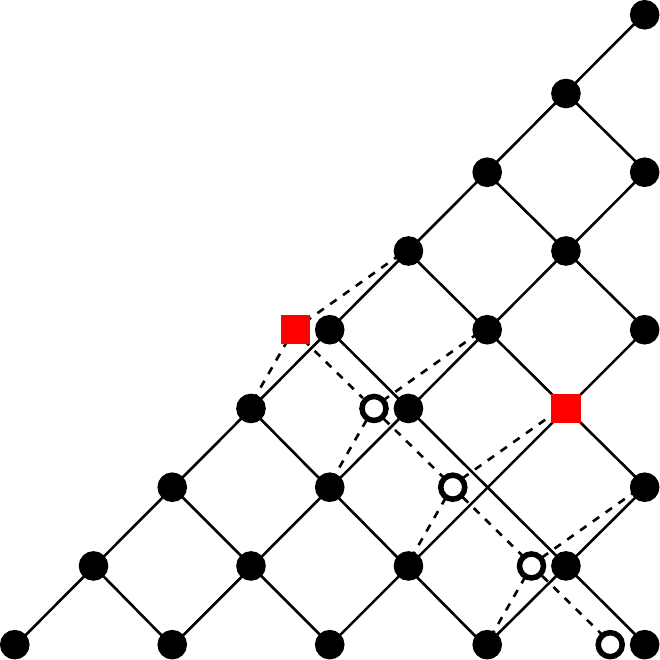}\hspace{1.7cm} \includegraphics[height=3.61cm]{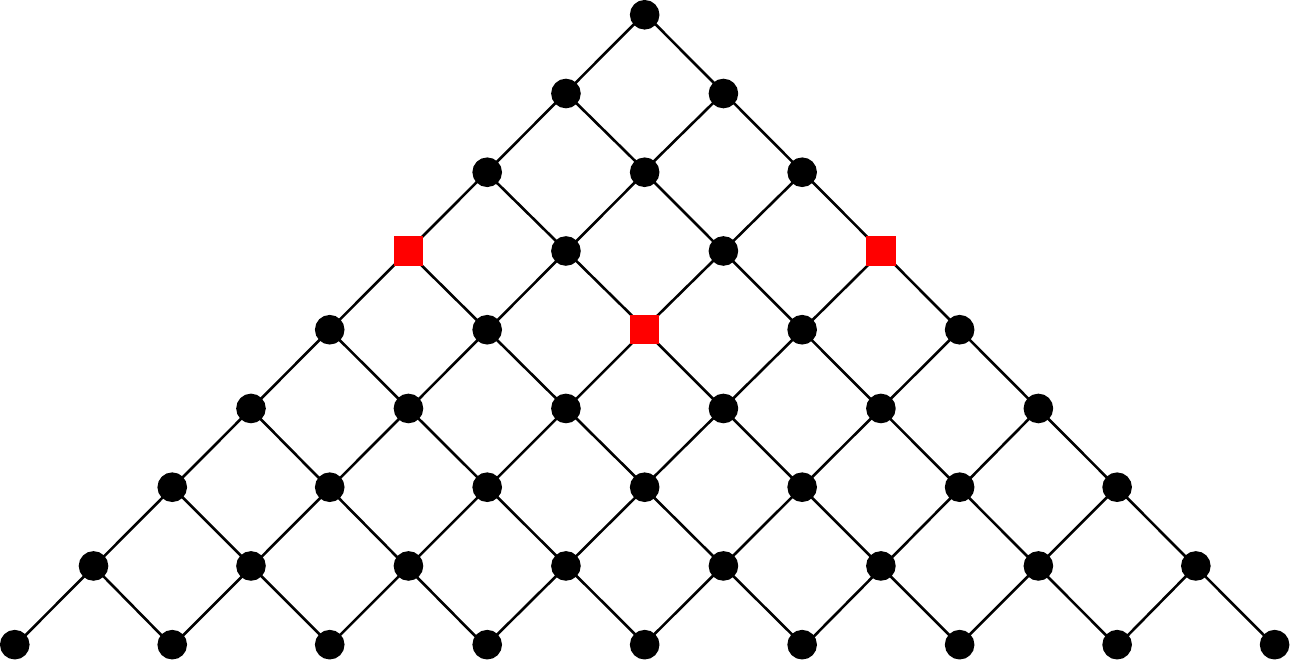} \caption{The antichains $\Rvac(A)$ (left) and $\Rvac(\widehat A)$ (right), where $A$ and $\widehat A$ are the antichains from \cref{exam:astconstruction1}.}\label{fig:finalfinalexam}
\end{center}
\end{figure}

\begin{example}
Let $A\in\mathcal A(\Phi^+(D_6))$ and $\widehat A\in\mathcal A(\Phi^+(A_9))$ be the antichains from \cref{exam:astconstruction1}, as depicted in \cref{fig:examfig1}. Then $\Rvac(A)$ and $\Rvac(\widehat A)$ are the antichains shown in red in \cref{fig:finalfinalexam}. It is straightforward to check that $\Rvac(\widehat A)$ is the same as $\widehat{\Rvac(A)}$ in this case, as predicted by \cref{cor:HatCommutes}.
\end{example}

\begin{remark}
The assumption $\{\alpha_{n-1},\alpha_n\}\cap A = \varnothing$ in~\cref{cor:HatCommutes} is required. For example, if $A=\{\alpha_n\}$, then $\widehat{A}=\varnothing$, so $\Rvac(\widehat{A})$ consists of all minimal elements of $\Phi^+({A_{2n-3}})$; but $\widehat{\Rvac(A)} = \widehat{\{\alpha_1,\ldots,\alpha_{n-1}\}}$ consists of all but one of the minimal elements of $\Phi^+({A_{2n-3}})$ (it is missing the ``middle'' simple root $[n-1,n]$).
\end{remark}

\section{Proof of Panyushev's Conjecture in Type D}\label{Sec:MainProof}

Recall from~\cref{subsec:pan_conj} that our goal is to prove the following theorem. 

\begin{thm}\label{thm:PanyushevD}
For every $A\in\mathcal A(\Phi^+(D_n))$, we have $\#A+\#\Rvac(A)=n$. 
\end{thm}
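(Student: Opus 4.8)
The plan is to split the antichains $A\in\mathcal A(\Phi^+(D_n))$ into two cases according to whether $\delta(A)=A$ or $\delta(A)\neq A$, and to reduce each case to the already-established Type~C/Type~A results.

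\emph{Case 1: $\delta(A)=A$.} In this case $A$ descends to an antichain $\gamma(A)\in\mathcal A(\Phi^+(C_{n-1}))$, and $\gamma$ is a quotient of posets, so rowvacuation commutes with $\gamma$ in the sense that $\gamma(\Rvac_{D_n}(A))=\Rvac_{C_{n-1}}(\gamma(A))$ (here I would need a short argument that the quotient map induced by the diagram automorphism $\delta$ intertwines the rank-toggle descriptions of $\Rvac$; this is the kind of ``folding respects toggles'' statement that also underlies the Type~B/C story). One then has to relate $\#A$ to $\#\gamma(A)$: since $\delta(A)=A$, each element of $\gamma(A)$ either lifts to a single $\delta$-fixed element or to a $\delta$-orbit of size two, and I would use the structure of $\mathcal S_{D_n}=\gamma^{-1}(\mathcal S_{C_{n-1}})$ to count, ultimately deducing $\#A+\#\Rvac(A)=n$ from Panyushev's already-proven identity $\#\gamma(A)+\#\Rvac(\gamma(A))=n-1$ in Type~C (together with a careful accounting of how the ``$+1$'' arises and how $\delta$-fixed versus $\delta$-swapped elements contribute). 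Some care is needed here because a $\delta$-fixed antichain can still contain a $\delta$-orbit of size two in its preimage — wait, no: if $\delta(A)=A$ and $\alpha\in A$ with $\delta(\alpha)\neq\alpha$, then $\delta(\alpha)\in A$ too and $\{\alpha,\delta(\alpha)\}$ is an antichain, which forces these to be among the $\mathcal S$-type roots; I would organize the bookkeeping around exactly this dichotomy.

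\emph{Case 2: $\delta(A)\neq A$.} This is where the machinery of \cref{Sec:ASTBijectionD} does the work. Using \cref{cor:HatCommutes}, if additionally $\{\alpha_{n-1},\alpha_n\}\cap A=\varnothing$ we get $\Rvac(\widehat A)=\widehat{\Rvac(A)}$, and then \cref{thm:lk} gives $\#\widehat A+\#\widehat{\Rvac(A)}=2n-2$. So the remaining task is to relate $\#A$ to $\#\widehat A$: by construction $\widehat A$ is obtained from $\iota(\gamma(A))$ by deleting the $k\geq 1$ elements of $\iota(\gamma(A))\cap\mathcal Q$ and inserting $k-1$ new elements, while $\#\iota(\gamma(A))$ relates to $\#A$ in a way controlled by how many elements of $A$ lie in $\mathcal S_{D_n}$ (each such contributes two elements to $\iota(\gamma(A))$, each other contributes one). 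Tracking these counts — and doing the same for $\Rvac(A)$ via $\Rvac(\widehat A)$ — should yield $\#A+\#\Rvac(A)=n$ after the $2n-2$ from Type~A is corrected by the various $\pm1$'s and by the fact that $\delta(A)\neq A$ forces $k\geq 1$ on both sides. Finally I must handle the sub-case $\{\alpha_{n-1},\alpha_n\}\cap A\neq\varnothing$ (where \cref{cor:HatCommutes} fails): since $A$ is an antichain and $\alpha_{n-1},\alpha_n$ are incomparable minimal elements, at most one of them lies in $A$; up to applying $\delta$ (which commutes with $\Rvac$ and preserves cardinalities) say $\alpha_n\in A$. Then by property (iv)/(v) of Panyushev's conjecture for $\Rvac$, already proven as \cref{prop:rvac_parabolic}, we can pass to the maximal parabolic obtained by deleting $\alpha_n$, which is a Type~$A_{n-1}$ root poset, and conclude $\#A+\#\Rvac(A)=1+(\#(A\setminus\{\alpha_n\})+\#\Rvac_{\Phi'}(A\setminus\{\alpha_n\}))=1+(n-1)=n$ by \cref{thm:lk}.

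\emph{Main obstacle.} The genuinely delicate part is the cardinality bookkeeping in Case~2 (and its analogue in Case~1): relating $\#A$ to $\#\widehat A$ requires knowing precisely how many elements of $A$ fall into $\mathcal S_{D_n}$, how the $\mathcal Q$-elements of $\iota(\gamma(A))$ get collapsed in forming $\widehat A$, and how all of this behaves simultaneously for $A$ and for $\Rvac(A)$ — and the ``$-1$'' coming from the replacement of $k$ elements by $k-1$ has to cancel correctly against the shift between $\Phi^+(A_{2n-3})$ having rank $2n-3$ and $\Phi^+(D_n)$ having rank $n-1$. I expect this to be where essentially all the combinatorial content lies; once the count $\#A=\#\widehat A - (\text{explicit correction})$ is nailed down and shown to be compatible with $\Rvac$, plugging into \cref{thm:lk} finishes the proof. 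I would also double-check the edge cases where $k=1$ (so no new elements are inserted) and where $A$ meets $\mathcal S_{D_n}$ only in $\{\alpha_{n-1},\alpha_n\}$, since those sit exactly at the boundary between the two cases above.
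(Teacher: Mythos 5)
Your overall strategy is the same as the paper's: split on $\delta(A)=A$ versus $\delta(A)\neq A$, fold to Type~C via $\gamma$ in the first case, use $\widehat{A}$ and \cref{cor:HatCommutes} in the second, and dispose of the subcase $\{\alpha_{n-1},\alpha_n\}\cap A\neq\varnothing$ by parabolic induction (\cref{prop:rvac_parabolic}) plus the Type~A result; that last subcase you carry out completely and it matches \cref{lem:delta(A)neqA2}. The commutation $\gamma(\Rvac(A))=\Rvac_{\Phi^+(C_{n-1})}(\gamma(A))$ that you flag as needing an argument is indeed routine, and is not where the difficulty lies.

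The genuine gap is in the cardinality bookkeeping that you defer in both main cases, because closing it requires a specific lemma you never identify, not just careful counting. In the case $\delta(A)\neq A$, $\{\alpha_{n-1},\alpha_n\}\cap A=\varnothing$, the correct relation is $\#\widehat{A}=2\#A-2+\varepsilon(\widehat{A})$, where $\varepsilon(\widehat{A})\in\{0,1\}$ records whether $\widehat{A}$ meets $\mathcal{L}_{A_{2n-3}}$ (equivalently, whether $A$ misses $\mathcal{L}_{D_n}$), and likewise for $\Rvac(A)$; moreover \cref{thm:lk} gives $\#\widehat{A}+\#\Rvac_{\Phi^+(A_{2n-3})}(\widehat{A})=2n-3$, not $2n-2$ as you wrote ($\Phi^+(A_{2n-3})$ has rank $2n-3$). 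Since $2n-3$ is odd while $2(\#A+\#\Rvac(A))-4$ is even, the argument cannot close unless you prove that \emph{exactly one} of $\widehat{A}$ and $\Rvac_{\Phi^+(A_{2n-3})}(\widehat{A})$ contains an element of $\mathcal{L}_{A_{2n-3}}$; similarly, in the case $\delta(A)=A$ you need that exactly one of $\gamma(A)$ and $\Rvac_{\Phi^+(C_{n-1})}(\gamma(A))$ contains an element of $\mathcal{S}_{C_{n-1}}$ (here one also uses that $\mathcal{S}_{C_{n-1}}$ is a chain, so ``at most one element'' is automatic). These distributional facts for $\eta$-symmetric Type~A antichains are \cref{prop:typec_dist,prop:typeb_dist} in the paper --- they follow from the explicit formula of \cref{thm:lk}, or from Panyushev's property~(ii) in Types~B/C --- and nothing in your proposal supplies them; hoping that ``the various $\pm1$'s'' cancel is exactly the point at issue, so as written the two main cases are not proved.
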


In proving~\cref{thm:PanyushevD}, we will make heavy use of \cref{thm:main_intro} for Types~A and~C. As a consequence, we will need to consider the action of rowvacuation on different posets. As before, we will use subscripts as in the notation $\Rvac_P$ when referring to an operator on $\mathcal A(P)$. We will primarily be concerned with the poset~$\Phi^+(D_n)$, so operators without subscripts will be assumed to act on~$\mathcal{A}(\Phi^+(D_n))$. 

Before we start with the proof of~\cref{thm:PanyushevD}, we need two simple propositions about rowvacuation of $\eta$-symmetric antichains in Type~A. Recall the definitions of the special subsets $\mathcal{L}$, $\mathcal{S}$ of the root posets of classical types from~\cref{subsec:RootPosets}.

\begin{prop} \label{prop:typec_dist}
Let $B\in\mathcal{A}(\Phi^+(A_{2n-1}))$ be such that $\eta(B)=B$. Then exactly one of $B$ and $\Rvac_{\Phi^+(A_{2n-1})}(B)$ contains an element of $\mathcal{L}_{A_{2n-1}}$.
\end{prop}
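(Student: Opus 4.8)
## Proof proposal for Proposition~\ref{prop:typec_dist}

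The plan is to exploit the explicit Type~A rowvacuation formula from \cref{thm:lk} together with the $\eta$-symmetry of $B$. Write $B = \{[i_1,j_1],\ldots,[i_k,j_k]\}$ with $i_1 < \cdots < i_k$ and $j_1 < \cdots < j_k$, working inside $\Phi^+(A_{2n-1})$, so the ground set of indices is $[1,2n]$, and $\mathcal{L}_{A_{2n-1}} = \{[i,2n+1-i] : 1 \le i \le n\}$. The key observation is that an interval $[i,j]$ lies in $\mathcal{L}_{A_{2n-1}}$ exactly when $i + j = 2n+1$, i.e.\ when it is fixed by $\eta$. Since $\eta(B) = B$ and $\eta$ is an involution on intervals that is order-reversing on neither coordinate in a way that matters here, the multiset $\{i_1,\ldots,i_k\}$ and the multiset $\{2n+1-j_1,\ldots,2n+1-j_k\}$ coincide (as $\eta([i,j]) = [2n+1-j,\,2n+1-i]$ and $\eta$ permutes the elements of $B$). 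Consequently $\sum_\ell i_\ell = \sum_\ell (2n+1-j_\ell)$, which gives the clean identity $\sum_\ell (i_\ell + j_\ell) = k(2n+1)$. In particular $B$ contains an element of $\mathcal{L}_{A_{2n-1}}$ iff $k$ is odd: if $k$ is even the $\eta$-orbits on $B$ all have size $2$, and if $k$ is odd there must be exactly one fixed point (two fixed points $[i,2n+1-i]$, $[i',2n+1-i']$ with $i<i'$ would be comparable, contradicting antichain-ness). So: $B$ has an $\mathcal{L}$-element iff $\#B$ is odd.

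Next I would apply the same dichotomy to $\Rvac_{\Phi^+(A_{2n-1})}(B)$. Since $\Rvac$ commutes with the poset automorphism $\eta$ (rowvacuation commutes with all poset automorphisms), $\Rvac(B)$ is again $\eta$-symmetric, so by the paragraph above $\Rvac(B)$ contains an $\mathcal{L}$-element iff $\#\Rvac(B)$ is odd. Now invoke the consequence of \cref{thm:lk} noted immediately after it: $\#B + \#\Rvac_{\Phi^+(A_{2n-1})}(B) = 2n-1$, which is odd. Therefore exactly one of $\#B$ and $\#\Rvac(B)$ is odd, hence exactly one of $B$ and $\Rvac(B)$ contains an element of $\mathcal{L}_{A_{2n-1}}$, which is precisely the claim.

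The only point requiring a little care — and the likely main obstacle — is justifying rigorously that an $\eta$-symmetric antichain contains an $\mathcal{L}$-element precisely when its cardinality is odd. The forward direction ($\mathcal{L}$-element present $\Rightarrow$ odd cardinality) is clear since removing the unique fixed interval (uniqueness as argued above: two fixed intervals in an antichain are nested) leaves an $\eta$-free antichain of even size. For the converse I should double-check that an antichain cannot contain \emph{zero} fixed intervals while having odd cardinality — but that is immediate, since $\eta$ acting freely partitions $B$ into $2$-element orbits, forcing $\#B$ even. So the argument is complete, and the whole proposition reduces to the parity bookkeeping above plus the already-established facts that $\Rvac$ commutes with $\eta$ and that $\#B + \#\Rvac(B) = 2n-1$.
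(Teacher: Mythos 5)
Your proof is correct. The paper itself disposes of this proposition with a remark: it observes that the statement is the Type~C instance of \cref{conj:pan}(ii) already established by Panyushev, and that it is alternatively ``immediate from the formula'' in \cref{thm:lk}, presumably by inspecting directly which intervals $[i,j]$ of the image satisfy $i+j=2n+1$. Your route is different in mechanism and rather more conceptual: you never touch the explicit formula beyond its corollary $\#B+\#\Rvac_{\Phi^+(A_{2n-1})}(B)=2n-1$, and instead observe that the elements of $\mathcal{L}_{A_{2n-1}}$ are exactly the $\eta$-fixed intervals and form a chain, so an $\eta$-symmetric antichain meets $\mathcal{L}_{A_{2n-1}}$ precisely when its cardinality is odd; combined with $\eta$-equivariance of $\Rvac$ and the odd constant sum, the claim is pure parity bookkeeping. (The sum identity $\sum_\ell(i_\ell+j_\ell)=k(2n+1)$ in your first paragraph is not actually needed; the orbit argument does all the work.) What your approach buys is robustness: it would apply verbatim to any ranked poset with an involutive automorphism whose fixed-point set is a chain and on which $\#A+\#\Rvac(A)$ is an odd constant. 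What it does not buy is the companion statement \cref{prop:typeb_dist}, since the set $\mathcal{S}_{A_{2n-1}}$ is not the fixed-point set of $\eta$, so there the direct use of \cref{thm:lk} (or Panyushev's Type~B computation) remains necessary; your argument is genuinely specific to $\mathcal{L}_{A_{2n-1}}$, which is fine for the statement at hand.
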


\begin{prop} \label{prop:typeb_dist}
Let $B\in\mathcal{A}(\Phi^+(A_{2n-1}))$ be such that $\eta(B)=B$. Then exactly one of $B$ and $\Rvac_{\Phi^+(A_{2n-1})}(B)$ contains an element of $\mathcal{S}_{A_{2n-1}}$.
\end{prop}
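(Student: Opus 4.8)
The plan is to reduce both propositions to the explicit Lalanne--Kreweras formula for $\Rvac_{\Phi^+(A_{2n-1})}$ from \Cref{thm:lk}, using the fact that $\Rvac$ commutes with the automorphism $\eta = -w_0$ so that $\Rvac_{\Phi^+(A_{2n-1})}(B)$ is again $\eta$-symmetric. Recall $\mathcal{S}_{A_{2n-1}}$ consists of those $[i,j]$ with $i = n$ or $j = n+1$, and $\mathcal{L}_{A_{2n-1}} = \{[i, 2n+1-i]\colon 1 \le i \le n\}$. First I would set up notation: write $B = \{[i_1,j_1],\dots,[i_k,j_k]\}$ with $i_1 < \cdots < i_k$ and $j_1 < \cdots < j_k$, and let $I = \{i_1,\dots,i_k\}$, $J = \{j_1,\dots,j_k\}$. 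The $\eta$-symmetry $\eta(B) = B$ translates, via $\eta([i,j]) = [2n+1-j, 2n+1-i]$, into the condition $I = \{2n+1-j\colon j \in J\}$, i.e. $I$ and $(2n+1) - J$ are the same subset of $[2n-1]$ (in particular $\#I = \#J = k$, consistent with $B$ being an antichain). By \Cref{thm:lk}, $\Rvac_{\Phi^+(A_{2n-1})}(B) = \{[i'_1,j'_1],\dots\}$ where $I' := \{i'_\ell\} = [2n-1]\setminus (J - 1)$ and $J' := \{j'_\ell\} = [2,2n]\setminus (I+1)$.

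The key observation for \Cref{prop:typeb_dist} is to rephrase "$B$ contains an element of $\mathcal{S}_{A_{2n-1}}$" purely in terms of $I$ and $J$: it means $n \in I$ or $n+1 \in J$. By the $\eta$-symmetry relation $I = (2n+1)-J$, we have $n \in I \iff n+1 \in J$, so in fact $B$ meets $\mathcal{S}_{A_{2n-1}}$ exactly when $n \in I$ (equivalently $n+1 \in J$). Likewise $\Rvac(B)$ meets $\mathcal{S}_{A_{2n-1}}$ exactly when $n \in I'$. Now I compute: $n \in I' = [2n-1]\setminus(J-1)$ iff $n+1 \notin J$. Combining, $B$ meets $\mathcal{S}_{A_{2n-1}}$ iff $n+1 \in J$, and $\Rvac(B)$ meets $\mathcal{S}_{A_{2n-1}}$ iff $n+1 \notin J$ — these are complementary, which is exactly the claim. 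The argument for \Cref{prop:typec_dist} is parallel but uses the "diagonal" description of $\mathcal{L}_{A_{2n-1}}$: "$B$ meets $\mathcal{L}_{A_{2n-1}}$" means there is $\ell$ with $i_\ell + j_\ell = 2n+1$; I would show, again using the $\eta$-symmetry and \Cref{thm:lk}, that this holds for $B$ iff a certain parity/membership condition on $I$ (or on $J$) holds, and that the corresponding condition for $\Rvac(B)$ is its negation. Concretely, $\eta$-symmetry pairs up $[i_\ell, j_\ell]$ with $[2n+1-j_\ell, 2n+1-i_\ell] = [i_{k+1-\ell}, j_{k+1-\ell}]$, so an element on the diagonal exists iff $k$ is odd and the central element $[i_{(k+1)/2}, j_{(k+1)/2}]$ is its own $\eta$-image; I would track how the parity of $k$ (equivalently, of $\#I$) changes under $\Rvac$, since $\#\Rvac(B) = (2n-1) - \#B$ and $2n-1$ is odd, so the parities of $\#B$ and $\#\Rvac(B)$ are indeed opposite.

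I expect the main obstacle to be the bookkeeping in \Cref{prop:typec_dist}: "contains a diagonal element" is not simply a membership condition on $I$ alone, so I need to carefully combine the parity of $k$ with the structure forced by $\eta$-symmetry, and verify that when $k$ is odd the central interval is automatically $\eta$-fixed (this should follow because the $\eta$-action on the sorted list of intervals reverses it). A secondary subtlety is confirming that the $\eta$-symmetry of $\Rvac(B)$, which I invoke at the start, holds without circularity — but this is immediate since $\eta$ is a poset automorphism and $\Rvac$ commutes with all poset automorphisms, as noted just after \Cref{thm:lk}. Once these two points are pinned down, both proofs are short case-free computations with the index sets $I, J, I', J'$.
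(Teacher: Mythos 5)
Your argument is correct and is exactly the route the paper takes: the paper proves \cref{prop:typeb_dist} by observing it is immediate from the Type~A rowvacuation formula of \cref{thm:lk}, and your index-set computation ($B$ meets $\mathcal{S}_{A_{2n-1}}$ iff $n+1\in J$, while $\Rvac(B)$ meets it iff $n+1\notin J$) just spells out that immediacy. The only difference is that the paper also notes the statement can alternatively be deduced from Panyushev's Type~B analysis, which you do not need.
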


\begin{proof}[Proofs of~\cref{prop:typec_dist,prop:typeb_dist}]
Note that~\cref{prop:typec_dist} can be seen as the Type~C case of~\cref{conj:pan}(ii), i.e., the fact that $\iota^{-1}(B)\cup\Rvac_{\Phi^+(C_n)}(\iota^{-1}(B))$ has the right distribution of long and short roots. Similarly,~\cref{prop:typeb_dist} can be seen as the Type~B case of~\cref{conj:pan}(ii), i.e., the fact that $\Rvac$ on the Type~B root poset has the right distribution of long and short roots. So they can be thought of as consequences of Panyushev's work in~\cite[\S5]{panyushev2004adnilpotent}. However, they are also both immediate from the formula for Type~A rowvacuation in~\cref{thm:lk}.
\end{proof}

Our proof of~\cref{thm:PanyushevD} will consist of several cases. We first dispense with the case of \cref{thm:PanyushevD} in which $\delta(A)=A$. 

\begin{lemma}\label{lem:delta(A)=A}
If $A\in\mathcal A(\Phi^+(D_n))$ is such that $\delta(A)=A$, then $\#A+\#\Rvac(A)=n$. 
\end{lemma}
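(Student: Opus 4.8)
The plan is to reduce the $\delta$-invariant case to the already-established Type~C instance of \cref{thm:main_intro} via the quotient map $\gamma\colon\Phi^+(D_n)\to\Phi^+(C_{n-1})$. Since $\Rvac$ commutes with the poset automorphism $\delta$, the antichain $\Rvac(A)$ is again $\delta$-invariant, so both $A$ and $\Rvac(A)$ are full $\gamma$-preimages of antichains of $\Phi^+(C_{n-1})$. Writing $\bar A\coloneqq\gamma(A)\in\mathcal A(\Phi^+(C_{n-1}))$, we have $A=\gamma^{-1}(\bar A)$, and since $\gamma^{-1}(\beta)$ has two elements exactly when $\beta\in\mathcal S_{C_{n-1}}$ and one element otherwise, I get
\[\#A=\#\bar A+\#(\bar A\cap\mathcal S_{C_{n-1}}),\qquad \#\Rvac(A)=\#\gamma(\Rvac(A))+\#(\gamma(\Rvac(A))\cap\mathcal S_{C_{n-1}}).\]

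The crux is to show that $\gamma$ carries $\Rvac$ on $\delta$-invariant antichains of $\Phi^+(D_n)$ to $\Rvac$ on $\mathcal A(\Phi^+(C_{n-1}))$, i.e.\ that $\gamma(\Rvac(A))=\Rvac_{\Phi^+(C_{n-1})}(\bar A)$. Since $\gamma$ is a rank-preserving surjective morphism of posets whose fibers over rank $i$ are precisely the $\delta$-orbits inside $\Phi^+(D_n)_i$, each rank toggle $\boldsymbol\tau_i$ of $\Phi^+(D_n)$ preserves the set of $\delta$-invariant antichains and, under the identification $B\leftrightarrow\gamma(B)$, becomes the rank toggle $\boldsymbol\tau_i$ of $\Phi^+(C_{n-1})$; the one point needing care is that $B\cup\gamma^{-1}(\beta)$ is an antichain of $\Phi^+(D_n)$ if and only if $\gamma(B)\cup\{\beta\}$ is an antichain of $\Phi^+(C_{n-1})$, which follows because $\gamma$ both preserves and reflects the order relation. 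Because $\Rvac$ is the same word in the rank toggles for both posets, the claim follows. I expect this compatibility of folding with rank toggles to be the main (if not especially deep) obstacle, as it requires reading the order-theoretic behaviour of $\gamma$ off the explicit model in \cref{subsec:RootPosets}.

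With that in hand, the already-known Type~C case of \cref{thm:main_intro} gives $\#\bar A+\#\gamma(\Rvac(A))=n-1$, so it only remains to check that $\#(\bar A\cap\mathcal S_{C_{n-1}})+\#(\gamma(\Rvac(A))\cap\mathcal S_{C_{n-1}})=1$. First I would observe that $\mathcal S_{C_{n-1}}$ is a chain: its elements are the $n-1$ distinct images $\gamma(\{e_i-e_n,e_i+e_n\})$ for $1\le i\le n-1$, and since $e_i-e_n\le e_j-e_n$ in $\Phi^+(D_n)$ whenever $j\le i$, these are totally ordered; hence any antichain of $\Phi^+(C_{n-1})$ meets $\mathcal S_{C_{n-1}}$ in at most one element, so each of the two terms above is $0$ or $1$. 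Then, passing through the unfolding $\iota$, the antichain $\iota(\bar A)$ is $\eta$-invariant in $\Phi^+(A_{2n-3})$, the map $\iota$ intertwines rowvacuation (as recorded after \cref{thm:lk}), and $\mathcal S_{C_{n-1}}=\iota^{-1}(\mathcal S_{A_{2n-3}})$, so \cref{prop:typeb_dist} applied in $\Phi^+(A_{2n-3})$ (i.e.\ with $n$ replaced by $n-1$) shows that exactly one of $\bar A$ and $\Rvac_{\Phi^+(C_{n-1})}(\bar A)=\gamma(\Rvac(A))$ contains an element of $\mathcal S_{C_{n-1}}$. Adding the two identities yields $\#A+\#\Rvac(A)=(n-1)+1=n$.
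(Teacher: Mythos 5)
Your proof is correct and is essentially the paper's own argument: pass to $\bar A=\gamma(A)$, use that $\gamma$ intertwines $\Rvac$ with $\Rvac_{\Phi^+(C_{n-1})}$ on $\delta$-invariant antichains, apply the Type~C case of \cref{thm:main_intro}, and handle the contribution of the size-two fibers (i.e.\ of $\mathcal S_{C_{n-1}}$) by unfolding via $\iota$ and invoking \cref{prop:typeb_dist}. The only imprecision is your justification that $\gamma$ ``reflects the order relation'': taken literally this is false (e.g.\ in $\Phi^+(D_3)$ one has $\gamma(e_2+e_3)\leq\gamma(e_1-e_3)$ but $e_2+e_3\not\leq e_1-e_3$); what your toggle argument needs---and what does hold---is that two $\delta$-orbits are comparable in $\Phi^+(C_{n-1})$ exactly when some pair of representatives is comparable in $\Phi^+(D_n)$, which together with the $\delta$-invariance of the antichain gives the equivalence you want, a verification the paper itself leaves as ``one can easily check.''
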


\begin{proof}
Because $\mathcal{S}_{C_{n-1}}$ is a chain, each antichain of $\Phi^+(C_{n-1})$ contains at most one element of $\mathcal{S}_{C_{n-1}}$. Given an antichain~$B$ of $\Phi^+(C_{n-1})$, let $\varepsilon(B)$ be $1$ if $B$ contains an element of $\mathcal{S}_{C_{n-1}}$ and~$0$ otherwise. We have $\#\gamma^{-1}(B)=\#B+\varepsilon(B)$. 

Because rowvacuation commutes with the poset automorphism $\delta$, we have that $\delta(\Rvac(A))=\Rvac(A)$. One can easily check that~$\gamma(A)$ is an antichain of $\Phi^+(C_{n-1})$ for which $\Rvac_{\Phi^+(C_{n-1})}(\gamma(A))=\gamma(\Rvac(A))$. So, denoting $\ast\coloneqq \#A+\#\Rvac(A)$, we have
\begin{align*}
\ast &=\#\gamma^{-1}(\gamma(A))+\#\gamma^{-1}(\gamma(\Rvac(A)))\\
&=\#\gamma^{-1}(\gamma(A))+\#\gamma^{-1}(\Rvac_{\Phi^+(C_{n-1})}(\gamma(A)))\\
&=\#\gamma(A)+\varepsilon(\gamma(A))+\#\Rvac_{\Phi^+(C_{n-1})}(\gamma(A))+\varepsilon(\Rvac_{\Phi^+(C_{n-1})}(\gamma(A))).
\end{align*} 

We have seen in~\cref{subsec:pan_conj} that rowvacuation serves as Panyushev's~$\mathfrak{P}$ in Type~C, which means that $\#\gamma(A)+\#\Rvac_{\Phi^+(C_{n-1})}(\gamma(A))=n-1$. Hence, in order to prove $\#A+\#\Rvac(A)=n$, we need to show that 
\[\varepsilon(\gamma(A))+\varepsilon(\Rvac_{\Phi^+(C_{n-1})}(\gamma(A)))=1.\] 
In other words, we need to show that $\gamma(A)$ contains an element of $\mathcal{S}_{C_{n-1}}$ if and only if $\Rvac_{\Phi^+(C_{n-1})}(\gamma(A))$ does not contain an element of $\mathcal{S}_{C_{n-1}}$. It suffices to show that $\iota(\gamma(A))$ contains an element of $\mathcal{S}_{A_{2n-3}}$ if and only if $\iota(\Rvac_{\Phi^+(C_{n-1})}(\gamma(A)))$ does not contain an element of $\mathcal{S}_{A_{2n-3}}$. Since 
\[\iota(\Rvac_{\Phi^+(C_{n-1})}(\gamma(A)))=\Rvac_{\Phi^+(A_{2n-3})}(\iota(\gamma(A))),\] 
this follows from~\cref{prop:typeb_dist}.
\end{proof}

Now we consider the case where $\delta(A)\neq A$. Actually, we have two cases here, depending on whether $A$ contains an element of $\{\alpha_{n-1},\alpha_n\}$ or not.

\begin{lemma}\label{lem:delta(A)neqA1}
If $A\in\mathcal A(\Phi^+(D_n))$ is such that $\delta(A)\neq A$ and $\{\alpha_{n-1},\alpha_n\}\cap A =\varnothing$, then $\#A+\#\Rvac(A)=n$. 
\end{lemma}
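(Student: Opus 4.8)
The plan is to reduce the $D_n$ computation to the Type~A rowvacuation formula from \cref{thm:lk} via the ``hat'' construction. Assume $A\in\mathcal A(\Phi^+(D_n))$ with $\delta(A)\neq A$ and $\{\alpha_{n-1},\alpha_n\}\cap A=\varnothing$. First I would relate $\#A$ to $\#\widehat A$. Recall $\widehat A$ is built from $\iota(\gamma(A))$ by deleting the $k$ elements of $\iota(\gamma(A))\cap\mathcal Q$ and inserting $k-1$ new intervals; since $\iota$ doubles exactly the elements of $\mathcal S_{C_{n-1}}$ and $\gamma$ doubles exactly the fibers of size two, one gets a clean bookkeeping identity of the shape $\#\widehat A = 2\#A - (\text{number of symmetric/short contributions}) - (k-1)$, or more usefully an identity expressing $\#\widehat A$ in terms of $\#A$, whether $A$ meets $\mathcal S_{D_n}$, and $k$. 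The hypothesis $\{\alpha_{n-1},\alpha_n\}\cap A=\varnothing$ together with $\delta(A)\neq A$ guarantees (as in the proof of \cref{lem:HatRowCommutes}) that $k=\#(\iota(\gamma(A))\cap\mathcal Q)\geq 2$, so $\widehat A\cap\mathcal Q\neq\varnothing$; I expect this case hypothesis is exactly what makes the bookkeeping uniform.

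Next I would apply \cref{cor:HatCommutes}, which under these very hypotheses gives $\Rvac(\widehat A)=\widehat{\Rvac(A)}$. So the same bookkeeping identity applies to the pair $(\Rvac(A),\widehat{\Rvac(A)})$, expressing $\#\widehat{\Rvac(A)}$ in terms of $\#\Rvac(A)$, whether $\Rvac(A)$ meets $\mathcal S_{D_n}$, and the analogous count $k'$. Adding the two identities and using that $\widehat A$ is an $\eta$-symmetric antichain of $\Phi^+(A_{2n-3})$ with $\Rvac_{\Phi^+(A_{2n-3})}(\widehat A)=\widehat{\Rvac(A)}$, I invoke the Type~A formula: $\#\widehat A + \#\widehat{\Rvac(A)} = 2n-3$. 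This pins down $\#A+\#\Rvac(A)$ up to the ``error terms'' coming from $\mathcal S_{D_n}$-membership and the difference between $k$ and $k'$. The key input to control those error terms is \cref{prop:typeb_dist}: exactly one of $\widehat A$, $\Rvac_{\Phi^+(A_{2n-3})}(\widehat A)$ contains an element of $\mathcal S_{A_{2n-3}}$, which translates (via $\iota,\gamma$) into a statement that exactly one of $A$, $\Rvac(A)$ meets $\mathcal S_{D_n}$ in the relevant way — this is precisely the $\varepsilon$-type cancellation that appeared in \cref{lem:delta(A)=A}.

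Assembling: let $\ast = \#A + \#\Rvac(A)$. Writing the bookkeeping identities carefully, one should find $2\ast = (\#\widehat A + \#\widehat{\Rvac(A)}) + (\text{correction}) = (2n-3) + (\text{correction})$, where the correction is forced to equal $3$ by \cref{prop:typeb_dist} (and the elementary relation between $k$, $k'$, and the $\mathcal S$-memberships). Hence $\ast = n$. The main obstacle I anticipate is getting the cardinality bookkeeping exactly right: tracking how $\#A$, $\#\gamma^{-1}$-fiber sizes, the removal of the $k$ elements of $\mathcal Q$, and the reinsertion of $k-1$ intervals all combine, and making sure the ``correction'' term is genuinely a constant rather than something depending on $A$. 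This is where the hypothesis $\{\alpha_{n-1},\alpha_n\}\cap A=\varnothing$ must be used decisively — without it $\widehat A$ can fail to meet $\mathcal Q$ (as the final remark before \cref{thm:PanyushevD}'s section shows with $A=\{\alpha_n\}$), and both \cref{cor:HatCommutes} and the counting break down. I would handle this by carefully splitting $A$ as $(A\cap\mathcal S_{D_n})\sqcup(A\setminus\mathcal S_{D_n})$ and computing each contribution to $\#\widehat A$ separately, then doing the same for $\Rvac(A)$, and finally summing.
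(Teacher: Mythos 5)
Your outline has the same skeleton as the paper's proof: pass to $\widehat A$, use \cref{cor:HatCommutes} to get $\Rvac_{\Phi^+(A_{2n-3})}(\widehat A)=\widehat{\Rvac(A)}$, invoke the Type~A identity $\#\widehat A+\#\Rvac_{\Phi^+(A_{2n-3})}(\widehat A)=2n-3$ (which, incidentally, needs no $\eta$-symmetry), and kill the correction term with an ``exactly one of the pair meets a distinguished subset'' proposition. But the decisive step---what the correction term actually is, and which proposition controls it---is misidentified, and this is a genuine gap. The correct bookkeeping is $\#\widehat A=2\#A-1-\#(A\cap\mathcal L_{D_n})$: the unfolding $\iota$ doubles exactly the elements \emph{not} on the crease, i.e.\ not in $\mathcal L_{C_{n-1}}$ (membership in $\mathcal S_{C_{n-1}}$ is irrelevant to the doubling), an antichain with $\delta(A)\neq A$ contributes at most one crease element, and the $\mathcal Q$-surgery (delete $k$ elements, insert $k-1$) always changes the cardinality by exactly $-1$, so $k$ never enters; your tentative ``$-(k-1)$'' is spurious. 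Hence the quantity to control is $\#(A\cap\mathcal L_{D_n})+\#(\Rvac(A)\cap\mathcal L_{D_n})$, and the input that makes it equal $1$ is \cref{prop:typec_dist} (the $\mathcal L_{A_{2n-3}}$ statement), combined with a transfer fact you never formulate: $A\cap\mathcal L_{D_n}=\varnothing$ if and only if $\widehat A\cap\mathcal L_{A_{2n-3}}\neq\varnothing$.

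Your plan instead rests on \cref{prop:typeb_dist}, about $\mathcal S_{A_{2n-3}}$; that is the right tool for the $\delta(A)=A$ case (\cref{lem:delta(A)=A}), where the correction really is an $\mathcal S$-membership indicator, but it does not see the correction here: whether $\widehat A$ meets $\mathcal S_{A_{2n-3}}$ does not determine whether $A$ meets $\mathcal L_{D_n}$. For example, in $D_4$ the antichain $A=\{e_2+e_3,\ e_1-e_4\}$ satisfies the hypotheses of the lemma, contains the crease element $e_2+e_3\in\mathcal L_{D_4}$, and has $\widehat A=\{[1,5],[2,6]\}$, which is disjoint from $\mathcal S_{A_5}$; by contrast, the paper's $D_6$ running example also meets $\mathcal L_{D_6}$ while its $\widehat A$ does meet $\mathcal S_{A_9}$. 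So the $\mathcal S$-based cancellation cannot force your ``correction'' to be a constant, and the sum $\#A+\#\Rvac(A)$ is not pinned down by your identities. Once you replace $\mathcal S$ by $\mathcal L$, prove the transfer statement above, and use \cref{prop:typec_dist}, the argument closes exactly as in the paper; the rest of your outline (the role of $\{\alpha_{n-1},\alpha_n\}\cap A=\varnothing$ via \cref{lem:HatRowCommutes} and \cref{cor:HatCommutes}, and the Type~A input) is correctly identified.
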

\begin{proof}
Recall the antichain~$\widehat{A}\in \mathcal{A}(\Phi^+(A_{2n-3}))$ defined in terms of $A$ in~\cref{Sec:ASTBijectionD}. From the definition of~$\widehat{A}$, it is immediate that
\[ \#\widehat{A} = \begin{cases} 2\cdot\#A - 1& \textrm{if $A\cap \mathcal{L}_{D_n}=\varnothing$}; \\
2\cdot\#A - 2 &\textrm{if $A\cap \mathcal{L}_{D_n}\neq \varnothing$}.\end{cases}\]
Given an antichain~$B$ of $\Phi^+(A_{2n-3})$, let $\varepsilon(B)$ be $1$ if $B$ contains an element of $\mathcal{L}_{A_{2n-3}}$ and~$0$ otherwise. It is straightforward to check from the definition of~$\widehat{A}$ that $A\cap \mathcal{L}_{D_n}=\varnothing$ if and only if $\widehat{A}\cap\mathcal{L}_{A_{2n-3}}\neq \varnothing$. Hence, $\#\widehat{A} = 2\cdot\#A - 2 +\varepsilon(\widehat{A})$. Similarly, we have $\#\widehat{\Rvac(A)} = 2\cdot\#\Rvac(A) - 2 +\varepsilon(\widehat{\Rvac(A)})$.

From~\cref{cor:HatCommutes}, we get
\begin{align*}
\#\widehat{A}\!+\!\#\Rvac_{\Phi^+({A_{2n-3}})}(\widehat{A}) &= \#\widehat{A}+\#\widehat{\Rvac(A)} \\
&= 2(\#A\!+\!\#\Rvac(A)) - 4 +\varepsilon(\widehat{A})+\varepsilon(\widehat{\Rvac(A)})\\
&= 2(\#A\!+\!\#\Rvac(A)) - 4 +\varepsilon(\widehat{A})+\varepsilon(\Rvac_{\Phi^+({A_{2n-3}})}(\widehat{A})).
\end{align*}
But~\cref{prop:typec_dist} tells us that $\varepsilon(\widehat{A})+\varepsilon(\Rvac_{\Phi^+({A_{2n-3}})}(\widehat{A}))=1$, so
\[\#\widehat{A}+\#\Rvac_{\Phi^+({A_{2n-3}})}(\widehat{A})=2(\#A+\#\Rvac(A)) - 3.\]
As we saw in~\cref{subsec:pan_conj}, rowvacuation serves as Panyushev's~$\mathfrak{P}$ in Type~A, so $\#\widehat{A}+\#\Rvac_{\Phi^+({A_{2n-3}})}(\widehat{A})=2n-3$. Hence, $\#A+\#\Rvac(A)=n$.
\end{proof}

\begin{lemma}\label{lem:delta(A)neqA2}
If $A\in\mathcal A(\Phi^+(D_n))$ is such that $\delta(A)\neq A$ and $\{\alpha_{n-1},\alpha_n\}\cap A \neq\varnothing$, then $\#A+\#\Rvac(A)=n$. 
\end{lemma}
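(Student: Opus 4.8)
The plan is to reduce this remaining case of \cref{thm:PanyushevD} to the Type~A instance of Panyushev's conjecture, using the parabolic-induction behavior of rowvacuation recorded in \cref{prop:rvac_parabolic}. The key observation is that the hypothesis $\{\alpha_{n-1},\alpha_n\}\cap A\neq\varnothing$ hands us a \emph{simple} root lying in $A$, and that $\alpha_{n-1}$ and $\alpha_n$ are the two leaves at the branched end of the Dynkin diagram of $D_n$, so that deleting either one leaves a path --- a Dynkin diagram of type $A_{n-1}$.

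Concretely, I would proceed as follows. Pick a simple root $\alpha\in\{\alpha_{n-1},\alpha_n\}\cap A$, say $\alpha=\alpha_i$, and set $P'\coloneqq\{q\in\Phi^+(D_n)\colon q\not\geq\alpha\}$. Then $P'$ is the maximal parabolic root poset obtained by removing the node $i$ from the Dynkin diagram of $\Phi(D_n)$; since that node is a leaf at the branched end, the remaining diagram is the path $A_{n-1}$, so $P'\cong\Phi^+(A_{n-1})$ as ranked posets, and in particular $\rk(P')=n-1$. As $\alpha\in A$ is a minimal element of $\Phi^+(D_n)$, the second bullet of \cref{prop:rvac_parabolic} gives $\Rvac(A)=\Rvac_{P'}(A\setminus\{\alpha\})$. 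Now $A\setminus\{\alpha\}$ is an antichain of $P'\cong\Phi^+(A_{n-1})$, so by the Type~A case of Panyushev's conjecture --- established in \cref{subsec:pan_conj} via the explicit formula of \cref{thm:lk} --- we have $\#(A\setminus\{\alpha\})+\#\Rvac_{P'}(A\setminus\{\alpha\})=n-1$. Since $\#(A\setminus\{\alpha\})=\#A-1$ and $\#\Rvac_{P'}(A\setminus\{\alpha\})=\#\Rvac(A)$, adding $1$ to both sides yields $\#A+\#\Rvac(A)=n$.

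I do not expect a serious obstacle here: once the reduction is set up, the statement falls out of results already proved. The only step requiring any care is the identification $P'\cong\Phi^+(A_{n-1})$, which combines the standard fact that a maximal parabolic root subsystem carries the expected root poset with the description of the $D_n$ Dynkin diagram. (It is also worth noting that the hypothesis $\delta(A)\neq A$ is not actually needed in this argument; it merely keeps this case disjoint from the case $\delta(A)=A$ handled earlier, and, as a by-product, forces exactly one of $\alpha_{n-1},\alpha_n$ to lie in $A$ --- though the argument never needs to distinguish which one.)
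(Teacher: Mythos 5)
Your proof is correct and is essentially the paper's own argument: both delete the simple root $\alpha\in\{\alpha_{n-1},\alpha_n\}\cap A$, identify the resulting maximal parabolic root poset with $\Phi^+(A_{n-1})$, apply \cref{prop:rvac_parabolic} to get $\Rvac(A)=\Rvac_{(\Phi')^+}(A\setminus\{\alpha\})$, and conclude from the Type~A case. The paper likewise notes that the hypothesis $\delta(A)\neq A$ is not needed here.
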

\begin{proof}
Assume without loss of generality that $\alpha_n\in A$. Let $\Phi'$ be the maximal parabolic sub-root system of $\Phi^+(D_n)$ obtained by removing $\alpha_n$ from the system of simple roots. Clearly, $\Phi'\simeq A_{n-1}$. From the fact that rowvacuation respects parabolic induction -- i.e., from~\cref{conj:pan}(iv) or, more precisely, \cref{prop:rvac_parabolic} -- we know that $\Rvac(A)=\Rvac_{(\Phi')^+}(A\setminus \{\alpha_n\})$. But since we know that rowvacuation is $\mathfrak{P}$ in Type~A, we know that $\#\Rvac_{(\Phi')^+}(A\setminus \{\alpha_n\})=(n-1)-(\#A-1)=n-\#A$. Hence, $\#A+\#\Rvac(A)=n$. (Note we did not need the hypothesis $\delta(A)\neq A$.)
\end{proof}

Altogether, \cref{lem:delta(A)=A,lem:delta(A)neqA1,lem:delta(A)neqA2} imply~\cref{thm:PanyushevD}. Hence, we have completed the proof of~\cref{thm:main_intro}.

\bibliography{narayana_rowvacuation}{}
\bibliographystyle{abbrv}

\end{document}